\documentclass[11pt]{article}
\usepackage{pdfsync}
\usepackage{amsmath,amsthm,amsfonts,amssymb,mathrsfs,bm,wasysym}
\usepackage{epsfig}
\usepackage{color}
\usepackage{verbatim}
\usepackage{multicol}
\usepackage{comment}
\usepackage{graphicx}
\usepackage{centernot}
\usepackage{tikz}
\usepackage{color}
\usepackage{enumerate}
\usepackage{comment}
\usepackage{thmtools}
\usepackage{float}
\usepackage{caption}
\usepackage{subcaption}
\usepackage{textcomp}
\usepackage[normalem]{ulem}
\usepackage[utf8]{inputenc} 
\usepackage[T1]{fontenc}
\usepackage[color=green, textsize=tiny]{todonotes}
\usepackage{hyperref}
\usepackage{titlesec}
\usepackage{chngcntr}
\usepackage{cleveref}

\parskip \medskipamount
\titlespacing*{\subsubsection}
{0pt}{0.4\baselineskip}{0.2\baselineskip}

\declaretheoremstyle[
qed={//}
]{defstyle}

\newtheorem{theorem}{Theorem}[section]
\declaretheorem[style=defstyle,sibling=theorem]{definition}
\newtheorem{lemma}[theorem]{Lemma}
\newtheorem{proposition}[theorem]{Proposition}
\newtheorem{maintheorem}{Theorem}
\newtheorem{remark}[theorem]{Remark}

\counterwithout{equation}{section}

\newcommand{\pr}[1]{\mathbb{P}\!\left(#1\right)}
\newcommand{\prstart}[2]{\mathbb{P}_{#2}\!\left(#1\right)}
\newcommand{\prcond}[3]{\mathbb{P}_{#3}\!\left(#1\;\middle\vert\;#2\right)}

\newcommand{\E}[1]{\mathbb{E}\!\left[#1\right]}
\newcommand{\estart}[2]{\mathbb{E}_{#2}\!\left[#1\right]}

\newcommand{\vr}[1]{\mathrm{Var}\left(#1\right)}

\newcommand{\1}[1]{{\text{\Large $\mathfrak 1$}}_{#1}}

\newcommand{\Z}{\mathbb{Z}}
\newcommand{\Zpos}{\Z_{\ge1}}
\newcommand{\Znonneg}{\Z_{\ge0}}

\newcommand{\Gn}{G^{(n)}}
\newcommand{\Vn}{V^{(n)}}

\newcommand{\til}[1]{\widetilde{#1}}
\newcommand{\ha}[1]{\widehat{#1}}

\newcommand{\trel}{t_{\mathrm{rel}}}
\newcommand{\trelabs}{\trel^{\mathrm{abs}}}
\newcommand{\tmix}[1]{t_{\mathrm{mix}}\left(#1\right)}

\newcommand{\tmixl}[1]{t_{\mathrm{mix}}^{\mathrm{lazy}}\left(#1\right)}
\newcommand{\hit}[2]{\mathrm{hit}_{#1}\left(#2\right)}
\newcommand{\lhit}[2]{\mathrm{hit}^{\mathrm{lazy}}_{#1}\left(#2\right)}

\newcommand{\tcov}{t_{\mathrm{cov}}}

\newcommand{\taured}{\tau_{\mathrm{red}}}
\newcommand{\taublue}{\tau_{\mathrm{blue}}}
\newcommand{\taugeom}{\tau_{\mathrm{geom}}}

\newcommand{\Kroot}{K\mathrm{-root}}
\newcommand{\VKroot}{V_{\Kroot}}

\newcommand{\eps}{\varepsilon}

\newcommand{\tmixtext}{t_{\mathrm{mix}}}

\newcommand{\h}{\mathfrak{h}}
\newcommand{\V}{\mathfrak{V}}
\newcommand{\F}{\mathcal{F}}

\newcommand{\U}{\mathcal{U}}

\newcommand{\trunc}[2]{\mathrm{Tr}(#1,#2)}

\newcommand{\truncprime}[1]{\mathrm{Tr}'(#1)}

\newcommand{\Geompos}[1]{\mathrm{Geom}_{\ge1}\!\left(#1\right)}
\newcommand{\Geomnonneg}[1]{\mathrm{Geom}_{\ge0}\!\left(#1\right)}
\newcommand{\Bin}[2]{\mathrm{Bin}\!\left(#1,#2\right)}

\newcommand{\ent}[2]{\mathrm{H}_{#1}\left(#2\right)}
\newcommand{\entcond}[3]{\mathrm{H}_{#1}\left(#2\middle|#3\right)}

\newcommand{\dtv}[2]{d_{\mathrm{TV}}\left({#1},{#2}\right)}

\newcommand{\stle}{\le_{\mathrm{st}}}

\begin{document}

\title{Mixing time of the lazy and simple random walk on a randomly twisted hypercube}
\author{Zsuzsanna Baran
\thanks{Paris Dauphine University, Paris, France. {baran@ceremade.dauphine.fr}}
\and An{\dj}ela \v{S}arkovi\'c
\thanks{University of Cambridge, Cambridge, UK. {as2572@cam.ac.uk}}}
\date{}

\maketitle

\begin{abstract}
We consider a randomly twisted hypercube on $N$ vertices and study a simple or lazy random walk on this graph. We show that for both walks the mixing time is of order $\log N$, and the model does not exhibit cutoff. We also establish that both walks have cover time of order $N\log N$, and we show that the chromatic number of the graph diverges.

%The twisted hypercube model features edges corresponding to various random matchings, and inspired by~\cite{LRP} we use some of this randomness to create a framework where already established `quasi-tree techniques' can be used. Combined with some combinatorial ideas this leads to a short and neat proof.
\end{abstract}

\section{Introduction}

In this paper we consider the mixing properties of a (discrete-time) random walk on a twisted hypercube, which is a random generalisation of a Boolean hypercube, given by the following formal definition.% \magenta{I rephrased to avoid two as follows one after another; we can change..}

\begin{definition}\label{def:twisted_hypercube}
We recursively define the random graphs $\Gn$ for $n\ge0$ as follows. Let $G^{(0)}$ be a graph consisting of a single vertex, and for $n\ge1$ let $\Gn$ be obtained by considering two independent copies $G^{(n-1,1)}$ and $G^{(n-1,2)}$ of $G^{(n-1)}$ and adding the edges corresponding to a uniform random perfect matching between their vertices. Then we say that $\Gn$ is a \emph{twisted hypercube} on $2^n$ vertices.

We call the copies of $G^{(k)}$ appearing in the construction of $\Gn$ the \emph{type $k$ subgraphs} of $\Gn$, and we label them $G^{(k,1)}$, ..., $G^{(k,2^{n-k})}$ in such a way that for each $k\in\{1,2,...,n\}$ and $\ell\in\{1,2,...,2^{n-k}\}$ the type $(k-1)$ subgraphs of $G^{(k,\ell)}$ are labelled $G^{(k-1,2\ell-1)}$ and $G^{(k-1,2\ell)}$. We say that the edges running between $G^{(k-1,2\ell-1)}$ and $G^{(k-1,2\ell)}$ are \emph{type $k$ edges}.  (See~\Cref{fig:Gn} for illustration; disregard the colouring for now.)
\end{definition}

Since the 1990s, various modifications of a Boolean hypercube have been considered as models of networks that retain some of the structure of a hypercube, but have smaller diameter and better connectivity properties.

The above random model was first introduced by Dudek et al~\cite{randomly_twisted_hypercubes} in 2018, and they showed that with high probability between any pair of vertices there are at least $n$ internally disjoint paths of length at most $\frac{n}{\log_2n}+O\left(\frac{n}{(\log_2n)^2}\right)$. Among $n$-regular graphs with $2^n$ vertices, this number of paths is as large as possible, and the length is asymptotically optimal as the diameter of such a graph is at least $\frac{n}{\log_2n}$.

In 2023, Benjamini et al~\cite{twisted_hypercubes_structure_randomness} considered a more general random model, where the two copies of $G^{(n-1)}$ in the construction of $G^{(n)}$ do not need to be independent, and established high probability bounds on the diameter and vertex expansion, proved that its eigenvalues satisfy a semicircle law, and showed that it has no non-trivial automorphisms.

The paper~\cite{twisted_hypercubes_structure_randomness} also included a list of remarks and open questions (in Section 3). In particular, they asked about the mixing time of a lazy random walk on a twisted hypercube, noting that by simple estimates it is $O(n\log n)$ and $\Omega(n)$, and wondering whether it is $o(n\log n)$. They also asked about the chromatic number of a twisted hypercube, noting that with high probability it is at least 3.

%\magenta{They write that the mixing time is $\omega(n)$, but this is just a typo, right?}

In this work we study the mixing properties of a randomly twisted hypercube and we establish that both a lazy and a simple random walk have mixing time of order $n$. Using this we also show that the cover time is of order $n2^n$ for both walks.

Although it is not the focus of the paper, we also present a quick argument showing that the chromatic number is $\omega(1)$.

We recall the definitions of mixing time, cutoff, cover time and chromatic number.

\begin{definition}\label{def:tmix_cutoff}
Let $X$ be a Markov chain on a finite state space $V$, with a unique invariant distribution $\pi$. Then for $\eps\in(0,1)$ we define the $\eps$-mixing time of $X$ as
\[
\tmix{\eps}:=\quad\max_{x\in V}\:\inf\left\{t\ge0:\:\dtv{\prstart{X_t=\cdot}{x}}{\pi(\cdot)}\le\eps\right\}.
\]
where $d_{\mathrm{TV}}$ denotes the total variation distance between two distributions, defined as $\dtv{\mu}{\nu}=\frac12\sum_{x}|\mu(x)-\nu(x)|$.

%delete:

We say that a sequence $\left(X^{(n)}\right)$ of Markov chains exhibits cutoff at time $t_n$, with window $s_n$ if $\frac{s_n}{t_n}\to0$ as $n\to\infty$, and for any $\eps\in(0,1)$ there exists a constant $c(\eps)$ such that for all $n$ the $\eps$-mixing time of $X^{(n)}$ satisfies
\begin{equation}\label{eq:cutoff_def}
t_n-c(\eps)s_n\quad\le\quad\tmixtext^{(n)}\left(\eps\right)\quad\le\quad t_n+c(\eps)s_n.
\end{equation}
In case the Markov chains $X^{(n)}$ are random, we say that the sequence exhibits cutoff with high probability (whp) if~\eqref{eq:cutoff_def} holds with a probability $1-o(1)$ as $n\to\infty$.
%\magenta{We say that a random sequence $X^{(n)}$ of Markov chains with high probability does not exhibit cutoff if there exist $c>0$ and $\eps>0$ such that, with high probability $\frac{\tmixn{\eps}}{\tmixn{1-\eps}}\ge1+c$ holds.}
\end{definition}

\begin{definition}\label{def:tcov}
Let $X$ be a Markov chain on a finite state space $V$. Then the cover time of $X$ is defined as
\begin{align*}
\tcov:=\quad\max_{x\in V}\estart{\max_{y\in V}\tau_y}{x},
\end{align*}
where $\tau_y$ denotes the first time the chain visits $y$.
\end{definition}

\begin{definition}\label{def:chromatic_number}
Given a graph $G$, a colouring of its vertices with $k$ colours is a function $c:\:V(G)\to\mathcal{C}$ where $\mathcal{C}$ is a set of size $k$. We say that $c$ is a proper colouring if for any pair $\{x,y\}$ of neighbouring vertices, we have $c(x)\ne c(y)$. The chromatic number $\chi(G)$ of $G$ is the smallest positive integer $k$ such that $G$ has a proper colouring with $k$ colours.
\end{definition}

Our main results are the following.

\begin{maintheorem}\label{thm:main_mixing}
Let $\left(\Gn\right)$ be a sequence of twisted hypercubes as in~\Cref{def:twisted_hypercube} and let $X^{(n)}$ be a simple random walk on $\Gn$. Then with high probability the graphs $\Gn$ have the following property.

For any $\eps\in(0,1)$ there exist positive constants $c(\eps)$ and $C(\eps)$ such that for all sufficiently large $n$ the $\eps$-mixing time of $X^{(n)}$ satisfies
\[c(\eps)n\quad\le\quad\tmixtext^{(n)}\left(\eps\right)\quad\le\quad C(\eps)n.\]
Also, the sequence $\left(X^{(n)}\right)$ does not exhibit cutoff.

Moreover, the same statement also holds for a lazy random walk instead of a simple one.
\end{maintheorem}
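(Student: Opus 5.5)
The plan is to prove a lower bound, an upper bound, and no-cutoff separately, first for the lazy walk and then transferring to the simple walk. A preliminary remark is that $\Gn$ is $n$-regular, so $\pi$ is uniform on $2^n$ vertices.

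\emph{Lower bound.} The walk on a type-$(n-1)$ half can only leave it by using a type-$n$ edge. More generally, the index $k$ of the highest edge type used up to time $t$ controls which type-$k$ subgraph $G^{(k,\cdot)}$ can contain $X_t$. Each step of the simple walk uses an edge of uniform type in $\{1,\dots,n\}$, independently of the past. So by time $t=cn$, with probability bounded below, no edge of type $\ge n/2$ has been used whenever $c$ is small. Concretely, $\pr{\text{no type}\ge n/2\text{ edge in }cn\text{ steps}}=(1/2)^{cn}$, which is too small to be useful. I would instead use types $\ge(1-\delta)n$: the probability of avoiding them is $(1-\delta)^{cn}\approx e^{-c\delta n}$, still too small. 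So this naive approach only gives constant times. A cleaner route is to count: the walk after $t$ steps is supported on at most $n^t$ vertices, which gives only $t\gtrsim n/\log n$. To get order $n$, I would use the coordinate structure. Let $Y_t\in\{0,1\}^n$ record, for each type $k$, the parity of the number of type-$k$ steps used. Then $X_t$ is determined by $X_0$ and the ordered sequence of types, and the parity vector $Y_t$ is itself a lazy/simple walk on the true hypercube $\{0,1\}^n$. Hence the top-level labelling of $X_t$ (which $G^{(k,\ell)}$ it lies in, for each $k$) is a deterministic function of $Y_t$, given the graph and $X_0$. In particular, the index of the type-$k$ subgraph containing $X_t$, read as $n-k$ bits, is exactly the last $n-k$ coordinates of $Y_t$ XOR those of $X_0$. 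This shows that the projection of $X_t$ to its position in the binary tree of subgraphs is a walk on the standard hypercube, and the standard hypercube needs $\frac12 n\log n$ steps to mix. That cannot be right, because it would contradict the claimed upper bound, so I must correct it: the projection of the vertex onto the leaf label of the tree is exactly the vertex itself, and the label changes through matchings, not by XOR. The correct statement is that only the coarse label at level $k$, namely which $G^{(k,\ell)}$ contains $X_t$, behaves like a hypercube walk on the top $n-k$ bits. Taking $k=n-\lfloor\delta n\rfloor$ gives a hypercube of dimension $\delta n$ in which each coordinate is touched at rate $1/n$. Mixing this coarse label requires every one of the $\delta n$ top types to be used, which takes time about $n\log(\delta n)$. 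That is again too large, so this coordinate description is also wrong: crossing a type-$j$ edge does not preserve the higher labels under a nontrivial matching. I will therefore instead prove the lower bound $\tmixtext(\eps)\ge cn$ by the generic diameter/entropy argument. Since the max degree is $n$, the walk's entropy after $t$ steps is at most $t\log n$, giving $t\ge cn/\log n$. To sharpen this to $cn$, I would use the fact that the walk is, with probability close to one, locally self-avoiding among "new" types, combined with the distance lower bound coming from the diameter result of~\cite{twisted_hypercubes_structure_randomness}.

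\emph{Upper bound and no cutoff.} For the upper bound I would show that from any start, after $Cn$ steps, the walk has used each type at least once with probability $1-\eps$ once restricted to the top types, and then couple via the random matchings in the quasi-tree spirit. The key randomness is that the top matching is uniform and the halves are i.i.d., which gives an entropy increment of order $\log 2$ per top-type step at a rate of order $1$ per step after rescaling. No cutoff should follow because the number of steps before the first type-$n$ edge is geometric with mean $n$, so $\tmixtext(\eps)/n$ varies continuously with $\eps$ and the $\eps$- and $(1-\eps)$-mixing times differ by a constant factor. The main obstacle is the upper bound: establishing entropy growth linear in $t/n$ uniformly over starting points, whp over the graph.
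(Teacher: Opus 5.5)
\textbf{There is a genuine gap: the upper bound, which carries the theorem, is not proved, and the lower-bound route you settle on cannot reach order $n$.}

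\emph{Lower bound.} Your first observation is exactly what the lower bound needs: each step uses an edge of uniformly random type, independently of the past, and the walk can leave a type-$k$ subgraph only through an edge of type $>k$. You abandoned it because you tried to avoid $n/2$ or $\delta n$ top types, but a \emph{constant} number $j$ of top types suffices. Let $S$ be the type-$(n-j)$ subgraph containing $x$. Then $\pi(S)=2^{-j}$, while $\prstart{X_t\in S}{x}\ge(1-j/n)^t$. So the distance to stationarity at time $t$ is at least $(1-j/n)^t-2^{-j}$. Taking $2^{-j}<(1-\eps)/2$ and $t\le c(\eps)n$ gives $\tmix{\eps}\ge c(\eps)n$ for every $\eps\in(0,1)$, for every realisation of $\Gn$, and for both walks. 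This is the same bottleneck that gives $\trel\ge n/2$; the paper obtains the lower bound from $\tmix{\eps}\gtrsim\trel$ (resp.\ $\trelabs$) together with $\trel\asymp n$. The route you actually propose cannot work. Counting supports or entropy gives only $t\gtrsim n/\log n$, and the diameter of $\Gn$ is whp $(1+o(1))n/\log_2 n$, so no distance-type argument reaches $cn$.

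\emph{Upper bound.} You say yourself that this is not proved, and the sketch does not point to a working mechanism. Making the walk use every type (or every one of $\Theta(n)$ top types) already costs order $n\log n$ steps by coupon collecting. An entropy heuristic gives neither total-variation closeness to uniform nor uniformity over all $N$ starting points whp over the graph. The paper's ingredients are as follows.
\begin{enumerate}
\item A random time $\tau$ of order $n$. It is dominated by an initial $\mathrm{Geom}(1/n)$ wait, which is what lets it match the first type-$n$ crossing of the simple walk. It then consists of a block of $L\asymp n/\log n$ crossings of edges with types in $[0.9n,n-1]$, one type-$n$ edge, and a second such block. Each crossing contributes entropy $\asymp\log n$, so each block accumulates about $\frac12\log N$.
\item A coupling of the neighbourhoods of typical vertices with truncated quasi-trees. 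On an event of probability bounded below, this makes the walk's law just before the type-$n$ crossing have maximal atom at most $N^{-1/2}/\log N$.
\item Concentration over the uniform type-$n$ matching, with failure probability $\ll N^{-2}$. A union bound over pairs then gives $\prstart{\til{X}_\tau=y}{x}\ge\theta/N$ for all $x,y$ in different halves.
\end{enumerate}
Even this only yields $\hit{\alpha}{\theta}\lesssim n$ for one pair $(\alpha,\theta)$. Upgrading to $\tmix{\eps}\lesssim n$ for all $\eps$ uses the Basu--Hermon--Peres hitting-time characterisation together with $\trel\asymp n$.

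\emph{Simple walk.} For the simple walk you also need $\Gn$ to be far from bipartite. The paper shows that every cut leaves a constant fraction of edges inside the parts, whence $\trelabs\asymp\trel$. Your proposal never addresses this, and it is not automatic: the ordinary hypercube is bipartite and its simple walk does not mix at all.

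\emph{No cutoff.} Your heuristic is sound in spirit, but it also rests on the missing upper bound. Once $\tmix{1/4}\lesssim n$ is known, the bound $\tmix{\eps}\ge(\trelabs-1)\log\frac{1}{2\eps}$ with $\trelabs\ge n/2$ makes $\tmix{\eps}/\tmix{1/4}$ unbounded as $\eps\to0$. That rules out cutoff.
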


\begin{maintheorem}\label{thm:main_cover}
There exist positive constants $c$ and $C$ with the following property. With high probability the twisted hypercube $\Gn$ is such that the cover time of a simple random walk on it satisfies
\[cn2^n\quad\le\quad\tcov^{(n)}\quad\le\quad Cn2^n.\]
Moreover, the same statement holds for a lazy random walk instead of a simple one.
\end{maintheorem}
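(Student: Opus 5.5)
We aim at \Cref{thm:main_cover}, assuming \Cref{thm:main_mixing}: whp the graph satisfies $\tmix{1/4}\le Cn$ for both walks. Set $N=2^n$. The graph is $n$-regular, so $\pi$ is uniform with $\pi(y)=1/N$. We will use Matthews' bounds: the upper bound $\tcov\le \max_{x,y}\estart{\tau_y}{x}\cdot H_N$, and the lower bound $\tcov\ge \min_{x\ne y\in A}\estart{\tau_y}{x}\cdot H_{|A|-1}$ for any set $A$, where $H_m=\sum_{i\le m}1/i\asymp\log m$. Since $\log N\asymp n$, it suffices to show that whp the hitting times satisfy $\estart{\tau_y}{x}\asymp N$: uniformly from above for all $x,y$, and from below for all pairs in some set $A$ with $|A|\ge N^{c}$.

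\textbf{Upper bound on hitting times.} Apply the standard bound via the Green function: $\estart{\tau_y}{\pi}=Z_{yy}/\pi(y)$, where $Z_{yy}=\sum_{t\ge0}\bigl(P^t(y,y)-\pi(y)\bigr)$. By submultiplicativity, $|P^t(y,y)-\pi(y)|\le 2\cdot 2^{-\lfloor t/\tmixtext\rfloor}$ for the lazy walk, which gives $Z_{yy}\le C'\tmixtext\le C''n$. This alone yields only $\estart{\tau_y}{\pi}\le CnN$, which is too weak by a factor of $n$.

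The fix is to use $n$-regularity. The return probabilities decay fast before mixing: whp the graph is locally tree-like, or at least has few short cycles, and by the expansion established in~\cite{twisted_hypercubes_structure_randomness} we get $P^t(y,y)\le (C/n)^{t/2}$ up to scale $t\approx \log n$. For larger $t\le Cn$ we bound $P^t(y,y)\le P^{t_0}(y,y)$-type quantities using spectral monotonicity for the lazy walk. We also use $P^t(y,y)\le 1/n+\pi(y)$ for $t\ge 2$, which holds because the first step leaves $y$ among $n$ choices; more carefully, $P^t(y,y)\le \max_z P^{t-1}(z,y)\le 1/n$ for $t\ge1$. Then $Z_{yy}\le 1+\sum_{1\le t\le Cn}1/n+O(1)=O(1)$. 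Hence $\estart{\tau_y}{\pi}\le CN$. Finally, $\estart{\tau_y}{x}\le \tmixtext\cdot O(1)+ 2\estart{\tau_y}{\pi}\le C N$ by running the walk for a mixing time and then coupling with $\pi$ (a standard argument).

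For the simple walk, either deduce the result from the lazy one (hitting and cover times scale by exactly a factor of $2$ in expectation), or note that bipartiteness is not needed for hitting-time bounds.

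\textbf{Lower bound.} For any $x\ne y$, using $\estart{\tau_y}{\pi}=Z_{yy}N\ge N$, we have $\estart{\tau_y}{x}\ge \estart{\tau_y}{\pi}-\max_z \estart{\tau_z}{\cdot}$-type corrections. The cleaner route is Matthews' lower bound applied to a set $A$ of pairwise far-apart vertices. Take $x,y$ at distance at least $2$. Then $\prstart{\tau_y< t_*}{x}\le \sum_{t\le t_*}P^t(x,y)\le t_*/n$ for $t_*=\delta N$? That estimate is too crude. Instead use the commute-time identity: $\estart{\tau_y}{x}+\estart{\tau_x}{y}=|E|\cdot 2R_{\mathrm{eff}}(x,y)=nN\,R_{\mathrm{eff}}(x,y)\ge nN\cdot\frac{1}{n}=N$, since $R_{\mathrm{eff}}\ge 1/\deg$. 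Combined with near-symmetry, which follows from $\estart{\tau_y}{x}=\estart{\tau_y}{\pi}-\estart{\tau_y}{\pi}'$ corrections of size $O(\tmixtext)=O(n)$, this gives $\estart{\tau_y}{x}\ge N/2-O(n)$ for all $x\ne y$. So we can take $A=V$ and get $\tcov\ge cnN$.

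The \textbf{main obstacle} is the upper bound $Z_{yy}=O(1)$, i.e. controlling the return probabilities $P^t(y,y)$ in the range $1\le t\le Cn$. The plan is to use the crude bound $P^t(y,y)\le 1/n$ for all $t\ge1$, which holds by regularity: $P^t(y,y)=\sum_z P^{t-1}(y,z)P(z,y)\le \max_z P(z,y)\le 1/n$. Summing over $t\le Cn$ gives only $O(C)$. This suffices, together with the exponential decay beyond $\tmixtext$ from \Cref{thm:main_mixing}. For the lazy walk, $\max_z P(z,y)=1/2$, so the crude bound fails there. In that case we write the lazy walk as the simple walk time-changed by $\mathrm{Bin}$ steps, and transfer the bound from the simple walk.
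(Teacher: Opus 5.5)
\paragraph{Upper bound.} The upper bound does not close as written. The step $Z_{yy}\le 1+\sum_{1\le t\le Cn}\frac1n+O(1)$ needs $\sum_{t>Cn}|P^t(y,y)-\pi(y)|=O(1)$. The only input you give for this tail is total-variation decay, $|P^t(y,y)-\pi(y)|\le 2\cdot 2^{-\lfloor t/\tmixtext\rfloor}$. Summing that bound over $t>Cn$ gives order $\tmixtext\,2^{-Cn/\tmixtext}$, which is of order $n$ for every fixed $C$; this is the same computation that gave you $Z_{yy}\lesssim n$. TV distance controls $P^t(y,y)$ only at scale $O(1)$, not at the scale $1/n$ you need. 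If you push the cut to $t\approx n\log n$, the tail becomes small, but the $1/n$ bound then contributes $\log n$, so you only get $\tcov\lesssim nN\log n$. Monotonicity of $P^t(y,y)$ for the lazy chain does not help either, since it just reproduces the bound $\approx 1/n$.

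What is missing is geometric decay at the spectral rate. For the lazy chain all eigenvalues lie in $[0,1]$. With $(f_j)$ an orthonormal eigenbasis of $L^2(\pi)$, this gives $P^{t_0+s}(y,y)-\pi(y)=\pi(y)\sum_{j\ge2}\lambda_j^{t_0+s}f_j(y)^2\le\lambda_2^{s}\bigl(P^{t_0}(y,y)-\pi(y)\bigr)$. Take $t_0=\lceil\log_2 n\rceil$. Since $P_L^{t}(y,y)\le 2^{-t}+\frac1n$, the first $t_0$ terms contribute $O(1)$, and $P^{t_0}(y,y)\le\frac2n$. Hence the whole tail is at most $4\trel/n=O(1)$, using $\trel\asymp n$ from \cite{twisted_hypercubes_structure_randomness}.

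With this repair your route is genuinely different from the paper's, and lighter. The paper bounds $\estart{\tau_x}{\pi}$ via $\estart{\tau_x^+}{x}=N$, an excursion into $G^{(n-1,2)}$, and a separation-distance bound that requires \Cref{thm:main_mixing} for both half-cubes. Your Green-function argument uses only $n$-regularity and $\trel\lesssim n$.

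\paragraph{Lower bound.} The lower bound has a real gap. The commute-time identity only gives $\estart{\tau_y}{x}+\estart{\tau_x}{y}\ge N$. Matthews' lower bound, however, needs $\estart{\tau_y}{x}\ge cN$ for every \emph{ordered} pair in $A$. So everything rests on your near-symmetry claim, and the claimed $O(\tmixtext)$ error is unjustified.

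For a reversible aperiodic chain with uniform $\pi$ (e.g.\ the lazy walk), $\estart{\tau_y}{x}=\estart{\tau_y}{\pi}-Z_{xy}/\pi(y)$ and $Z_{xy}/\pi(y)=Z_{yx}/\pi(x)$. Therefore
\[
\estart{\tau_y}{x}-\estart{\tau_x}{y}=\estart{\tau_y}{\pi}-\estart{\tau_x}{\pi}=N(Z_{yy}-Z_{xx}).
\]
This has nothing to do with $\tmixtext$; it depends on the local cycle structure at $x$ and $y$. The bounds above only give $|Z_{yy}-Z_{xx}|=O(1)$, i.e.\ an asymmetry of order $N$. That is the same order as the commute-time bound itself, so nothing follows.

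The paper avoids this entirely by citing Feige's universal bound $\tcov\ge(1-o(1))N\log N$ for any connected $N$-vertex graph \cite{tcov_lower_bound}. This gives $\tcov\ge c\,n2^n$ at once. The lazy case follows because the lazy walk's cover time is twice that of the simple walk.
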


%The proofs of both of these make use of the following auxiliary result.
%
%\begin{proposition}
%Let $\left(\Gn\right)$ be a sequence of twisted hypercubes and let $X^{(n)}$ be a simple or lazy random walk on $\Gn$. There exist positive constants $c$ and $C$ such that with high probability $\Gn$ has the following property. There exists a random time $\tau\le Cn$ such that for any vertices $x$ and $y$ we have
%\[\prcond{X_{\tau}=y}{\Gn}{x}\quad\ge\quad\frac{c}{2^n}\:.\]
%\end{proposition}
%
%Besides these, we also prove the following estimate on the chromatic number.

\begin{maintheorem}\label{thm:main_chromatic}
For any positive constant $c$, with high probability the chromatic number of $G^{(n)}$ satisfies $\chi\left(G^{(n)}\right)>c$.
\end{maintheorem}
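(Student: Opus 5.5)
We plan to prove the stronger statement that the independence ratio tends to zero: for every $\delta>0$, with high probability every set $I\subseteq V(\Gn)$ with $|I|\ge\delta 2^n$ spans an edge. Since every colour class of a proper colouring is independent, this gives $\chi(\Gn)\ge 1/\delta$. Taking $\delta<1/c$ then proves \Cref{thm:main_chromatic}.

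\textbf{Reduction to a few random top levels.} Fix a large constant $K$ and condition on the type $(n-K)$ subgraphs $G^{(n-K,1)},\dots,G^{(n-K,2^K)}$. What remains random is the $K$ families of matchings of types $n-K+1,\dots,n$, and these are independent of the conditioned structure. Let $I$ be an independent set of density $\delta$. For each block $j$, let $\delta_j$ be the density of $I$ in $G^{(n-K,j)}$. The average of the $\delta_j$ is $\delta$, so at least $\delta 2^{K-1}$ blocks have $\delta_j\ge\delta/2$. Call them heavy. By the recursive structure, the type $n-K+1,\dots,n$ edges form a $K$-dimensional hypercube of ``block adjacencies'', and each block adjacency is a uniform matching between the two blocks. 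A $K$-dimensional hypercube has independence number $2^{K-1}$ and is very well connected. For $K$ large in terms of $\delta$, the heavy blocks therefore contain $\Omega(\delta^2 K 2^K)$ hypercube-adjacent pairs; this follows from the edge-isoperimetric inequality, since any set of density at least $\delta/2$ in $\{0,1\}^K$ has many internal edges once $K\gg\log(1/\delta)$. For each such adjacent heavy pair $(j,j')$, the corresponding uniform matching must avoid $I_j\times I_{j'}$. For fixed sets this happens with probability at most $\exp(-c\delta^2 2^{n-K})$. The different adjacencies use independent matchings, so for a fixed $I$ the failure probability is at most $\exp(-c\delta^4 K 2^{n})$.

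\textbf{Union bound via containers (main obstacle).} We cannot afford a union bound over all $I$; there are up to $2^{2^n}$ of them. We will instead take a union bound over fewer objects. For each block we choose a \emph{container}, meaning a set $C_j\supseteq I_j$ depending only on the conditioned graph $G^{(n-K,j)}$. The matching then has to avoid $C_j\times C_{j'}$ only when these containers are still dense. Each $G^{(n-K,j)}$ is $(n-K)$-regular. By the graph container lemma (Sapozhenko / Kleitman--Winston style), the independent sets of an $(n-K)$-regular graph on $M$ vertices can be covered by $2^{O(M\log(n)/n)}$ containers, each of size at most $(1/2+o(1))M$. In total this gives $2^{O(2^n\log n/n)}=2^{o(2^n)}$ choices.

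The step we expect to be hardest is arranging that containers do not lose the density information. A container of density about $1/2$ may contain a sparse $I_j$, and then the bound in terms of $\delta$ weakens. The plan is to run the argument only on pairs where $|C_j\cap I|\ge(\delta/2)|C_j|$ after refining. Alternatively, we will replace $\delta$ by the container density: since $|C_j|\le(1/2+o(1))2^{n-K}$, heavy pairs still force $\exp(-c\delta^2 2^{n-K})$ per adjacency, whereas the count of container choices is only $2^{o(2^n)}$. Comparing $2^{o(2^n)}$ with $\exp(-c\delta^4K2^n)$, the union bound succeeds for every fixed $\delta$ and $K=K(\delta)$ large. This yields $\alpha(\Gn)\le\delta 2^n$ with high probability.

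If the container counting turns out too delicate, the fallback is spectral. We would use Hoffman's bound $\alpha\le N\frac{-\lambda_{\min}}{n-\lambda_{\min}}$, so it suffices to show $\lambda_{\min}(\Gn)\ge -o(n)$ with high probability. We would attempt to show this by a trace-moment computation over the random matchings, of moment order about $\log N/\log n$, extending the semicircle-law estimates of~\cite{twisted_hypercubes_structure_randomness}.
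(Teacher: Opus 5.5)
\textbf{There is a genuine gap, and it sits in the step you already flag as hardest.} Your estimate $\exp(-c\delta^2 2^{n-K})$ holds for a \emph{fixed} pair $I_j,I_{j'}$, and it cannot be transferred to a pair of containers. Independence of $I$ only says that the matching avoids $I_j\times I_{j'}$. That does not imply it avoids $C_j\times C_{j'}$, because avoiding $S\times T$ is decreasing in $S,T$. The event you would actually have to bound is ``some dense $I_j\subseteq C_j$, $I_{j'}\subseteq C_{j'}$ span no matching edge''. Controlling it needs a further union bound over subsets of the containers, i.e.\ over $2^{\Theta(2^{n-K})}$ choices per block. That is exactly what the containers were supposed to avoid, so the container step gives nothing.

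\textbf{The reduction step also rests on false claims.}
\begin{itemize}
\item For $k\ge n-K+2$ the type $k$ edges of $G^{(k,\ell)}$ form \emph{one} uniform matching between $G^{(k-1,2\ell-1)}$ and $G^{(k-1,2\ell)}$. Each of these is a union of $2^{k-1-(n-K)}$ blocks. So the block graph is not a $K$-cube, and the edge sets between pairs of blocks are not independent uniform matchings.
\item Dense subsets of $\{0,1\}^K$ need not span any edges: the even-weight vertices have density $\frac12$ and span none. So edge-isoperimetry does not give $\Omega(\delta^2K2^K)$ adjacent heavy pairs.
\item The spectral fallback is only a plan. The semicircle law controls the bulk of the spectrum, not $\lambda_{\min}$.
\end{itemize}

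\textbf{Containers are unnecessary: a direct first-moment argument closes if you use the independent matchings $\sigma_{k,\ell}$ of the construction.}
\begin{itemize}
\item For a fixed $I$ put $a_{k,\ell}=|I\cap V(G^{(k-1,2\ell-1)})|$ and $b_{k,\ell}=|I\cap V(G^{(k-1,2\ell)})|$. The image under $\sigma_{k,\ell}$ of the first of these sets is a uniform $a_{k,\ell}$-subset of a set of size $2^{k-1}$. Hence $\pr{I\text{ independent}}\le\exp\bigl(-\sum_{k,\ell}a_{k,\ell}b_{k,\ell}2^{1-k}\bigr)$.
\item Write $2ab=(a+b)^2-a^2-b^2$ and $Q_j=2^{-j}\sum_\ell|I\cap V(G^{(j,\ell)})|^2$. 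The exponent then telescopes to $Q_n+\frac12\sum_{k=1}^{n-1}Q_k-\frac12Q_0$.
\item By Cauchy--Schwarz ($Q_k\ge|I|^2/N$, $Q_0=|I|$), this is at least $\frac{n+1}{2}\frac{|I|^2}{N}-\frac{|I|}{2}$.
\item A plain union bound over the $\binom{N}{\delta N}\le e^{\delta N\log(e/\delta)}$ sets of size $\delta N$ then gives $\alpha(\Gn)<\delta N$ whp. This holds for every fixed $\delta$, and even for $\delta=3\log n/n$.
\end{itemize}
Even your own claimed bound $e^{-c\delta^4K2^n}$, had it been justified, would beat the trivial count $2^{2^n}$ once $K>\log 2/(c\delta^4)$. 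So the union bound was never the real obstruction.

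\textbf{Comparison with the paper.} Once repaired this way, your route is genuinely different from the paper's. The paper inductively builds graphs $H_c$ with $\chi(H_c)>c$, shows a fixed $G^{(m)}$ contains $H_c$ with positive probability, and then uses the many independent copies of $G^{(m)}$ inside $\Gn$. The first-moment route is shorter and gives $\chi(\Gn)\gtrsim n/\log n$ whp. The paper's argument yields only an extremely slowly growing bound.
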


\subsection{Notation}

We let $N=2^n$, and we write $\U$ to denote the uniform distribution on the vertices of $\Gn$.

We write SRW to mean a simple random walk.

For functions $a,b:\Znonneg\to[0,\infty)$ we write $a(n)\lesssim b(n)$ if there exists a constant $C>0$ such that for all sufficiently large $n$ we have $a(n)\le Cb(n)$, and write $a(n)\ll b(n)$ if for any constant $c>0$, for all sufficiently large $n$ (in terms of $c$) we have $a(n)\le cb(n)$. We define $\gtrsim$ and $\gg$ analogously. We write $a(n)\asymp b(n)$ if we have $a(n)\lesssim b(n)\lesssim a(n)$. Unless specified otherwise, these relations are considered as $n\to\infty$.

We often consider quantities like $0.9n$ or $C\log n$ and treat them as integers. In these cases one can take $\lfloor\cdot\rfloor$ or $\lceil\cdot\rceil$, but we omit this from the notation.

\subsection{Overview}

In this section we give an overview of the proof ideas of \Cref{thm:main_mixing}, which is the most involved one of our three results.
Instead of trying to estimate the mixing times of a walk directly, we bound the hit times of the walk and then compare the mixing times to these. The hit times are defined as follows.

\begin{definition}
Let $X$ be a Markov chain on a finite state space $V$ with a unique invariant distribution $\pi$. For each $\alpha,\theta\in(0,1)$ we define the corresponding \emph{hit time} of $X$ as
\[
\hit{\alpha}{\theta}:=\quad\inf\{t:\:\forall x,\:\forall A\subseteq V\text{ with }\pi(A)\ge\alpha\text{ we have }\prstart{\tau_A>t}{x}\le\theta\},
\]
where $\tau_A=\inf\left\{t:\:X_t\in A\right\}$.
\end{definition}

We will prove the following two results.

\begin{proposition}\label{hit_upper_bound}
With high probability there exist $\alpha,\theta\in(0,1)$ such that the corresponding hit time of a simple random walk on $\Gn$ satisfies $\hit{\alpha}{\theta}\lesssim n$.
\end{proposition}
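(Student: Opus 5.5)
We aim to show that from any starting vertex $x$, and for any set $A$ with $|A|\ge\alpha N$, the SRW hits $A$ within $Cn$ steps with probability at least $1-\theta$. The plan is to produce a random time $\tau\le Cn$ at which the walk started from any $x$ is roughly spread out. Concretely, we would like a lower bound of the form
\[
\prstart{X_\tau=y}{x}\ \ge\ \frac{c}{N}
\]
for all but a small fraction of vertices $y$, or more weakly that $\prstart{X_\tau\in A}{x}\ge c'$ whenever $\pi(A)\ge\alpha$. Since $\Gn$ is $n$-regular, $\pi=\U$. Given such a bound, one application gives $\prstart{\tau_A\le Cn}{x}\ge c'$. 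So $\hit{\alpha}{1-c'}\lesssim n$, which is the statement with $\theta=1-c'$. No iteration is needed, since $\theta$ may be any constant in $(0,1)$.

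To build $\tau$, we exploit the recursive structure, in the spirit of quasi-tree arguments. The walk from $x$ uses a type $k$ edge with probability $1/n$ at each step. Crossing a type $k$ edge moves it from $G^{(k-1,2\ell-1)}$ to $G^{(k-1,2\ell)}$ (or back) through a uniform random matching, independent of everything inside the two halves. The key steps are as follows.

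\begin{enumerate}
\item Run the walk for $Cn$ steps. By a coupon-collector-type count, with probability bounded below the walk uses edges of all types from $\{\lfloor \delta n\rfloor,\dots,n\}$; in fact, in time $Cn$ it uses each fixed type $k$ about $C$ times. The effect of the types $k\ge \delta n$ is to choose, nearly uniformly, which type $\lfloor\delta n\rfloor$ subgraph the walk ends in. There are $2^{(1-\delta)n}$ such subgraphs.
\item Use the randomness of the matchings, exposed along the walk's path, to argue that the endpoint within the subgraph is also spread out. We treat the walk's trajectory as exploring a locally tree-like structure. Since $N=2^n$ vertices and only polynomially many paths of length $O(\log N)$ are relevant, the matchings make the graph locally tree-like. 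Conditioned on the sequence of edge types used, the landing vertex after crossing a high-type edge is uniform over the other half, given the matching revealed so far.
\item Conclude with a second-moment or union bound over $x$ and $A$. For fixed $x$, the number of vertices $y$ reached with probability at least $c/N$ is at least $(1-\alpha/2)N$ with probability $1-o(2^{-n})$ over the graph. A union bound over the $N$ starting points then gives a whp statement. Sets $A$ need no union bound: any $A$ with $|A|\ge\alpha N$ meets the well-covered set in at least $\alpha N/2$ vertices.
\end{enumerate}

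The main obstacle is step 2, together with the concentration needed for the union bound in step 3. The danger is that the walk revisits the same edges, so the "fresh uniform matching" property fails. To handle this, we first show that whp every ball of radius $r\ll n/\log n$ is nearly tree-like, with at most a bounded number of cycles. Then we use a stopping-rule argument: only count the last crossing of each type $k\in[\delta n,n]$, and apply a Doob-martingale/Azuma bound over the matching choices to obtain exponential concentration. If this direct exploration proves too messy, the fallback is a conductance route. Whp $\Gn$ has vertex expansion bounded below, as in~\cite{twisted_hypercubes_structure_randomness}. Combining this with an escape-from-small-sets argument (the walk leaves any set of size $\le \alpha N$ at a linear rate per $n$ steps) would directly yield $\hit{\alpha}{\theta}\lesssim n$.
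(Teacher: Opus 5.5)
\textbf{There is a genuine gap.} Your outer reduction is sound and matches the paper's. You produce a time $\tau$ with $\tau\le Cn$ with probability close to $1$ and $\prstart{X_\tau=y}{x}\ge c/N$ for a linear-size set of $y$, then take $\alpha$ close to $1$. (The paper gets the lower bound for all $y$ in the type $(n-1)$ subgraph not containing $x$; see \Cref{transition_probs}.) The problem is in how you produce the spreading.

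Step~1 is false. Every vertex has exactly one edge of each type, so the SRW uses an i.i.d.\ uniform type at each step. In $Cn$ steps about $e^{-C}(1-\delta)n$ of the types in $[\delta n,n]$ are never used, and the probability that all of them are used is exponentially small. Collecting all types takes order $n\log n$ steps, which is exactly the trivial regime the proposition is meant to beat. On the ordinary hypercube, where the ``which subgraph'' coordinates behave as you describe, hit times of large sets really are of order $n\log n$.

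Step~2 is also false as stated: $\Gn$ is not locally tree-like.
\begin{itemize}
\item $G^{(2)}$ is always a $4$-cycle.
\item For $k\le\log_2 n$, each type $k$ subgraph is a connected $k$-regular graph on at most $n$ vertices with diameter at most $k$. So already a ball of radius $\log_2 n\ll n/\log n$ contains of order $n\log n$ independent cycles.
\item There are $n^{\Theta(n)}$ paths of length $\Theta(\log N)$, not polynomially many.
\end{itemize}
Step~3 then asserts a quenched bound with failure probability $o(2^{-n})$ without any mechanism. The ``last crossing of each type'' device again presupposes that every type is crossed.

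What is missing is the right source of randomness and a way to make it quenched. In the paper only the types $0.9n,\dots,n-1$ (red) are treated as tree-like. The lower (green) types form non-tree-like $R$-balls, so the green--red neighbourhood is compared with a quasi-tree, not a tree. Each red crossing selects one of $\asymp n$ types and adds $\asymp\log n$ entropy (\Cref{entropy_concentration}). Hence after only $L\asymp n/\log n$ red crossings, i.e.\ $O(n/\log n)$ steps, the walk's law on a truncated exploration of size $N^{1/2+o(1)}$ has atoms at most $1/(\sqrt N\log N)$.

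Your annealed remark, that a fresh high-type crossing lands (nearly) uniformly, is correct. But it cannot give a bound valid for all starting points at once. The paper gets this by meeting in the middle:
\begin{itemize}
\item It runs the argument above from a $K$-root reached from $x$, and, by reversibility, from one reached from $y$.
\item It joins the two sides through the type-$n$ matching, which is independent of both explorations.
\item A concentration bound for $\sum_i w_{i,\eta(i)}$ over that matching gives mass $\ge\theta'/N$ with failure probability $\ll N^{-2}$ (\Cref{prob_between_Kroots}). This is enough for a union bound over all pairs.
\end{itemize}
The order $n$ in $\tau$ comes only from the geometric wait for a type-$n$ crossing.

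Your fallback cannot replace this. Vertex expansion bounded below gives edge conductance only of order $1/n$. For example, replace each vertex of a cubic expander on about $2^n/n$ vertices by a copy of $K_{n-2}$, and join neighbouring copies by perfect matchings. This gives an $n$-regular graph with constant vertex expansion. Its walk projects to a walk on the expander that moves only at rate $3/n$, so its hit times of large sets are of order $n^2$.
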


\begin{proposition}\label{trelabs_order}
With high probability the absolute relaxation time of a simple random walk on $\Gn$ satisfies $\trelabs\asymp n$.
\end{proposition}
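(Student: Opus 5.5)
We prove the two bounds $\trelabs\gtrsim n$ and $\trelabs\lesssim n$ separately, recalling that $\trelabs=1/(1-\lambda_*)$, where $\lambda_*=\max\{|\lambda|:\lambda\ne1\text{ an eigenvalue of }P\}$ and $P=\frac1n A$ is the transition matrix of SRW on the $n$-regular graph $\Gn$.

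\textbf{Lower bound.} We exhibit an explicit test function. Let $f=+1$ on $G^{(n-1,1)}$ and $f=-1$ on $G^{(n-1,2)}$. Then $\sum_x f(x)=0$, and $f$ is orthogonal to constants under the uniform measure $\U$. Each vertex has exactly one neighbour across the top-level matching (its type $n$ edge), and its other $n-1$ neighbours lie in its own half. Hence $Pf=\frac{n-2}{n}f$ exactly, so $1-\frac2n$ is an eigenvalue, and deterministically $\trelabs\ge\frac n2$.

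\textbf{Upper bound.} We need to show that whp every eigenvalue of $P$ other than $1$ has absolute value at most $1-c/n$. For the spectral gap we use the recursive structure and the Dirichlet form. The plan is to show that the Poincaré constant of $\Gn$ is at most a constant times $n$, via a canonical-paths or induction argument. It is simpler to use Cheeger's inequality only if the expansion is of order $1$, and we have it: Benjamini et al. showed vertex expansion bounds, but Cheeger gives only $1-\lambda_2\gtrsim h^2/n^2$, which is too weak. Instead we induct on $k$. Write $\mathrm{gap}_k$ for the gap of the unnormalised Laplacian $L_k=nI-A$ restricted to type $k$ edges. Decompose $f=g+h$, where $g$ is the part constant on each half $G^{(k-1,i)}$ and $h$ has mean zero on each half. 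The Dirichlet form satisfies $\mathcal E_k(f)\ge \mathcal E_{k-1}^{(1)}(f)+\mathcal E_{k-1}^{(2)}(f)+\sum_{\text{matching}}(f(x)-f(y))^2$. The within-half terms bound $h$ via induction. A random perfect matching between two sets is whp an expander, in the sense that $\sum_{\text{matching}}(f(x)-f(y))^2$ controls the difference of the means. This step needs a bound of the form $\ge c\cdot\|g\|^2$ without losing too much of $h$. We aim for $\mathrm{gap}_k\ge c$ uniformly in $k$ (for the unnormalised Laplacian), which gives $1-\lambda_2\ge c/n$. Concretely, we can instead lower-bound $\mathrm{gap}_n$ by comparing with the ordinary hypercube: each vertex lies in a unique chain of nested subgraphs, and the type $k$ edges form a perfect matching for each $k$. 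The hypercube's gap of $2$ is proved by tensorisation, and the analogue here follows from a martingale-style decomposition $f=\sum_k(\mathbb E[f\mid\mathcal F_{k-1}]-\mathbb E[f\mid\mathcal F_k])$, where $\mathcal F_k$ is the $\sigma$-algebra generated by the type $k$ subgraphs. The main obstacle is that, unlike the hypercube, the matchings do not commute. The martingale increments are therefore not exactly eigenvectors, and the cross terms must be controlled. This requires whp estimates on random matchings; for example, we would use the expansion of a random matching plus the level-$(k-1)$ structure, allowing a loss of only a constant factor overall rather than per level.

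\textbf{Smallest eigenvalue.} We must also rule out $\lambda_{\min}$ close to $-1$. The graph $\Gn$ contains odd cycles whp; Benjamini et al. note $\chi\ge3$. We need $1+\lambda_{\min}\gtrsim1/n$, which is equivalent to showing that the signless Laplacian $nI+A$ has smallest eigenvalue bounded below by a constant. We run the same inductive or decomposition argument on $nI+A$. The top-level $\pm$ function $f$ from the lower bound is harmless, since its eigenvalue is $1-\frac2n$. Bipartite obstructions are destroyed at each level $k$ with constant probability by a random matching that creates odd cycles within each bipartition of the two halves. We expect this ``non-bipartiteness at every scale'' quantification to be the hardest part. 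If it proves too delicate, we will present the result for the lazy walk, where $\trelabs=\trel$ and this step is unnecessary, and treat the SRW via the hit-time comparison.
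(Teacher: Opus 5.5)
\textbf{The lower bound and the spectral gap.} Your lower bound is correct. The $\pm1$ function on the two type $(n-1)$ subgraphs is an exact eigenfunction with eigenvalue $1-\frac2n$, so $\trelabs\ge\trel\ge\frac n2$ for every realisation.

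For the $\trel$ half of the upper bound you are making things harder than they are. The decomposition you start with already closes, deterministically and without loss. Let $\mu_1,\mu_2$ be the means of $f$ on the two halves (each of size $M$) and $\sigma$ the top matching. Cauchy--Schwarz gives $\sum_{x}(f(x)-f(\sigma(x)))^2\ge M(\mu_1-\mu_2)^2$. Meanwhile $\sum(f-\bar f)^2$ splits orthogonally into the two within-half variances plus $\frac M2(\mu_1-\mu_2)^2$. Hence the Laplacian gaps satisfy $\gamma_k\ge\min(\gamma_{k-1},2)$ with $\gamma_1=2$. So $\trel=\frac n2$ exactly, for any choice of matchings. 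There are no cross terms to control and no randomness is needed; the paper simply cites Benjamini et al.\ for $\trel\asymp n$.

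\textbf{The gap: the smallest eigenvalue.} The genuine gap is the other half, $1+\lambda_{\min}\gtrsim\frac1n$. This is the entire content of the proposition beyond $\trel\asymp n$, and you do not prove it.

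Running ``the same induction'' on $nI+A$ gives only $q_k\ge q_{k-1}$ for the bottom eigenvalue $q_k$ of the signless Laplacian. But $q_2=0$, because $G^{(2)}$ is always a $4$-cycle. So the recursion yields nothing unless you quantify how much frustration the random matchings create \emph{simultaneously for all} near-bipartitions, which is exactly the step you leave open. Knowing that odd cycles appear is purely qualitative: a bipartite graph plus one edge has $\lambda_{\min}$ as close to $-1$ as you like.

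Your fallback does not rescue the statement. The lazy walk is a different chain. Hit times are blind to bipartiteness: on a bipartite expander they are small while $\trelabs=\infty$. This is precisely why, in the paper, the passage from hit times to the SRW mixing time takes $\trelabs$ as a separate input, so using it here would be circular.

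\textbf{The missing idea.} No scale-by-scale spectral control is needed. It suffices to prove the global combinatorial property that whp every partition $(A,B)$ of the vertices has $|E(A)|+|E(B)|\ge c'|E|$, and to combine it with the gap. The paper quotes a general result for this combination. The mechanism is as follows. If $f$ is an eigenfunction with $1+\lambda_{\min}=\delta/n$, then $\sum_{x\sim y}(|f(x)|-|f(y)|)^2\le\sum_{x\sim y}(f(x)+f(y))^2=\delta\sum_xf(x)^2$. By the gap, $|f|$ is then close to a constant. So in the partition $\{f>0\},\{f\le0\}$ all but an $O(\delta)$ fraction of edges are bichromatic, which forces $\delta\gtrsim c'$.

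The property itself is proved inside each $G^{(m,i)}$ with $m\ge\frac n2+5$, using the edges of types $m-2,m-1,m$. These form eight blocks of size $2^{m-3}$ joined by three independent uniform matchings. For such a gadget, every bipartition leaves a constant fraction of monochromatic edges except with probability $2^{-2^{m-3}/4}$. The union bound over bipartitions is affordable because the relevant events depend only on the restriction of the partition to the first two blocks. This works through the induced uniform matchings on $V_1\cup V_2$ and on $V_1\cup\dots\cup V_4$, together with a large-deviation bound for random matchings. A final union bound over all $i$ and $m$ then covers a constant fraction of the edges of $\Gn$.
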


Once we have these, the proof of \Cref{thm:main_mixing} can be concluded quickly as follows.

\begin{proof}[Proof of \Cref{thm:main_mixing}]
From~\Cref{trelabs_order} we know that $\hit{\alpha}{\theta}\lesssim n$ for some $\alpha,\theta\in(0,1)$ and from~\cite[Proposition 1.5]{twisted_hypercubes_structure_randomness} we know that $\trel\asymp n$. Then \cite[Proposition 3.3 and Corollary 3.4]{characterisation_of_cutoff} allows us to bound the hit time with all parameters, and we get that $\hit{\beta}{\varphi}\lesssim n$ for all $\beta,\varphi\in(0,1)$.

Using~\cite[Lemma A.3]{weighted_random_matching} we can compare the hit times of a lazy random walk and a SRW, and get that $\lhit{\beta}{\varphi}\lesssim n$ for all $\beta,\varphi\in(0,1)$.

Then using~\cite[Proposition 1.8]{characterisation_of_cutoff}, the monotonicity of mixing time in the parameter, and that $\tmixl{\eps}\gtrsim\trel$ for all $\eps\in(0,1)$ (by~\cite[Theorem 12.4]{MTMC_book} for the lazy walk) we get that mixing time of the lazy walk satisfies $\tmixl{\eps}\asymp n\asymp\trel$ for all $\eps\in(0,1)$, hence by~\cite[Theorem 3]{characterisation_of_cutoff} the lazy walk does not exhibit cutoff.

Since the lazy walk does not exhibit cutoff, neither does the simple random walk. (See~\cite[Lemma 6.9]{weighted_random_matching}.)

Using the above bound on the hit times, \cite[Remark 1.9]{characterisation_of_cutoff}, and that $\tmix{\eps}\gtrsim\trelabs$ for all $\eps\in(0,1)$ (see~\cite[Theorem 12.4]{MTMC_book}), we also get that $\tmix{\eps}\asymp n$ for all $\eps\in(0,1)$.
\end{proof}

To prove~\Cref{hit_upper_bound}, we define a random time $\tau$ and show that with positive probability it satisfies $\tau\lesssim n$, and $X_{\tau}$ is comparable to the uniform distribution.

Inspired by~\cite{LRP}, we colour the `short', `medium length' and `long' edges of $\Gn$ green, red, and blue respectively. We consider a random time $\tau$ so that the first $\tau$ steps of a random walk $X$ on $\Gn$ roughly speaking correspond to taking a geometric number of steps as a random walk on the green and red edges, then walking on the green and red edges until $K+L$ further red edges are crossed (where $K$ and $L$ are well-chosen parameters, analogous to the parameters with these labels used in \cite{LRP} and other works with similar approach), then crossing a blue edge, then walking on the green and red edges again until $L+K$ more red edges are crossed.

If after the initial geometric number of steps the walk is at a given vertex $q$, we approximate the probability of $X_{\tau}=y$ as
\begin{align}\label{eq:sum_overview}
\sum_{u,v,w,z}\prcond{Y_{\taured^K}=u}{\Gn}{q}&\prcond{Y_{\taured^L-1}=w}{\Gn}{u}\\
\nonumber
&\cdot\prcond{Y_{\taured^K}=v}{\Gn}{y}\prcond{Y_{\taured^L-1}=z}{\Gn}{v}\1{\eta(w)=z},
\end{align}
where $Y$ is a simple random walk on the green and red edges of $\Gn$, $\taured^i$ denotes the $i$th time that a red edge is crossed, and $\eta(w)$ is a uniformly chosen blue neighbour of $w$.

To approximate the neighbourhoods of $u$ and $v$ formed by green and red edges of $\Gn$, we consider random `quasi-trees' $T_u$ and $T_v$ that consist of `balls'  that are distributed like balls of a given radius $R$ in the green distance in a random copy of $\Gn$, and `long-range edges' between them that correspond to the red edges of $\Gn$ and form a tree-like structure.

Using the iid structure of a quasi-tree $T$, and the fact that due to the high degrees of vertices, a random walk $\til{Y}$ on it is unlikely to return to a ball it already left, we can get a good understanding of the behaviour $\til{Y}$. We then prove that for a sufficiently large proportion of pairs $(u,v)$, their neighbourhood in the green and red edges of $\Gn$, and a simple random walk on it is well approximated by a random quasi-tree and simple random walk on it. This allows us to get a good control over the probabilities $\prcond{Y_{\taured^L-1}=w}{\Gn}{u}$ and $\prcond{Y_{\taured^L-1}=z}{\Gn}{v}$. After this we use the randomness of the blue edges and a concentration result similar to~\cite[Lemma 5.1]{cutoff_NBRW_on_sparse_random_graphs} to show that the sum~\eqref{eq:sum_overview} is $\gtrsim\frac{1}{N}$.

To prove \Cref{trelabs_order}, we use a result that roughly speaking states that if a graph is `far from being bipartite', then its relaxation time and absolute relaxation time are of the same order. We prove that whp $\Gn$ satisfies the above property, and use that $\trel\asymp n$ from~\cite[Proposition 1.5]{twisted_hypercubes_structure_randomness}.

\subsection{Organisation}

In Sections \ref{sec:tau} to \ref{sec:hit} we work towards the proof of \Cref{hit_upper_bound}. In \Cref{sec:tau} we introduce the colouring of the edges of $\Gn$ and the exact definition of the random time $\tau$. In \Cref{sec:quasi-tree} we give the definition of a quasi-tree and explain how to couple $\Gn$ to it. In \Cref{sec:success} we estimate the success probability of this coupling. In \Cref{sec:hit} we conclude the proof of \Cref{hit_upper_bound}.

In \Cref{sec:trelabs} we prove \Cref{trelabs_order}, hence concluding the proof of \Cref{thm:main_mixing}. In \Cref{sec:cover} we prove \Cref{thm:main_cover}, and finally in \Cref{sec:chromatic} we prove \Cref{thm:main_chromatic}.

%In \Cref{sec:general_case} we briefly comment on how our results could be generalised to other types of twisted hypercubes, in particular for the so-called duplicube.

\section{The random time $\tau$}\label{sec:tau}

We colour the edges of $\Gn$ with three different colours as follows. We colour the type $n$ edges of $\Gn$ blue, the type $k$ edges for $k\in\{0.9n,...,n-1\}$ red, and all remaining edges green. (See~\Cref{fig:Gn} for an illustration.)

\begin{figure}[h]
\centering
\includegraphics[width=120mm]{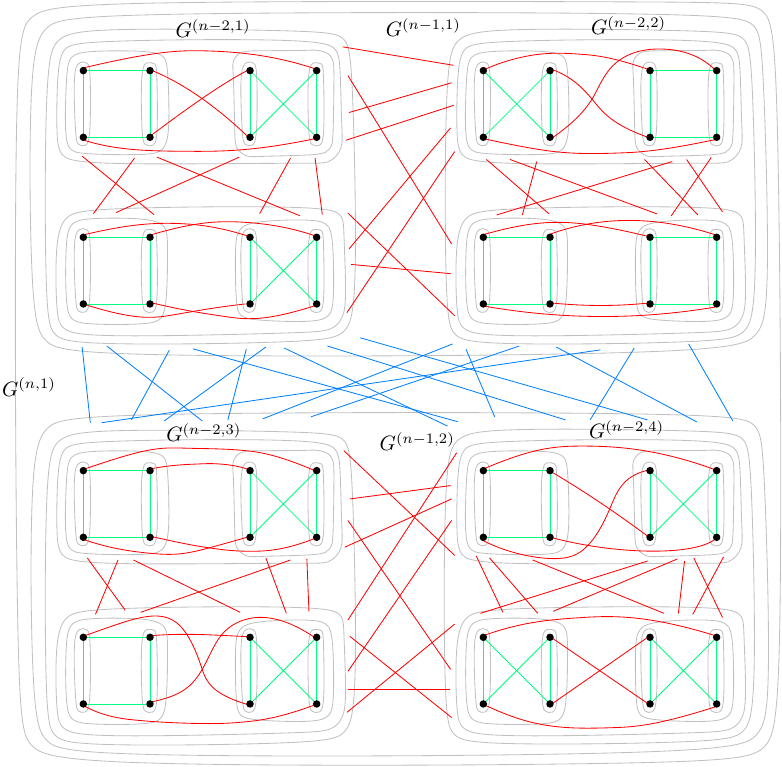}
\caption{An illustration of $\Gn$, the naming of its subgraphs, and the colouring of its edges.}
\label{fig:Gn}
\end{figure}

Given $\Gn$, we define a random time $\tau$ and an auxiliary walk $\til{X}$ as follows.

\begin{definition}
Let
\begin{equation}\label{eq:LK}
L:=\quad C_L\frac{n}{\log n}+C_{L,2}\sqrt{\frac{n}{\log n}},\qquad\quad K:=\quad C_K,
\end{equation}
where $C_L$ is an appropriately chosen constant (see choice after~\Cref{entropy_concentration}) and $C_{L,2}$ and $C_K$ are sufficiently large constants, to be specified later.

Let $\taured^{(L+K,j)}=g^{(1,j)}+...+g^{(L+K,j)}+(L+K-1)$ for $j=1,2$ where $g^{(i,j)}$ are independent $\Geomnonneg{\frac{0.1n}{n-1}}$ random variables and let $\taugeom\sim\Geomnonneg{\frac1n}$, independently of these. Let
\[\tau:=\quad\taugeom+1+\taured^{(L+K,1)}+1+\taured^{(L+K,2)}.\]

Let us fix a vertex $x$ of $\Gn$, and let $\til{X}$ be a walk on $\Gn$ defined up to time $\tau$, taking steps as follows. The first $\taugeom$ steps are a SRW from $x$ on the green and red edges of $\Gn$ (i.e.\ on the edges of the type $(n-1)$ subgraph containing $x$). Then for $i=1,2,...,L+K$ it crosses a uniformly chosen red edge emanating from its current position, then it takes $g^{(i,1)}$ steps as a SRW on the green edges. Then it crosses the blue edge emanating from its current position. Then for $i=L+K,...,2$ it takes $g^{(i,2)}$ steps as a SRW on the green edges and crosses a uniformly chosen red edge from its location. Finally, it takes $g^{(1,2)}$ steps as a SRW on the green edges. See~\Cref{fig:tau} for an illustration.
\end{definition}

\begin{figure}[h]
\centering
\includegraphics[width=150mm]{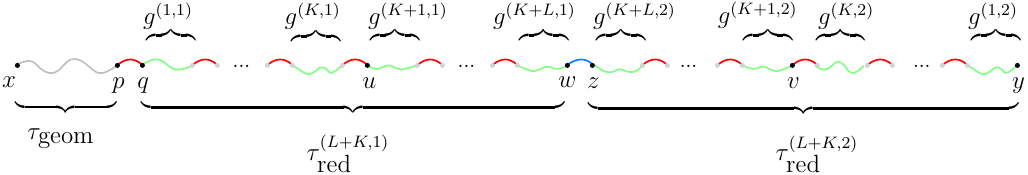}
\caption{An illustration of the walk $\til{X}$ up to time $\tau$.}
\label{fig:tau}
\end{figure}

The proof of \Cref{hit_upper_bound} relies on the following three results.

The distribution of $\til{X}_{\tau}$ is comparable to the uniform distribution $\U$.

\begin{proposition}\label{transition_probs}
There exists a constant $\theta$ such that with high probability $\Gn$ has the following property.
For any vertices $x$ and $y$ of $\Gn$ that are in different type $(n-1)$ subgraphs, we have
\begin{align*}
\prcond{\til{X}_{\tau}=y}{\Gn}{x}\quad\ge\quad\frac{\theta}{N}.
\end{align*}
\end{proposition}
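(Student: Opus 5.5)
We follow the route sketched in the overview. Condition on the position $q$ of $\til{X}$ after the initial $\taugeom$ steps. Since $x$ and $y$ lie in different type $(n-1)$ subgraphs, the single blue crossing must go from the half containing $x$ to the half containing $y$. By reversibility of SRW on the green and red edges (degrees are constant, equal to $n-1$ on each half), the reversed second half of the path is a walk started from $y$. The factor $\taured^{(L+K,2)}$ is a sum of geometric numbers of green steps separated by red crossings. This matches exactly a SRW on the type $(n-1)$ subgraph run until its $(L+K)$th red crossing: at each step the red edge is taken with probability $\frac{0.1n}{n-1}$. So $\prcond{\til{X}_\tau=y}{\Gn}{x}$ equals an average over $q$ of the sum~\eqref{eq:sum_overview}. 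It therefore suffices to show that whp, for every $q$ in one half and $y$ in the other, this sum is at least $\theta/N$. The initial $\taugeom$ steps only serve to randomise $q$, and we may simply use the bound for each fixed $q$ if we prove it uniformly.

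\textbf{Step 1.} Split the walk into its first $K$ red crossings and its next $L$ crossings. Show that with probability bounded below, after $K$ red crossings from $q$ (resp.\ from $y$) the walk sits at a vertex $u$ (resp.\ $v$) whose green--red neighbourhood up to depth $L$ red edges is well coupled with a quasi-tree $T_u$ (resp.\ $T_v$). These quasi-trees consist of i.i.d.\ green balls of radius $R$ joined by red long-range edges in a tree-like way. The choice $K=C_K$ large is used here: the few short cycles near $q$ or $y$ are escaped within $K$ red steps with probability bounded below. On $T_u$, a walk rarely re-enters a ball it left, because degrees are high. Hence the law of $\til Y_{\taured^L-1}$ is essentially a product over $L$ independent ball-excursions. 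An entropy concentration statement, which I expect is the later \Cref{entropy_concentration}, then gives that with probability bounded below the endpoint $w$ lies in a set $W_u$ of vertices each carrying probability at most $e^{-h L(1-o(1))}$. Choosing $C_L$ so that $hL\approx \log(N)/2 + C_{L,2}\sqrt{\cdot}$ slack makes these weights at most $N^{-1/2}e^{-c'}$ while the total mass stays $\gtrsim 1$. The same holds for $z$ from $v$.

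\textbf{Step 2.} The blue matching is independent of the green and red structure and of the walks, which use only edges inside each half. So $S:=\sum_{w,z}\mu_u(w)\nu_v(z)\1{\eta(w)=z}$ has mean $\sum_w \mu_u(w)\nu_v(z)$ with $z$ uniform, giving order $\frac{2}{N}\cdot\mu_u(W_u)\nu_v(Z_v)\gtrsim 1/N$. Since all individual weights are $\le N^{-1/2}e^{-c'}$, a concentration inequality for functions of a uniform matching gives $S\ge \theta/N$ except with probability $\exp(-cN^{\delta})$-type. This is the analogue of~\cite[Lemma 5.1]{cutoff_NBRW_on_sparse_random_graphs}, applied to the truncated measures. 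A union bound over the $N^2$ pairs $(u,v)$, or pairs $(q,y)$, then gives the result whp.

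\textbf{Main obstacle.} The hardest part is Step 1's coupling with the quasi-tree uniformly over all starting points. The green balls in $\Gn$ are not independent, since they come from type $0.9n$ subgraphs sharing red matchings, and one must show that for every $q$ a positive fraction of the mass after $K$ red steps lands on "good" $u$. The plan is to explore the $L$-neighbourhood of $u$ by revealing red matchings edge by edge. Collisions with already explored vertices have probability about $|\text{explored}|/2^{0.9n}$, which is polynomially small because the explored region has size $N^{1/2+o(1)}\ll 2^{0.9n}$. This yields good neighbourhoods with probability $1-N^{-3}$ each, allowing a union bound over all vertices.
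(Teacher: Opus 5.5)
Your architecture matches the paper's: the reversal giving the sum \eqref{eq:sum}, a $K$-root layer, the truncated quasi-tree coupling, entropy concentration, the Lubetzky--Sly/Chatterjee concentration for the blue matching, and a union bound over pairs. However, the two quantitative steps that make the union bound possible fail as you state them.

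\textbf{Step 2.} The matching concentration bound is $\exp(-a^2/(4bm))$ with $m=2a\asymp 1/N$ and $b=\max_{w,z}\mu_u(w)\nu_v(z)$, so it equals $\exp(-\Theta(1/(Nb)))$. If the weights are only $\le e^{-c'}N^{-1/2}$, then $b\le e^{-2c'}/N$ and the bound is a constant, not $\exp(-cN^{\delta})$. This is not slack in the inequality. If $\mu_u$ and $\nu_v$ are each spread over about $e^{c'}\sqrt N$ vertices, the number of blue edges joining their supports is roughly Poisson with mean $\asymp e^{2c'}$, so $S=0$ with constant probability.

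To survive a union bound over $N^2$ pairs you need $b\ll 1/(N\log N)$, i.e.\ weights $\ll N^{-1/2}(\log N)^{-1/2}$. The paper gets weights $<1/(\sqrt N\log N)$ by building a margin $\sqrt{(\log N)(\log\log N)}$ into $\truncprime{\cdot}$. It can afford this without losing mass because $C_{L,2}$ is taken large, so that $(L-K)\h$ exceeds $\frac12\log N$ by more than this margin plus the $C\sqrt{L\V}\asymp\sqrt{n\log n}$ fluctuations from \Cref{entropy_concentration}. Your bound $e^{-\h L(1-o(1))}$ is far too crude here: the error has to be controlled at the CLT scale, not up to a factor $e^{o(\log N)}$.

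\textbf{Main obstacle.} The per-edge overlap probability $N^{-0.4+o(1)}$ does not imply that the depth-$L$ neighbourhood is good with probability $1-N^{-3}$. The truncated exploration reveals up to $N^{1/2+o(1)}$ balls, so summing your estimate gives an expected number of overlaps as large as $N^{0.1+o(1)}$. Overlaps should be expected, and the neighbourhood is not tree-like.

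What must be proved is a quenched statement: the walk's conditional probability of meeting an overlap, a failed optimal coupling, or a truncated edge before level $L$ is $<\theta$. This must hold for each pair except with probability $o(N^{-2})$. Annealed estimates such as \Cref{coupling_from_good_succeeds} only give failure probability $\ll 1$ after averaging over the exploration.

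The paper closes this gap with the level-$K$ vertices $z_i$ in three steps. First, at most $B$ of them are bad except with probability $o(N^{-2})$ (\Cref{num_bad}). Second, from each good $z_i$ the coupling succeeds with conditional probability $\ge 1-\theta$. Third, each $z_i$ carries harmonic mass $h(z_i)\le(0.1n-1)^{-K}\ll n^{-2}$, so the total mass of the $z_i$ from which the coupling fails concentrates at the required $o(N^{-2})$ level (\Cref{coupling_succeeds}). Thus $K$ serves not only to escape short cycles but to split the walk over many nearly independent sub-explorations, and this idea is missing from your plan.
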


The time $\tau$ is likely of order at most $n$.

\begin{lemma}\label{tau_upper_bound}
For any $\theta\in(0,1)$, for a sufficiently large constant $C$, for any realisation of the graph $\Gn$ and any vertex $x$, we have $\prcond{\tau>Cn}{\Gn}{x}\le\theta$.
\end{lemma}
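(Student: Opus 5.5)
The key observation is that $\tau$ is defined purely in terms of auxiliary random variables that do not depend on $\Gn$ or on the starting vertex $x$. We have
\[\tau=\taugeom+1+\taured^{(L+K,1)}+1+\taured^{(L+K,2)},\]
where $\taugeom\sim\Geomnonneg{\frac1n}$ and $\taured^{(L+K,j)}=\sum_{i=1}^{L+K}g^{(i,j)}+(L+K-1)$ with $g^{(i,j)}$ independent $\Geomnonneg{\frac{0.1n}{n-1}}$ variables. The walk $\til{X}$ only uses these variables to decide how many steps to take, so the law of $\tau$ is the same for every realisation of $\Gn$ and every $x$. It therefore suffices to show that $\tau/n$ is tight, i.e.\ that $\pr{\tau>Cn}\le\theta$ for $C$ large. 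I would do this by bounding $\E{\tau}\lesssim n$ and applying Markov's inequality.

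First, $\E{\taugeom}=\frac{1-1/n}{1/n}=n-1$. Second, each $g^{(i,j)}$ is geometric on $\Znonneg$ with success probability $p=\frac{0.1n}{n-1}\ge 0.1$, so $\E{g^{(i,j)}}=\frac{1-p}{p}\le 9$. Hence
\[\E{\taured^{(L+K,j)}}\le 9(L+K)+(L+K-1)\le 10(L+K).\]
By the definition~\eqref{eq:LK}, $L+K=C_L\frac{n}{\log n}+C_{L,2}\sqrt{\frac{n}{\log n}}+C_K=o(n)$, so in particular $L+K\le n$ for all large $n$, and for small $n$ we can absorb everything into the constant. Altogether $\E{\tau}\le n+2+20(L+K)\le C_0 n$ for some absolute constant $C_0$ and all $n\ge1$.

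Markov's inequality then gives $\prcond{\tau>Cn}{\Gn}{x}\le \frac{C_0}{C}$, which is at most $\theta$ once $C\ge C_0/\theta$. This choice is uniform in $\Gn$ and $x$, as required.

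There is no real obstacle here. The only point needing care is to state explicitly that the conditioning on $\Gn$ and $x$ is irrelevant, since $\tau$ is built from independent variables external to the graph. One should also note that the bound must hold for all $n$, not just large $n$; this is handled by enlarging $C_0$ to cover the finitely many small $n$, or simply by using the crude bound $L+K\lesssim n$ valid for all $n\ge2$.
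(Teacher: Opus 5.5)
Your proof is correct. Because $\tau$ is built only from $\taugeom$ and the $g^{(i,j)}$, its law does not depend on $\Gn$ or $x$, so the bound $\E{\tau}\le n+1+20(L+K)\lesssim n$ together with Markov's inequality gives the claim. The paper omits this proof as ``very simple'', and your argument is the intended one; the only nitpick is that $C_0$, and hence $C$, depends on the fixed constants $C_L$, $C_{L,2}$, $C_K$ rather than being absolute, which is harmless.
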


The walk $\til{X}$ can be coupled closely with a simple random walk on $\Gn$.

\begin{lemma}\label{couple_X_Xtil}
For any choice of $C_L$, $C_{L,2}$ and $C_K$, any realisation of the graph $\Gn$ and any vertex~$x$, there exists a coupling between the pair $\left(\tau,\til{X}\right)$ defined as above, and a simple random walk $X$ on $\Gn$ starting from $x$, such that with probability $1-o(1)$ the walks $X$ and $\til{X}$ agree up to time $\tau$. Here the $o(1)$ can depend on the parameters $C_L$, $C_{L,2}$ and $C_K$, but it does not depend on the realisation of $\Gn$ and the choice of the vertex $x$.
\end{lemma}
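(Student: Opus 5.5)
The idea is to realise the simple random walk $X$ on $\Gn$ by splitting each step according to the colour of the edge it uses. Every vertex of $\Gn$ has degree $n$: one blue edge (type $n$), $0.1n$ red edges (types $0.9n,\dots,n-1$) and $0.9n-1$ green edges. So a SRW step from any vertex can be generated as follows. With probability $\frac1n$ cross the blue edge. With probability $\frac{0.1n}{n}$ cross a uniform red edge. Otherwise, with probability $\frac{0.9n-1}{n}$, cross a uniform green edge. Conditioned on not taking the blue edge, the step is a SRW step on the green and red edges. Conditioned on taking neither blue nor red, it is a SRW step on the green edges. Both conditional laws are the same at every vertex, because the degree counts do not depend on the vertex or on the realisation of $\Gn$. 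This uniformity is why the coupling, and hence the $o(1)$, will not depend on $\Gn$ or $x$.

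I would build the coupling by running $X$ with this colour decomposition and comparing its colour sequence with the prescribed colour pattern of $\til{X}$. That pattern is: $\taugeom$ steps on green/red, then a red step, $g^{(1,1)}$ green steps, red, and so on, a blue step, and then the mirrored sequence. Concretely, I sample i.i.d.\ colour labels $c_1,c_2,\dots$ with $\pr{c_t=\text{blue}}=\frac1n$, $\pr{c_t=\text{red}}=\frac{0.1n}{n}$ and $\pr{c_t=\text{green}}=\frac{0.9n-1}{n}$. At step $t$, $X$ crosses a uniform edge of colour $c_t$, using the same uniform choices that $\til{X}$ uses for that colour class.

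The next step is to compare the laws of the two colour sequences. Among non-blue steps, the number of consecutive green steps before a red step is $\Geomnonneg{\frac{0.1n}{n-1}}$, which is exactly the law of $g^{(i,j)}$. The parts that do not match exactly are these. First, $\til{X}$ forbids blue steps during the $2(L+K)+\sum_{i,j}g^{(i,j)}$ green/red phase after the initial segment. Second, it forces exactly one blue step at the prescribed position. Third, the initial geometric phase $\taugeom$ has to be matched with the first blue time of $X$.

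I would set $\taugeom$ equal to (first blue time of $X$) $-1$, which is $\Geomnonneg{\frac1n}$, exactly as required. From that time on, I would let $\til{X}$ follow the non-blue-conditioned colour choices of $X$. The failure event is then that $X$ takes a blue step during the remaining $\taured^{(L+K,1)}+1+\taured^{(L+K,2)}$ steps. The exceptions are the designated blue step and the red step at the designated time, which I have to check matches the pattern: after the blue crossing, $\til{X}$ starts with green steps, not red. More precisely, I would align the two phases by counting red crossings of $X$ before and after its first blue step, and I would define $\taured^{(L+K,1)}$ from the run of $X$ itself.

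The main obstacle is bookkeeping so that the coupled $(\tau,\til{X})$ has exactly the prescribed joint law, and not just approximately. To handle this, I would couple the random variables of $\til{X}$ with those of $X$ on the good event and sample them independently otherwise. I would then show that the total-variation distance between the two colour-pattern laws is at most
\[\pr{\text{a blue label occurs among }O(L+K)\cdot O(1)\text{ non-initial steps}}\lesssim \frac{(L+K)}{n}\cdot O(1)=O\!\left(\frac{1}{\log n}\right).\]
This bound uses that $\E{g^{(i,j)}}=O(1)$ and $L+K\lesssim \frac{n}{\log n}$. The mismatch in the position of the blue step (the first blue label must fall after the $(L+K)$-th red crossing) costs an extra $O\bigl(\frac{L+K}{n}\bigr)$ by the same estimate. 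Since these distances are $o(1)$ uniformly, the maximal coupling of colour patterns, combined with shared uniform edge choices given the colours, gives the lemma.
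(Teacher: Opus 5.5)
\textbf{There is a genuine gap: the step that aligns $X$ with $\til{X}$ fails.} Several parts of your proposal are correct:
\begin{itemize}
\item the colour decomposition of a SRW step;
\item identifying the green runs between red steps as $\Geomnonneg{\frac{0.1n}{n-1}}$;
\item uniformity in $\Gn$ and $x$;
\item the target order $O\left(\frac{L+K}{n}\right)=O\left(\frac{1}{\log n}\right)$.
\end{itemize}

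Take $\taugeom:=(\text{first blue time of }X)-1$. Its law is indeed $\Geomnonneg{\frac1n}$, but the timing is wrong. The walk $X$ then crosses its blue edge at time $\taugeom+1$. By contrast, $\til{X}$ crosses a red edge at time $\taugeom+1$, and crosses its only blue edge at time $\taugeom+\taured^{(L+K,1)}+2$. So $X_{\taugeom+1}$ and $\til{X}_{\taugeom+1}$ lie in different type $(n-1)$ subgraphs, and the coupling fails with probability one.

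The first blue time of $X$ has to be matched with $\taugeom+\taured^{(L+K,1)}+2$, not with $\taugeom+1$. This is a $\Geompos{\frac1n}$ variable shifted by the independent amount $\taured^{(L+K,1)}+1$, and that shift is exactly where the $O\left(\frac{L+K}{n}\right)$ loss comes from. It also means $\taugeom$ must be read off from $X$ as whatever remains before the last $L+K$ (red step, green run) blocks preceding the blue crossing.

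Your closing appeal to a maximal coupling of colour patterns does not repair this on its own. You never say which prefix of $X$'s colour sequence, and which values of $\taugeom$ and $g^{(i,1)}$ extracted from it, are compared with $\til{X}$'s pattern. Your own earlier prescription for this is the incorrect one above.

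\textbf{The missing idea is to read the non-blue steps of $X$ before its first blue crossing \emph{backwards} from that crossing.}
\begin{itemize}
\item Given $\taublue^{(1)}$, the colours of these steps are iid. Reading backwards, one sees iid $\Geomnonneg{\frac{0.1n}{n-1}}$ green runs separated by single red steps.
\item After $L+K$ such blocks, the remaining prefix is, given its length, still a SRW on the green and red edges, independent of the blocks. This is why the paper defines $G^{(1)}_i,R^{(1)}_i$ from $i=L+K$ downwards and lets the walk first take $S^{(1)}$ free steps.
\item Conditionally on the $g^{(i,1)}$, couple $\taublue^{(1)}$ optimally with $\taugeom+\taured^{(L+K,1)}+2$. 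By memorylessness of the geometric distribution, this succeeds with probability $\left(1-\frac1n\right)^{\taured^{(L+K,1)}+1}$. On success, $S^{(1)}=\taugeom$, $G^{(1)}_i=g^{(i,1)}$ and $R^{(1)}_i=1$.
\item After the blue step, read the decomposition forwards. One only needs no blue crossing during the next $\taured^{(L+K,2)}$ steps.
\end{itemize}
Averaging over the $g^{(i,j)}$, which have mean $O(1)$, gives a total failure probability $\lesssim\frac{L+K}{n}\asymp\frac{1}{\log n}$, uniformly in $\Gn$ and $x$.
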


The proof of \Cref{tau_upper_bound} is very simple and omitted. The proof of \Cref{couple_X_Xtil} is also quick, and it is presented below.

\begin{proof}[Proof of \Cref{couple_X_Xtil}] %\magenta{[Double-check this proof.]}
Note that a simple random walk $X$ on $\Gn$ at each step crosses a blue edge with probability $\frac1n$, independently for different steps, and conditional on which steps are blue, the remaining steps are green with probability $\frac{0.9n-1}{n-1}$ and red otherwise, independently of each other. Because of this we can sample a simple random walk $X$ on $\Gn$ up to the second time it crosses a blue edge as follows.

Let $\taublue^{(1)},\taublue^{(2)}\sim\Geompos{\frac1n}$ and let $g^{(i,j)}\sim\Geomnonneg{\frac{0.1n}{n-1}}$, all independently of each other.

%Let $B^{(j)}=\taublue^{(j)}$, let $G^{(j)}_1=g^{(1,j)}\wedge(B^{(j)}-1)$, $R^{(j)}_1=1\wedge(B^{(j)}-1-G^{(j)}_1)$, $G^{(j)}_2=g^{(2,j)}\wedge(B^{(j)}-1-G^{(j)}_1-R^{(j)}_1)$, ... $R^{(j)}_{L+K}=1\wedge(B^{(j)}-1-G^{(j)}_1-R^{(j)}_1...-G^{(j)}_{L+K})$, $S^{(j)}=B^{(j)}-1-G^{(j)}_1-...-R^{(j)}_{L+K}$. 
%
%Let $X$ take $S^{(1)}$ steps as a SRW on the green and red edges, then for $i=L+K,L+K-1,...,1$ let it take $R^{(1)}_i$ steps as a SRW on the red edges, then $G^{(1)}_i$ steps as a SRW on the green edges. Then let it cross a blue edge. Then for $i=1,2,...,L+K$ let it take $G^{(2)}_i$ steps as a SRW on the green edges, then $R^{(2)}_i$ steps as a SRW on the red edges. Finally, let it take $S^{(2)}$ steps as a SRW on the green and red edges, and then let it cross a blue edge.

%\magenta{I think we might need to do this instead for the above paragraphs: 

Let $B^{(j)}=\taublue^{(j)}$, let $G^{(j)}_{K+L}=g^{(K+L,j)}\wedge(B^{(j)}-1)$, $R^{(j)}_{K+L}=1\wedge(B^{(j)}-1-G^{(j)}_{K+L})$, $G^{(j)}_{K+L-1}=g^{(K+L-1,j)}\wedge(B^{(j)}-1-G^{(j)}_{K+L}-R^{(j)}_{K+L})$, $\ldots$, $R^{(j)}_{1}=1\wedge(B^{(j)}-1-G^{(j)}_{K+L}-R^{(j)}_{K+L}...-G^{(j)}_{1})$, $S^{(j)}=B^{(j)}-1-G^{(j)}_1-...-R^{(j)}_{L+K}$. We note that none of the quantities defined above can be negative. Intuitively, we define $G^{(j)}_i$ and $R^{(j)}_i$ starting from $i=L+K$ going down to $1$, alternating between the two, to be $g^{(i,j)}$, and $1$, respectively, as long as the sum of all of the so far defined parameters is at most $B^{(j)}-1$, while if at some point the sum of so far defined parameters becomes at least $B^{(j)}-1$ we set the parameter we are currently defining so that the sum is exactly $B^{(j)}-1$ and define the remaining parameters to be $0$.

Let $X$ take $S^{(1)}$ steps as a SRW on the green and red edges, then, for $i=1, 2,\ldots, L+K$ let it take $R^{(1)}_i$ steps as a SRW on the red edges, then $G^{(1)}_i$ steps as a SRW on the green edges. Then let it cross a blue edge. Then for $i=L+K,\ldots,1$ let it take $G^{(2)}_i$ steps as a SRW on the green edges, then $R^{(2)}_i$ steps as a SRW on the red edges. Finally, let it take $S^{(2)}$ steps as a SRW on the green and red edges, and then let it cross a blue edge.
%}

Having this representation of $X$, let us couple it to $\til{X}$ as follows. Let us sample $g^{(i,j)}$, and conditional on these, let us couple $\taublue^{(1)}\sim\Geomnonneg{\frac1n}$ and $\taugeom+\taured^{(L+K,1)}+1\sim\Geompos{\frac1n}+g^{(1,1)}+...+g^{(L+K,1)}+L+K+1$ using their optimal coupling, so that they agree on the event $\left\{\taublue^{(1)}\ge g^{(1,1)}+...+g^{(L+K,1)}+L+K+1\right\}$.

Then on the event $\left\{\taublue^{(j)}\ge g^{(1,j)}+...+g^{(L+K,j)}+L+K+1\text{ for }j=1,2\right\}$, the walks $X$ and $\til{X}$ agree up to time $\tau$.

This event has probability $1-o(1)$, which we can see as follows. We have $1-\dtv{\taublue^{(1)}}{\taugeom+m+1}=\sum_{k\ge0}\pr{\taublue^{(1)}=k}\wedge\pr{\taugeom+m+1=k}=\left(1-\frac1n\right)^{m+1}$, so conditional on $\taured^{(L+K,1)}=m$, the optimal coupling fails with probability $1-\left(1-\frac1n\right)^{m+1}$. Taking expectation over $\taured^{(L+K,1)}$ and using the independence of the variables $g^{(i,j)}$ we get that the overall failure probability is\\ $\le1-\E{\left(1-\frac1n\right)^{g(1,1)+1}}^{L+K}\left(1-\frac1n\right)^2\le1-e^{-c/\log n}$ where $c>0$ is a constant. This is $o(1)$ as required. This finishes the proof.
\end{proof}

In the rest of this section and the following sections, we will be working towards the proof of \Cref{transition_probs}. The proof of \Cref{hit_upper_bound} given \Cref{transition_probs} and Lemmas~\ref{tau_upper_bound} and~\ref{couple_X_Xtil} is presented in \Cref{sec:hit}.

For any two vertices $x$ and $y$ that are not in the same type $(n-1)$ subgraph of $\Gn$, we can express the transition probability $\prcond{\til{X}_{\tau}=y}{\Gn}{x}$ as follows.

Let $Y$ be a SRW on the green and red edges of $\Gn$, independently of $\taugeom$, and let $\taured^{i}$ denote the $i$th time that $Y$ crosses a red edge. For a vertex $w$ of $\Gn$, let $\eta(w)$ denote its blue neighbour. For vertices $p,q$ of $\Gn$ let $\{p\sim_r q\}$ denote the event that $p$ and $q$ are connected via a red edge. Then we have
\begin{align}\label{eq:sum}
\prcond{\til{X}_{\tau}=y}{\Gn}{x}&\\
\nonumber
=\:\sum_{p,q,u,w,z,v}&\prcond{Y_{\taugeom}=p}{\Gn}{x}\frac{\1{p\sim_{r}q}}{0.1n}
\prcond{Y_{\taured^K}=u}{\Gn}{q}\prcond{Y_{\taured^K}=v}{\Gn}{y}\\
\nonumber
&\hspace{2cm}\cdot\prcond{Y_{\taured^L-1}=w}{\Gn}{u}\prcond{Y_{\taured^L-1}=z}{\Gn}{v}\1{\eta(w)=z},
\end{align}
where the sums are taken over all vertices of $\Gn$.

In what follows, we will show that for most $u$ and $v$ in different type $(n-1)$ subgraphs, their neighbourhoods according to the green and red edges, and the walks on these neighbourhoods can be approximated well by some random tree-like structures (quasi-trees) and walks on these. We use this to get a good control over $\prcond{Y_{\taured^L-1}=w}{\Gn}{u}$ and $\prcond{Y_{\taured^L-1}=z}{\Gn}{v}$. Then we use the randomness of the blue edges to show that $\sum_{w,z}\prcond{Y_{\taured^L-1}=w}{\Gn}{v}\prcond{Y_{\taured^L-1}=z}{\Gn}{y}\1{\eta(w)=z}$ is concentrated.

To extend the result to all $x$ and $y$ in different type $(n-1)$ subgraphs, we show that for any starting point, with high probability $Y_{\taured^K}$ is such that the above approximation can be applied for its neighbourhood.

\section{The random quasi-tree and its coupling to $\Gn$} \label{sec:quasi-tree}

In this section we define random quasi-trees and explain how to we use these to approximate the green and red neighbourhoods of two vertices in $\Gn$.

\subsection{Definition of a quasi-tree}

Analogously to~\cite{LRP} we consider a random quasi-tree with the green edges forming the balls, and the red edges serving as long-range edges.

Let
\begin{equation}\label{eq:R}
R:=\quad C_R\log n,
\end{equation}
where $C_R$ is a sufficiently large constant, to be specified later.

\begin{definition}\label{def:quasi-tree}
A \emph{random quasi-tree} $T$ is a random graph together with a map $\iota$ from its vertices to the vertex set $\Vn$ of $\Gn$, obtained as follows. Let us start from a root vertex $\rho$, with a given $\iota(\rho)$. Then consider a ball of radius $R$ around $\iota(\rho)$ in a random copy of the type $(0.9n-1)$ subgraph containing $\iota(\rho)$, and attach it to $\rho$. We call this an \emph{$R$-ball}. For each vertex $v$ in the ball let $\iota(v)$ be the corresponding vertex in $\Gn$. Then for each vertex $v$ in the ball, except for $\rho$ we add $0.1n$ edges leading to new vertices $w_{v,0.9n},...,w_{v,n-1}$, while for $v=\rho$ we add $0.1n-1$ new edges leading to new vertices $w_{v,0.9n+1},...,w_{v,n-1}$. We call these \emph{long-range edges}, and in particular call $(v,w_{v,k})$ a type $k$ long-range edge. For each $w_{v,k}$, we sample $\iota(w_{v,k})$ by choosing uniformly from the $2^k$ vertices of $\Gn$ with the property that they are in the same type $(k+1)$ subgraph as $\iota(v)$ but in different type $k$ subgraphs. Analogously to the above, we consider a ball of radius $R$ in an independent random copy of the type $(0.9n-1)$ subgraph containing $\iota(w)$, attach it to $w$ and let $\iota$ map each of its vertices to the corresponding vertex of $\Gn$. We continue analogously, but if $v$ is the new endpoint of a newly revealed long-range edge of type $k$, then the further long-range edges we add from $v$ are of types $\{0.9n,...,n-1\}\setminus\{k\}$. This way we obtain an infinite random graph consisting of $R$-balls and long-range edges between them, where apart from $\rho$, each vertex is the endpoint of exactly $0.1n$ long-range edges which are of types $0.9n,...,n-1$.
\end{definition}

Note that the edges of the $R$-balls correspond to green edges, and the long-range edges correspond to red edges, but the $R$-balls and the long-range edges are always sampled from a new copy of $\Gn$.

For a vertex $v$ in the quasi-tree, we define its level as the minimal number of long-range edges a path from $\rho$ to $v$ needs to cross. For a long-range edge $e$, we define its level as the level of its endpoint that is further from $\rho$, and we denote it by $\ell(e)$. 

For a vertex $v$ of the quasi-tree $T$, we let $T(v)$ denote the subgraph of $T$ induced by the $R$-ball of $v$ and the $R$-balls descending from this.

Let $Z$ be a walk on the quasi-tree that takes steps as a SRW, except that it is not allowed to cross a long-range in the direction towards to root.\footnote{More precisely, if $Z$ is at $\rho$ or at a vertex that is not the centre of an $R$-ball, it takes a step to a uniform neighbour. Otherwise it disregards the one neighbour that has a smaller level than the current vertex, chooses uniformly from the others, and takes a step there.} Let $\xi_k$ be the $k$th long-range edge crossed by $Z$.

If $Z$ starts from $\rho$, then the entropy of $\xi_k$ is concentrated in the following sense.

\begin{lemma}\label{entropy_concentration}
For any $\theta>0$, there exists a constant $C>0$ with the following property. Let $T$ be a random quasi-tree and let $\xi^{(1)}$ and $\xi^{(2)}$ be two independent copies of $\xi$ on $T$. Then for any $k$ we have
\[
\pr{\left|-\log\prcond{\xi^{(1)}_k=\xi^{(2)}_k}{\xi^{(1)},T}{\rho}-k\h\right|>C\sqrt{k\V}}\quad\le\quad\theta,
\]
where $\h=c_1\log n$, $\V=c_2(\log n)^2$ and $c_1,c_2\asymp1$ as $n\to\infty$.
\end{lemma}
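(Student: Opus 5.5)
The plan is to reduce the statement to a sum of i.i.d.\ increments along the path of $\xi^{(1)}$ and apply Chebyshev's inequality. First, $\prcond{\xi^{(1)}_k=\xi^{(2)}_k}{\xi^{(1)},T}{\rho}$ is the probability that an independent walk $Z^{(2)}$ crosses the same $k$ long-range edges as $Z^{(1)}$. The walk $Z$ never crosses a long-range edge back towards $\rho$. Hence the event $\{\xi^{(2)}_k=\xi^{(1)}_k\}$ is the event that for each $i\le k$, the walk $Z^{(2)}$, started from the entry point of the $i$th $R$-ball on the path of $\xi^{(1)}$, exits through the same long-range edge $\xi^{(1)}_i$. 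Writing $v_{i-1}$ for the endpoint of $\xi^{(1)}_{i-1}$ (with $v_0=\rho$), this gives the exact product formula
\[
-\log\prcond{\xi^{(1)}_k=\xi^{(2)}_k}{\xi^{(1)},T}{\rho}\;=\;\sum_{i=1}^k\bigl(-\log p_{v_{i-1}}(\xi^{(1)}_i)\bigr),
\]
where $p_v(e)$ is the probability that $Z$ started at $v$ first crosses the long-range edge $e$, among those descending from the $R$-ball of $v$. This requires $Z$ to exit the $R$-ball (boundary effects at radius $R$). I would handle that by noting that each step leaves through a red edge with probability about $0.1$, so reaching distance $R=C_R\log n$ before crossing a long-range edge has probability $n^{-\Omega(C_R)}$. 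Such events are negligible, or can be absorbed by declaring the walk to have a geometric exit.

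Second, I use the i.i.d.\ structure of the quasi-tree. Conditional on the type of the incoming long-range edge, the ball attached at $v_i$ and the outgoing edges are freshly sampled, and $\xi^{(1)}_{i+1}$ is chosen by the fresh walk. So the increments $D_i=-\log p_{v_{i-1}}(\xi^{(1)}_i)$ are independent (up to the type label, which only removes one of $0.1n$ types). They are identically distributed up to this negligible dependence on the excluded type. By symmetry of $\Gn$ the law of the ball does not depend on $\iota(v)$. I would define $\h:=\E{D_i}$ and $\V:=\vr{D_i}$, treating the root step separately since it has one fewer edge type; this affects only $O(1)$ terms. Chebyshev's inequality then gives $\pr{|\sum_i D_i-k\h|>C\sqrt{k\V}}\le 1/C^2\le\theta$ for $C=\theta^{-1/2}$. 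Any residual error from the first-step asymmetry and from the boundary is $O(\log n)=O(\sqrt{\V})$, and is absorbed by enlarging $C$.

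The remaining and main step is to show $\h\asymp\log n$ and $\V\asymp(\log n)^2$. The walk from the ball centre takes $\Geomnonneg{0.1n/(n-1)}$ green steps in the random type-$(0.9n-1)$ subgraph, and then picks one of the $0.1n$ red edges uniformly. So $p_v(e)=\frac{1}{0.1n}\,\mathbb{P}(\text{green walk ends at the base vertex of }e)$. For the lower bound on $\h$ I would use $-\log p\ge\log(0.1n)$. For the upper bound I would use that with probability bounded below the walk takes few green steps, say $m$. In a twisted hypercube, local neighbourhoods of radius $m$ are whp tree-like except for bounded overlap, so the endpoint probability is at least $n^{-O(m)}$. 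Hence $D_i=O(\log n)\cdot(\text{number of green steps}+1)$, and since the number of green steps has geometric tails, $\E{D_i^2}\lesssim(\log n)^2$. The lower bound $\V\gtrsim(\log n)^2$ should come from the variability of the number of green steps: with constant probability zero steps are taken, giving $D\approx\log n$, and with constant probability about $r$ steps are taken for a large constant $r$, giving $D\gtrsim r\log n$. This second bound needs control of the maximal point mass of an $r$-step green walk, at most roughly $n^{-cr}$, which I would obtain from the local near-tree structure of random twisted hypercubes. I expect this estimate on return and point probabilities in the random green subgraph to be the main obstacle. The statement only needs $c_1,c_2\asymp1$, not explicit constants, so crude bounds suffice.
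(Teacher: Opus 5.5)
Your structure is the paper's: non-backtracking of long-range edges gives the exact product formula, the balls are i.i.d., and Chebyshev concludes. In fact the increments are exactly i.i.d., root step included, because every ball centre ($\rho$ as well) offers $Z$ exactly $0.1n-1$ long-range edges. The ball boundary also needs no separate treatment, since your $p_v(e)$ is defined through the walk on the truncated ball itself.

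Where you differ is in bounding the moments of $D_1$. The paper uses the entropy moments $\entcond{1}{\xi_1}{T}$ and $\entcond{2}{\xi_1}{T}$, with chain-rule inequalities obtained by conditioning on $\taured$ and $Z_{\taured}$. You instead bound $D_1$ pointwise. Your route works and is more elementary, but two of its steps are heavier than needed.

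First, the step you call the main obstacle, $\vr{D_1}\gtrsim(\log n)^2$, should be dropped rather than proved. As written, with $\V:=\vr{D_1}$, the claim $c_2\gtrsim1$ rests on an unproven point-mass estimate for the green walk. Chebyshev only needs $\vr{D_1}\le\V$, so take $\V:=\mathbb{E}[D_1^2]$, which is exactly the paper's choice $\V=\entcond{2}{\xi_1}{T}$. The lower bound $\V\ge\h^2\ge\log^2(0.1n-1)$ is then immediate from $p_v(e)\le\frac{1}{0.1n-1}$, and the tail bound is still $C^{-2}$. Only the centring $\h=\mathbb{E}[D_1]$ must be exact, since $C_L$ is later defined through $c_1$.

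Second, the upper bound needs no tree-likeness of twisted hypercubes. Let $G$ be the number of green steps before the first long-range crossing. Since every vertex has fewer than $n$ neighbours, $Z^{(2)}$ reproduces the realised trajectory of $Z^{(1)}$ through the ball with probability at least $n^{-(G+1)}$. So $\log(0.1n-1)\le D_1\le(G+1)\log n$ deterministically. Moreover, $G$ is stochastically dominated by $\Geomnonneg{\frac{0.1n-1}{n-2}}$, which gives $\mathbb{E}[D_1^2]\lesssim(\log n)^2$ with no structural input about the random green subgraph.
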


\begin{proof} Let us use the notation $\ent{b}{\cdot}$ and $\entcond{b}{\cdot}{\cdot}$ from~\cite[Definitions 2.23 and 2.24]{weighted_random_matching}, and their properties from Section B of the same article.

Then we can note that for any realisation $t$ of $T$, we have $\entcond{1}{\xi_1}{T=t}\asymp\log n$ and $\entcond{2}{\xi_1}{T=t}\asymp(\log n)^2$, and the values of these only depend on the $R$-ball of $\rho$. \footnote{We drop the conditioning on $T=t$ from the notation. Let $\taured$ be the first time that $Z$ crosses a long-range edge. Then $\taured\stle\Geompos{\frac{0.1n}{n-1}}$, hence $\ent{1}{\taured}\asymp\ent{2}{\taured}\asymp1$. Also, $Z_k$ can take $\le n^k$ different values, hence $\entcond{1}{Z_k}{\taured=k}\le k\log n$ and $\entcond{2}{Z_k}{\taured=k}\le (k\log n)^2$. Conditional on the value of $\taured$ and $Z_{\taured-1}$, the value of $\xi_1$ is uniform over $0.1n$ or $0.1n-1$ different edges, hence we have $\entcond{1}{\xi_1}{Z_{\taured}=v,\taured=k}\asymp\log n$ and $\entcond{2}{\xi_1}{Z_{\taured}=v,\taured=k}\asymp(\log n)^2$ for any possible $v$ and $k$. Using these and that $\entcond{b}{\xi_1}{Z_{\taured},\taured}-O(1)\lesssim\ent{b}{\xi_1}\lesssim\entcond{b}{\xi_1}{Z_{\taured},\taured}+\entcond{b}{Z_{\taured}}{\taured}+\ent{b}{\taured}$ we get the result.}

Since the $R$-balls are iid and the walk $Z$ does not backtrack long-range edges, we get that\\ $-\log\prcond{\xi^{(1)}_k=\xi^{(2)}_k}{\xi^{(1)},T}{\rho}$ decomposes as a sum of iid copies of $-\log\prcond{\xi^{(1)}_1=\xi^{(2)}_1}{\xi^{(1)},T}{\rho}$. We can also note that $\vr{-\log\prcond{\xi^{(1)}_1=\xi^{(2)}_1}{\xi^{(1)},T}{\rho}}\le\entcond{2}{\xi_1}{T}$, hence for any $\theta$, for sufficiently large $C$ we have
\[
\pr{\left|-\log\prcond{\xi^{(1)}_k=\xi^{(2)}_k}{\xi^{(1)},T}{\rho}-k\h\right|>C\sqrt{k\V}}\quad\le\quad\theta
\]
where $\h=\entcond{1}{\xi_1}{T}=c_1\log n$, $\V=\entcond{2}{\xi_1}{T}=c_2(\log n)^2$ with $c_1,c_2\asymp1$ as $n\to\infty$.
\end{proof}

Let us choose the constant $C_L$ in~\eqref{eq:LK} as $C_L=\frac{\log 2}{2c_1}$.

\subsection{Coupling to $\Gn$}\label{sec:trunc_Kroot_coupling}

In this section we explain how we couple the (green and red) neighbourhoods of two vertices of $\Gn$ and the walks on them with random quasi-trees and walks on them.

Similarly to previous works, we are not able to closely couple the neighbourhood of a vertex $v$ in $\Gn$ to the entire first $L$ levels of a quasi-tree $T$, since the chance that some vertices appear multiple times in the $\iota$ of the first $L$ levels of $T$ is too high. Instead, we focus on coupling the parts that are likely to be visited by a walk. If a long-range edge of $T$ is too unlikely to be crossed by $Z$, then we disregard the part of $T$ descending from it (we `truncate' that edge). Below we define `truncation criteria' that specify more precisely when to do this. Also, if we already have a disagreement between some parts of $T$ and $\Gn$, then we do not continue trying to couple the parts of $T$ descending from it.

We also restrict our attention to vertices $v$ of $\Gn$ that are `$K$-roots', i.e.\ where we already know that their neighbourhood is tree-like in the first $K<L$ levels, and we only couple with $T$ from then onwards.

Once we coupled the neighbourhood of $v$ with a tree $T$, we also couple the walks $Y$ and $Z$ on these. We say that the coupling is successful if $Y$ and $Z$ reach level $L$ without visiting any parts of $\Gn$ and $T$ that disagree or got truncated.

Let $K=C_K$ as defined in \eqref{eq:LK}, where $C_K$ is a sufficiently large constant, to be specified later.

For a long-range edge $e$ of a given quasi-tree $T$ and a given constant $A$ we define truncation criteria as follows.
\[
\trunc{e}{A}:=\quad\left\{-\log\prcond{\xi_{\ell(e)}=e}{\xi,T}{}>\frac12\log N+A\sqrt{(\log\log N)(\log N)}\right\},
\]
\[
\truncprime{e}:=\quad\left\{-\log\pr{\xi_{L-K}=e}<\frac12\log N+\sqrt{(\log\log N)(\log N)}\right\}.
\]

Then we can bound the probability of $Z$ crossing a truncated edge before hitting level $L-K$ as follows.

\begin{lemma}
For any $\theta>0$, for sufficiently large values of $A$ (in terms of $\theta$ and $C_L$), and sufficiently large $n$ we have
\[\pr{\bigcup_{k=1}^{L-K}\trunc{\xi_k}{A}}<\theta.\]
Also, for sufficiently large values of $C_{L,2}$ (in terms of $\theta$ and $C_L$), and sufficiently large $n$ we have
\[\pr{\truncprime{\xi_{L-K}}}<\theta.\]
\end{lemma}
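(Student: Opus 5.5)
Both statements will follow from \Cref{entropy_concentration} together with the iid decomposition used in its proof. Write $S_k:=-\log\prcond{\xi_k=\xi'_k}{\xi,T}{\rho}$, where $\xi'$ is an independent copy. Because $Z$ never backtracks a long-range edge and the $R$-balls are iid, we have $S_k=\sum_{i=1}^k D_i$ with $D_i$ iid copies of $S_1$. They have mean $\h=c_1\log n$ and variance at most $\V=c_2(\log n)^2$. (In the truncation criteria, $\prcond{\xi_{\ell(e)}=e}{\xi,T}{}$ evaluated at $e=\xi_k$ is exactly this quantity.) Also note that $\log N=n\log 2$ and $\sqrt{(\log\log N)(\log N)}\asymp\sqrt{n\log n}$.

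\textbf{First claim.} Work with the centred walk $M_k:=S_k-k\h$, which is a martingale (a sum of iid mean-zero increments) with $\E{M_k^2}\le k\V$. For $k\le L-K$ we have
\[k\h\le (L-K)c_1\log n\le C_L c_1 n+C_{L,2}c_1\sqrt{n\log n}=\tfrac12\log N+C_{L,2}c_1\sqrt{n\log n}.\]
Hence $\trunc{\xi_k}{A}$ requires
\[M_k>A\sqrt{(\log\log N)\log N}-C_{L,2}c_1\sqrt{n\log n}\ge (A'-C_{L,2}c_1)\sqrt{n\log n},\]
where $A'\asymp A$. By Doob's (or Kolmogorov's) maximal inequality,
\[\pr{\max_{k\le L-K}M_k>\lambda}\le \frac{(L-K)\V}{\lambda^2}\lesssim\frac{(n/\log n)(\log n)^2}{(A'-C_{L,2}c_1)^2 n\log n}=\frac{O(1)}{(A'-C_{L,2}c_1)^2}.\]
This is less than $\theta$ once $A$ is large in terms of $\theta$, $C_L$ and $C_{L,2}$. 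If the lemma really means $A$ should depend only on $\theta$ and $C_L$, there is a problem: $C_{L,2}$ enters through $L$. I would handle it by noting that the $\sqrt{n/\log n}$ correction has coefficient $C_{L,2}$, and simply allowing the dependence on $C_{L,2}$. The main subtlety is exactly this bookkeeping of constants, including $c_1\log n$ versus $\log 2$.

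\textbf{Second claim.} Here there is a mismatch: $\truncprime{e}$ uses the unconditional probability $\pr{\xi_{L-K}=e}$, which averages over $T$, rather than the quenched quantity controlled by \Cref{entropy_concentration}. I would interpret it as the quenched $\prcond{\xi_{L-K}=e}{T}{}$, evaluated at $e=\xi_{L-K}$, and note that this is exactly $\prcond{\xi_{L-K}=\xi'_{L-K}}{\xi,T}{\rho}$. Then $\truncprime{\xi_{L-K}}$ says
\[S_{L-K}<\tfrac12\log N+\sqrt{(\log\log N)\log N}.\]
Now
\[(L-K)\h=\tfrac12\log N+C_{L,2}c_1\sqrt{n\log n}-O(\log n),\]
so the event requires
\[M_{L-K}<-(C_{L,2}c_1-O(1))\sqrt{n\log n}+O(\log n).\]
By Chebyshev, its probability is at most
\[\frac{(L-K)\V}{(C_{L,2}c_1-O(1))^2 n\log n}=\frac{O(C_L)}{(C_{L,2}c_1-O(1))^2},\]
which is less than $\theta$ for $C_{L,2}$ large in terms of $\theta$ and $C_L$, with $n$ large.

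If instead the annealed probability is genuinely intended, I would first reduce to the quenched statement. The step I expect to be the main obstacle is showing, for a typical $T$, that $\pr{\xi_{L-K}=e}$ is no larger than a constant multiple of the quenched probability. I would try using the fact that the $\iota$-labels of long-range endpoints are uniform over $2^k$ vertices, so averaging over $T$ only spreads mass further; that gives $-\log\pr{\cdot}\ge-\log\prcond{\cdot}{T}{}-O(1)$ in the relevant regime, and the Chebyshev bound above then applies.
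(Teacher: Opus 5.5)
Your proof is correct. For the first claim, though, you take a different route from the paper.

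The paper uses the fact that $\trunc{e}{A}$ is inherited by every descendant $e'$ of $e$. In your notation, each increment $D_i$ is minus the logarithm of a conditional probability, so it is nonnegative, and $k\mapsto S_k$ is nondecreasing. The threshold $\frac12\log N+A\sqrt{(\log\log N)(\log N)}$ does not depend on the level, so this gives $\bigcup_{k=1}^{L-K}\trunc{\xi_k}{A}=\trunc{\xi_{L-K}}{A}$. A single application of \Cref{entropy_concentration} at $k=L-K$ then finishes, just as in your second part.

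You instead bound $\max_{k\le L-K}(S_k-k\h)$ with Kolmogorov's maximal inequality. That is valid, but it forces you to reopen the iid structure inside the proof of \Cref{entropy_concentration} instead of using its statement as a black box. The fixed-$k$ statement plus a union bound over the $\asymp n/\log n$ levels would not be enough, so some extra idea is needed either way. The uniform-in-$k$ control your route provides would only pay off if the truncation threshold depended on the level. Noticing $D_i\ge0$ makes it unnecessary here.

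Your second part is the same as the paper's argument.

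Two side remarks.
\begin{enumerate}
\item You are right that $A$ must also be large in terms of $C_{L,2}$, and this applies equally to the paper's proof. The reason is that $(L-K)\h$ exceeds $\frac12\log N$ by $C_{L,2}c_1\sqrt{n\log n}-O(\log n)$. This is harmless once $C_{L,2}$ has been fixed first, through the second claim, as a function of $\theta$ and $C_L$.
\item The quenched reading of $\truncprime{\cdot}$ is the intended one. The exploration checks it with respect to $T_x(z_i)$ or $T_y(z_i)$, and \Cref{prob_between_Kroots} uses it to bound quenched probabilities. Your final paragraph on an annealed version rests on an inequality you do not actually justify, and it can simply be dropped.
\end{enumerate}
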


%\begin{proof}[Sketch proof]
%We use that $\trunc{e}{A}$ implies $\trunc{e'}{A}$ for all descendants $e'$ of $e$, and~\Cref{entropy_concentration} with $k=L-K$.
%\end{proof}
%\magenta{delete above and instead write:
\begin{proof}
The proof follows using that $\trunc{e}{A}$ implies $\trunc{e'}{A}$ for all descendants $e'$ of $e$, and~\Cref{entropy_concentration} with $k=L-K$.
\end{proof}%}

\begin{definition}\label{def:K-root} Let $u$ be a vertex of $\Gn$ and let $e_u$ be the type $0.9n$ red edge from it. Then we say that $u$ is a $K$-root in $\Gn$ if a neighbourhood of $u$ of green and red edges in $\Gn\setminus\{e_u\}$ is a possible realisation of ($\iota$ of) the first $K$ levels of a quasi-tree. We denote the set of $K$-roots by $\VKroot$.
\end{definition}

Let us consider any two vertices $x$ and $y$ that are in different type $(n-1)$ subgraphs of $\Gn$ and work on the event that both of them are $K$-roots, with corresponding neighbourhoods $T_{x,0}$ and $T_{y,0}$ respectively. Then we couple the neighbourhoods of $x$ and $y$ with the first $L$ levels of two independent quasi-trees $T_x$ and $T_y$ conditioned to have the first $K$ levels as $T_{x,0}$ and $T_{y,0}$ respectively, as follows.

We denote the level $K$ vertices of $T_x$ by $z_1$, ..., $z_{L_x}$, and the level $K$ vertices of $T_y$ by $z_{L_x+1}$, ..., $z_{L_x+L_y}$. Then we explore from $z_1$ up to level $L$ as follows.

Firstly, we choose $k\in\{0.9n,...,n-1\}$ such that the type $k$ long-range edge of $v$ is not yet explored, we reveal the other endpoint $v$ of the type $k$ long-range edge from $z_1$ in $T_x$, and couple $\iota(v)$ with the other endpoint of the type $k$ red edge from $\iota(z_1)$ in $\Gn$ according to the optimal coupling. We also couple the $R$-ball of $v$ with the distance $R$ green neighbourhood of $\iota(v)$ in $\Gn$ according to the optimal coupling. If either of these optimal couplings fails, we say that the long-range edge $(z_1,v)$ is truncated due to the optimal coupling failing. Otherwise, if the distance $R$ green neighbourhood of $\iota(v)$ contains any of the already explored vertices, we say that $(z_1,v)$ is truncated due to an overlap. Otherwise, if $\trunc{(z_1,v)}{A}$ or $\truncprime{(z_1,v)}$ holds with respect to $T_x(z_1)$, we say that $(z_1,v)$ is truncated according to a truncation criterion. We proceed similarly with the other yet unexplored long-range edges from $z_1$. If a long-range edge $(z_1,v)$ is truncated for any of the above reasons, then we will not explore further into this direction. Otherwise we continue exploring analogously from each vertex in the $R$-ball of $v$. We continue the exploration up to level $L$. (In each level, the truncation criteria are considered with respect to $T_x(z_1)$.)

Then we explore similarly from $z_2$, $z_3$, ..., $z_{L_x+L_y}$. (When exploring from $z_i$ we consider the truncation criteria with respect to $T_x(z_i)$ or $T_y(z_i)$.)

We denote by $\F_i$ the $\sigma$-algebra generated by the explorations from $z_1$, ..., $z_i$. We say that $z_i$ is \emph{good} if it is not contained in any $R$-ball revealed during the explorations from $z_1$, ..., $z_{i-1}$. Otherwise we call $z_i$ \emph{bad}.

After coupling the neighbourhoods of $x$ and $y$ with the trees, we couple independent walks $Y^{(1)}$ and $Y^{(2)}$ on the green and red edges of $\Gn$ from $x$ and $y$ respectively, with independent walks $Z^{(1)}$ and $Z^{(2)}$ on $T_x$ and $T_y$ from $x$ and $y$ respectively such that the following are satisfied. Let $\taured^{(L,i)}$ denote the $L$th time that $Y^{(i)}$ crosses a red edge. We say that a vertex $v$ of $T$ is at the boundary of an $R$-ball if it is at graph distance $R$ from the centre of the $R$-ball it is contained in.

We couple $Y^{(1)}$ and $Y^{(2)}$ with $Z^{(1)}$ and $Z^{(2)}$ up to $\taured^{(K,1)}$ and $\taured^{(K,2)}$ respectively such that on the event that for $i=1,2$
\begin{enumerate}[(i)]
\item up to $\taured^{(K,i)}$ the walk $Z^{(i)}$ does not hit the boundary of an $R$-ball,
\item the first red edge that $Y^{(i)}$ crosses is not the type $0.9n$ %\magenta{I think it should be $0.9n$}
red edge from $Y^{(i)}_0$, and
\item up to the $K$th time that $Y^{(i)}$ crosses a red edge, it never backtracks the most recently crossed red edge,
\end{enumerate}
we have $Y^{(i)}_j=\iota\left(Z^{(i)}_j\right)$ for $i=1,2$, $j\le\taured^{(K,i)}$.

We say that the coupling is successful in the first $K$ levels if (i), (ii) and (iii) hold. In this case if $Z^{(i)}$ reached level $K$ at $z_{k_i}$, then we couple the rest of the walks $Y^{(1)}$ and $Y^{(2)}$ with $Z^{(1)}$ and $Z^{(2)}$ from $z_{k_1}$ and $z_{k_2}$ until $\taured^{(L,1)}$ and $\taured^{(L,2)}$ respectively such that on the event that for $i=1,2$
\begin{enumerate}[(i)]
\item[(iv)] between $\taured^{(K,i)}$ and $\taured^{(L,i)}$ the walk $Z^{(i)}$ does not cross a truncated long-range edge and does not hit the boundary of an $R$-ball,
\item[(v)] between the $K$th and the $L$th time that $Y^{(i)}$ crosses a red edge, it never backtracks the most recently crossed red edge,
\end{enumerate}
we have $Y^{(i)}_j=\iota\left(Z^{(i)}_j\right)$ for $i=1,2$, $\taured^{(K,i)}\le j\le\taured^{(L,i)}$.

We say that the coupling is successful between levels $K$ and $L$ if (iv) and (v) hold.

We say that the coupling is successful if it is successful in the first $K$ levels, and also between levels $K$ and $L$.

%\magenta{Add some figures here?}

\section{Estimates regarding $K$-roots and the success of coupling}\label{sec:success}

In this section we prove an estimate about $\VKroot$ that will make it sufficient to consider the case when $u$ and $v$ in~\eqref{eq:sum} are both $K$-roots, and then we estimate the probability that the coupling around two $K$-roots succeeds.

\subsection{Estimates regarding $K$-roots} \label{sec:K-root_estimates}

In this section we show that for any starting vertex, $Y_{\taured^K}$ is a $K$-root with high probability.

%\begin{lemma}\label{many_Kroots}
%With high probability $\Gn$ is such that $|\VKroot|=(1-o(1))N$.
%\end{lemma}

\begin{lemma}\label{hit_Kroot} For sufficiently large values of $C_K$ the following holds. With high probability $\Gn$ is such that for any vertex $q$ we have $\prcond{Y_{\taured^K}\in \VKroot}{\Gn}{q}=1-o(1)$.
\end{lemma}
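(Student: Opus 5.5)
We need to show that whp, for every starting vertex $q$, the walk $Y$ after $K$ red crossings sits at a $K$-root with probability $1-o(1)$. The plan is to reduce this to a first-moment statement about "bad" vertices with many short cycles nearby, and then use that the walk crosses red edges to spread out over many essentially independent subcubes. By \Cref{def:K-root}, a vertex $u$ fails to be a $K$-root exactly when its green–red neighbourhood in $\Gn\setminus\{e_u\}$, up to $K$ red levels with $R$-balls of green radius $R$, is not tree-like. This happens when one of three things occurs: an $R$-ball is not distributed as a ball of a quasi-tree (that is, the green ball itself contains overlaps inconsistent with a fresh copy of the type $(0.9n-1)$ subgraph); two $R$-balls in the explored structure intersect; or a red edge closes a cycle.

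\textbf{Step 1 (annealed estimate).} For a fixed vertex $u$ we explore its neighbourhood level by level, revealing red edges (uniform matchings between type $(k-1)$ subgraphs of size $2^{k-1}\ge 2^{0.9n-1}$) and green balls. The explored set has size at most $M:=(n\cdot n^{R})^{K+1}=e^{O((\log n)^2)}$, since an $R$-ball has at most $n^R=e^{C_R(\log n)^2}$ vertices and there are $K+1$ levels with at most $0.1n$ branches per vertex. Each newly revealed red endpoint is uniform over at least $2^{0.9n-1}-M$ vertices, so the chance it lands within green distance $R$ of the explored set is at most $M^2/2^{0.9n}$. Union bounding over at most $M$ revealed edges gives $\pr{u\notin\VKroot}\le e^{O((\log n)^2)}2^{-0.9n}$. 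The issue here is the green balls themselves: the type $(0.9n-1)$ subgraph is random but its internal ball structure is whatever it is. Since \Cref{def:quasi-tree} samples $R$-balls from copies of that subgraph, the balls are automatically realisable. Hence only overlaps between different balls and red-cycle closures need controlling, and these are governed by the uniform red matchings.

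\textbf{Step 2 (whp over $\Gn$, uniformly in $q$).} A first-moment bound gives $\E{|\Vn\setminus\VKroot|}\le N e^{O((\log n)^2)}2^{-0.9n}$, which is not yet enough, because we need control from every $q$. Instead we use the red randomness at the first step. From $q$, the walk $Y$ crosses its first red edge of type $k$ to a vertex that is uniform (over the matching) among $2^{k-1}$ vertices. It then needs to cross $K-1$ more red edges before stopping, and the final position $Y_{\taured^K}$ is therefore spread out. For fixed $q$ and fixed realisation of the walk's choice of types and green steps, the probability over the random matchings that $Y_{\taured^K}$ lands on a non-$K$-root is at most the probability that some vertex of the explored neighbourhood of $q$ (size $M$) is a non-$K$-root. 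By Step 1, with exploration extended by $K$ levels so that size stays $e^{O((\log n)^2)}$, this is $e^{O((\log n)^2)}2^{-0.9n}$. This is summable against $N=2^n$? No: $2^n\cdot2^{-0.9n}$ diverges. So the crude union bound over $q$ fails, and we need to take $K$ large and argue quantitatively.

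\textbf{Step 3 (main obstacle).} The key step is to show that whp, for every $q$, the law of $Y_{\taured^K}$ puts little mass on the set $B$ of non-$K$-roots, even though $B$ may be nonempty. I would show that $Y_{\taured^K}$ has max-probability at most $n^{-cK}$ uniformly in $q$, since each red crossing picks one of $0.1n$ types roughly uniformly. I would then show that whp every neighbourhood of size $e^{O((\log n)^2)}$ contains at most $O(1)$ bad vertices. The second claim follows because having $r$ independent overlap events inside a small neighbourhood costs $(M^2 2^{-0.9n})^{r}$; taking $r$ a fixed constant with $0.9r>1$ makes the union over $N$ vertices $q$ vanish. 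Combining the two gives $\prcond{Y_{\taured^K}\in B}{\Gn}{q}\le O(1)\,n^{-cK}=o(1)$ for $C_K$ large. The delicate point is justifying the max-probability bound, because green steps and backtracking could concentrate mass. I would handle this by conditioning on the sequence of types chosen at red steps: with probability $1-o(1)$ these are distinct, which forces distinct endpoints.
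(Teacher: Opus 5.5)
There is a genuine gap in Step 3, in the claim that whp every neighbourhood of size $e^{O((\log n)^2)}$ contains at most $O(1)$ non-$K$-roots. Your first-moment computation with $(M^2 2^{-0.9n})^r$ bounds the number of \emph{overlaps} (intersecting $R$-balls, i.e.\ short green--red cycles). It does not bound the number of bad vertices, and the two are very different, because a single overlap turns every vertex whose $K$-level exploration reaches it into a non-$K$-root. For instance, suppose the overlap lies in the branch of some first-level red edge $e$ leaving the $R$-ball of $q$. Then essentially every vertex at long-range level $K$ descending from $e$ is bad, and there are $e^{\Theta(K(\log n)^2)}$ of them, since each $R$-ball has $n^{\Theta(R)}$ vertices, each with $0.1n$ red edges. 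So $|B|$ near $q$ is not $O(1)$, and the bound $|B|\cdot\max_v\mathbb{P}_q(Y_{\taured^K}=v)\le |B|\,n^{-cK}$ is vacuous: $n^{-cK}=e^{-cK\log n}$ with $K=C_K$ constant cannot beat $e^{\Theta((\log n)^2)}$. The point-mass bound ignores how thinly the walk's mass is spread over the many vertices of each ball. Separately, ``distinct types force distinct endpoints'' only holds if the neighbourhood is tree-like, which is exactly what you are trying to control.

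The missing idea is to localise the bad set structurally rather than count it. Keep your estimate with $r=2$: whp every $q$ has at most one overlap in its level-$2K$ neighbourhood. The paper's bound is $\lesssim (n^{5RK}N^{-9/10})^2\ll 1/N$, which allows a union bound over $q$. On this event, consider a vertex reached after $K$ non-backtracking red crossings. Its $K$-neighbourhood sits inside the level-$2K$ neighbourhood of $q$. So it is a $K$-root unless the walk's first red crossing used one of the at most two first-level red edges whose descendants (or the level-$0$ ball) contain the overlap. Each red crossing picks one of $0.1n$ types uniformly, so using one of these two edges has probability $O(1/n)$. Backtracking one of the first $K+1$ red edges has probability $O(K/n)$. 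This gives $\mathbb{P}_q(Y_{\taured^K}\notin\VKroot)=o(1)$ uniformly in $q$, with no point-mass bound needed; this is the paper's argument.
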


\begin{proof}%[Proof of Lemmas~\ref{many_Kroots} and~\ref{hit_Kroot}]
Note that the number of vertices in a given $R$-ball is $\le2n^R$ and for any $k$ the number of vertices in the first $k$ levels of a quasi-tree is $\le n^{2kR}$. Also, if there are $s$ red edges revealed in $\Gn$ so far and we are revealing the red edge of a given type, from a given vertex, the probability of the other endpoint taking a specific value is $\le\frac{1}{2^{0.9n-1}-s}$. If $s\le2^{0.1n}$, then this is $\le4N^{-\frac{9}{10}}$.

Let us consider a given vertex $q$ of $\Gn$ and reveal its `level $2K$ neighbourhood' in $\Gn$ as follows. Firstly, let us reveal the green edges around it up to green distance $R$. Then, for each vertex in this ball, let us reveal the other endpoint of the red edges from it, except for the type $0.9n$ red edge from $q$. For each newly revealed endpoint, reveal the green edges around it up to green distance $R$. Then for each vertex in these newly revealed balls, let us reveal the other endpoint of the red edges emanating from it. Let us continue similarly for $2K$ levels. We say that an overlap occurs if a newly revealed ball of green distance $R$ intersects with any previously revealed ball.

%\magenta{Double check the following calculations.}

 Firstly, note that the probability of the walk backtracking any of the first {$K+1$} long-range edges crossed is $o(1)$, so whp crossing $K$ red edges corresponds to reaching long-range distance $K$. Also, from the above estimates we get that the probability that the level $2K$ neighbourhood of a given vertex $q$ contains more than one an overlap is $\le\pr{\Bin{n^{4RK}}{\frac{4n^{3RK}}{N^{\frac{9}{10}}}}>1}\lesssim\left(\frac{n^{5RK}}{N^{\frac{9}{10}}}\right)^2\ll\frac1N$. If there is only one overlap, then whp the first vertex reached by the walk that is at long-range distance $K$ from $q$ is a $K$-root. This is because the probability of reaching level $K$ at a vertex which is a descendant of a specific long-range edge $e$ leading to the first level is at most $o(1)$, as each vertex has $\Theta(n)$ long-range connections and the walk equally likely crosses any of them. If there is one overlap, then at most two long-range edges leading to the first level will have the overlap among one of its descendants (or with the level $0$ ball), meaning that as long as the walk avoids crossing to level one through one of these two edges it will be in a K-root when it reaches level $K$.

So whp $\Gn$ is such that the level $2K$ neighbourhood of each $q$ has at most 1 overlap, and on this event we have $\prcond{Y_{\taured^K}\in\VKroot}{\Gn}{q}=1-o(1)$.
\end{proof}

\subsection{Estimates regarding the success of coupling} \label{sec:coupling_estimates}

In this section we prove that with high probability $\Gn$ is such that for any two $K$-roots in different type $(n-1)$ subgraphs, the coupling described in~\Cref{sec:trunc_Kroot_coupling} has a positive probability of succeeding.

Note that in a given quasi-tree $T$ the number of non-truncated long-range edges at a given level is\\ $\le\exp\left(\frac12\log N+A\sqrt{(\log\log N)(\log N)}\right)=N^{\frac12+A\sqrt{\frac{\log\log N}{\log N}}}$. In $T_x$ and $T_y$ the number of vertices up to level $K$ is $\le n^{2KR}$ and the number of explored vertices in each level of each $T(z_i)$ is $\le N^{\frac12+A\sqrt{\frac{\log\log N}{\log N}}}$. Hence the overall number of explored vertices up to level $L$ is $\le Ln^{2KR}N^{\frac12+A\sqrt{\frac{\log\log N}{\log N}}}\ll N^{\frac12+\alpha}$ for any $\alpha>0$.

\begin{lemma}\label{num_bad}
There exists a constant $B$ with the following property.
Let $x$ and $y$ be two vertices in different type $(n-1)$ subgraphs of $\Gn$ and let $T_{x,0}$ and $T_{y,0}$ be possible realisations of the first $K$ levels of quasi-trees. Let $\{T_{x,0},T_{y,0}\}$ denote the event that some neighbourhoods of $x$ and $y$ in $\Gn$ look like $T_{x,0}$ and $T_{y,0}$ respectively. Then the number $\mathrm{Bad}$ of bad vertices (defined in~\Cref{sec:trunc_Kroot_coupling}) satisfies
\[
\prcond{\mathrm{Bad}>B}{T_{x,0},T_{y,0}}{}\quad\ll\quad\frac{1}{N^2}\:.\qedhere
\]
\end{lemma}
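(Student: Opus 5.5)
Conditional on $\{T_{x,0},T_{y,0}\}$, I would prove the bound by stochastically dominating $\mathrm{Bad}$ by a binomial random variable. The quantity to control is the conditional probability that $z_i$ is bad given $\F_{i-1}$ and given everything revealed in the first $K$ levels. By the count just before the lemma, the total number of explored vertices up to level $L$ is always at most $M:=N^{\frac12+\alpha}$, for any fixed small $\alpha>0$ (say $\alpha=\frac1{100}$). This bound holds deterministically, because truncation caps the number of explored vertices.

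The key step is to bound the probability that a newly revealed $R$-ball hits a given earlier vertex, and in particular that $z_i$ lands in some $R$-ball revealed in explorations from $z_1,\dots,z_{i-1}$. I would decompose this as follows. Each $R$-ball explored from $z_j$ ($j<i$) is attached to the endpoint of a red edge whose other endpoint was revealed in $\Gn$ during that exploration. As in the proof of \Cref{hit_Kroot}, given all previous revelations (at most $M\le 2^{0.1n}$ vertices), the endpoint of a newly revealed red edge of type $k\ge0.9n$ takes any specific value with probability at most $\frac{1}{2^{0.9n-1}-M}\le 4N^{-9/10}$. A ball of radius $R$ in green distance has at most $2n^R$ vertices. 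So when a new red endpoint is revealed, the probability that its green $R$-ball contains the fixed vertex $\iota(z_i)$ is at most $2n^R\cdot4N^{-9/10}$. This uses that green-distance-$R$ membership is symmetric: the endpoint must lie within the $R$-ball of $\iota(z_i)$, which has at most $2n^R$ vertices.

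Summing over the at most $M$ red endpoints revealed in the explorations from $z_1,\dots,z_{i-1}$, we get
\[\prcond{z_i\text{ bad}}{\F_{i-1},T_{x,0},T_{y,0}}{}\le 8n^RN^{\frac12+\alpha}N^{-\frac9{10}}\le N^{-\frac13}=:p.\]
There is a subtlety here, which I expect to be the main obstacle. $\iota(z_i)$ is itself fixed by $T_{x,0},T_{y,0}$, while the explorations from $z_j$ reveal red edges sequentially, so the argument must apply to each revelation in turn, conditionally on the past. I would order all red-edge revelations across all explorations chronologically. At each revelation the endpoint is close to uniform given the past, and its $R$-ball meets the fixed set $\{\iota(z_1),\dots,\iota(z_{L_x+L_y})\}$, of size $\le 2n^{2KR}$, with probability at most $8n^{2KR+R}N^{-9/10}$. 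The conditioning on $\{T_{x,0},T_{y,0}\}$ only fixes edges inside the first $K$ levels, and this changes the available-vertex count negligibly.

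Instead of counting bad $z_i$ one by one, I would then bound $\mathrm{Bad}$ by the number $H$ of revelations whose new ball contains some $\iota(z_l)$; each such event makes at most a bounded number of $z_l$ bad, and in fact a ball containing several $z_l$ requires further coincidences, which are even rarer. By the chronological conditional bound, $H$ is stochastically dominated by $\Bin{M}{q}$ with $q=8n^{2KR+R}N^{-9/10}$. Hence
\[\pr{H>B}\le\binom{M}{B+1}q^{B+1}\le\left(Mq\right)^{B+1}\le N^{-(\frac{9}{10}-\frac12-2\alpha)(B+1)}.\]
Choosing $B$ with $(0.4-2\alpha)(B+1)>2$, for example $B=6$, gives a bound $\ll N^{-2}$. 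This proves the lemma, provided I also check that one overlapping ball accounts for only $O(1)$ bad vertices. If it could contain many $z_l$, I would instead count pairs (revelation, $z_l$), using the same union bound with the total multiplicity $2n^{2KR}$, which changes only polylogarithmic factors.
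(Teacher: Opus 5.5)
Your main argument is essentially the paper's. The paper bounds the probability that a newly revealed $R$-ball contains some $z_i$ by $\lesssim n^{2RK}n^{2R}N^{-9/10}$ and uses the cap of $N^{1/2+\alpha}$ explored vertices. It then concludes with $\binom{N^{1/2+\alpha}}{B}\bigl(N^{-(9/10-\alpha)}\bigr)^B\le N^{-B(2/5-2\alpha)}$, which is the same as your chronological domination of $H$ by a binomial. There is one small slip: $M=N^{1/2+\alpha}$ is not $\le 2^{0.1n}$. This is harmless, because $\frac{1}{2^{0.9n-1}-s}\le 4N^{-9/10}$ only needs $s\le 2^{0.9n-2}$, which still holds.

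The loose end is your proviso $\mathrm{Bad}\lesssim H$, and the paper's proof leaves it implicit too: it also really bounds the number $H$ of hitting revelations. Your fallback does not close it. Two pairs $(r,z_l)$ and $(r,z_{l'})$ that share a revelation $r$ are far from independent, since a ball landing near a level-$K$ ball can contain many of its vertices. So in the $(B+1)$-fold union bound, configurations with one shared revelation contribute about $Mq\,n^{O(KRB)}=N^{-2/5+o(1)}$, not $o(N^{-2})$. Counting pairs therefore changes the power of $N$, not just polylogarithmic factors.

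What does close it is a structural fact, provided you read the $z_i$ as the centres of the level-$K$ balls. This is how they enter \Cref{coupling_succeeds}, where only $\xi_K^+$, which is always a centre, carries mass. The level-$K$ $R$-balls of a $K$-root neighbourhood are pairwise disjoint, so distinct centres are at green distance $>2R$. A single revealed $R$-ball therefore contains at most one of them, and $\mathrm{Bad}\le H$. If instead every vertex of the level-$K$ balls is counted, one overlap really can make many of them bad. In that case the lemma should be read as the bound on $H$, which is what both your argument and the paper's actually prove.
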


\begin{proof}
At each step of the exploration the probability of the newly revealed $R$-ball containing a $z_i$ is $\lesssim\frac{n^{2RK}n^{2R}}{N^{\frac{9}{10}}}$. This is $\ll\frac{1}{N^{\frac{9}{10}-\alpha}}$ for any $\alpha>0$. So
\[
\prcond{\textrm{Bad}>B}{T_{x,0},T_{y,0}}{}\quad\le\quad\binom{N^{\frac12+\alpha}}{B}\left(\frac{1}{N^{\frac{9}{10}-\alpha}}\right)^B\qquad\le\quad N^{-B\left(\frac25-2\alpha\right)},
\]
which is $\ll\frac{1}{N^2}$ for $\alpha$ sufficiently small and $B$ sufficiently large.
\end{proof}

\begin{lemma}\label{coupling_from_good_succeeds}
For any $\theta\in(0,1)$ for sufficiently large value of $C_R$ and sufficiently large values of $n$, for all $i\in\{1,2,...,L_x\}$, the following holds. Let us consider the coupling of $Y^{(1)}$ and $Z^{(1)}$ from $z_i$, between levels $K$ and $L$, as in \Cref{sec:trunc_Kroot_coupling}. Then we have
\[
\prcond{\text{coupling of }Y^{(1)}\text{ and }Z^{(1)}\text{ succeeds between levels }K\text{ and }L}{\F_{i-1}}{z_i}\quad\ge\quad(1-\theta)\1{\{z_i\text{ good}\}}.
\]
The analogous result holds for $i\in\{L_x+1,...,L_x+L_y\}$ with $Y^{(2)}$ and $Z^{(2)}$ instead of $Y^{(1)}$ and $Z^{(1)}$.
\end{lemma}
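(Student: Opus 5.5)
Working on the event that $z_i$ is good (otherwise there is nothing to prove), we will show that conditionally on $\F_{i-1}$ each of the ways conditions (iv) and (v) can fail has probability at most $\theta/4$, and then take a union bound. The walk $Z^{(1)}$ started at $z_i$ and run until it has crossed $L-K$ long-range edges can fail in four ways: (a) it hits the boundary of an $R$-ball; (b) it crosses a long-range edge truncated by a criterion $\trunc{\cdot}{A}$ or $\truncprime{\cdot}$; (c) it crosses an edge truncated because an optimal coupling failed; (d) it crosses an edge truncated because of an overlap. In addition, (v) can fail if $Y^{(1)}$ backtracks a red edge. The structural fact we use throughout is that, since $z_i$ is good, the exploration of $T_x(z_i)$ starts afresh. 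Conditionally on $\F_{i-1}$, $T_x(z_i)$ is then distributed as a quasi-tree rooted at $z_i$, with the caveat that some of its edges may later be truncated, and $Z^{(1)}$ on it behaves as in \Cref{entropy_concentration}.

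Steps (a) and (v) are direct. Between consecutive long-range crossings the walk takes $\Geomnonneg{\frac{0.1n}{n-1}}$ green steps, so the number of green steps within one ball exceeds $R=C_R\log n$ with probability at most $(0.9)^{R}\le n^{-C_R\log(10/9)}$. A union bound over the $L\lesssim n/\log n$ balls visited gives $o(1)$ once $C_R$ is large. For backtracking, at each red crossing the probability of immediately recrossing the same red edge before leaving the ball is $O(1/n)$. Over $L$ levels this totals $O(L/n)=o(1)$. Step (b) is exactly the preceding lemma applied with $k=L-K$ levels, after choosing $A$ and $C_{L,2}$ large so that each of the two probabilities is at most $\theta/8$. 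The walk only sees the part of $T_x(z_i)$ it visits, so truncating elsewhere is irrelevant.

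Steps (c) and (d) are handled through an expectation. Let $E$ be the set of non-truncated long-range edges explored from $z_i$, weighted by the probability that $Z^{(1)}$ crosses them. For (c), the optimal coupling of $\iota(v)$ with the true red neighbour fails with probability at most about $s/2^{0.9n}$, where $s\le N^{1/2+\alpha}$ is the number of red edges revealed so far. The $R$-ball coupling fails with probability at most $n^{2R}N^{1/2+\alpha}/N^{0.9}$, since each green edge of the ball is a uniform matching between copies of size at least $2^{0.9n-R}$ that is avoided by the at most $N^{1/2+\alpha}$ revealed vertices. Both bounds are $N^{-c}$. For (d), a new ball meets one of the at most $N^{1/2+\alpha}$ explored vertices with probability at most $n^{2R}N^{1/2+\alpha}/N^{0.9}$. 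Summing these failure probabilities along the path of $Z^{(1)}$, that is, over at most $L$ crossings, gives $O(LN^{-c})=o(1)$ in expectation. Markov's inequality then turns this into a bound on the conditional probability.

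The main obstacle is making this last step rigorous. The failure events of the exploration are revealed sequentially and interact with the truncation criteria, which are defined relative to $T_x(z_i)$. The clean approach is to reveal the exploration lazily along the path of $Z^{(1)}$. At each new long-range edge the walk crosses, the conditional failure probability given everything revealed so far is at most $N^{-c}$, uniformly, because the total number of revealed vertices is deterministically at most $N^{1/2+\alpha}$ by the count preceding \Cref{num_bad}. A union bound over the $L$ crossings then gives the bound directly, without having to analyse the whole explored tree. Combining (a)--(d) and (v) yields success probability at least $1-\theta$ on $\{z_i\text{ good}\}$, and the same argument applies verbatim for $i>L_x$ with $Y^{(2)},Z^{(2)}$.
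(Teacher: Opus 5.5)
Your argument is essentially the paper's: a union bound over boundary hits (a geometric tail of green steps, times $L$), backtracking ($O(L/n)$), and overlap or optimal-coupling failures along the at most $L$ crossings, each controlled by the deterministic bound $N^{1/2+\alpha}$ on the number of explored vertices. Your step (b) goes further: it invokes the truncation-criteria lemma (and so needs $A$ and $C_{L,2}$ large) to handle a failure mode that condition (iv) includes but the paper's proof does not mention. One detail needs repair. Your justification of the $R$-ball coupling, via matchings ``between copies of size at least $2^{0.9n-R}$'', is false for low-type green edges. The bound should instead come from the near-uniformity of $\iota(v)$ over at least $2^{0.9n}$ vertices, which makes the expected influence of the revealed vertices on each ball vertex and edge type $\lesssim N^{1/2+\alpha}2^{-0.9n}$; the paper itself only asserts this step.
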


\begin{proof}
In case $z_i$ is good, we know that the $R$-ball of $z_i$ does not intersect any of the already revealed $R$-balls. Now consider the exploration from $z_i$ and the walks from $z_i$.

The probability that the walk $Z^{(j)}$ up to time $\taured^{(L,j)}$ crosses a long-range edge that is truncated due to an overlap is $\lesssim L n^{2R}\frac{L n^{2R} N^{\frac12+\alpha}}{N^{\frac{9}{10}}}\ll1$. Similarly, the probability of $Z^{(j)}$ crossing a long-range edge where the optimal coupling failed is also $\ll1$.

The probability of $Z^{(j)}$ hitting the boundary of an $R$-ball is $\lesssim L\pr{\Geomnonneg{\frac{0.1n-1}{n}}>R}\lesssim\frac{n}{\log n}\left(\frac{9.1}{10}\right)^{C_5\log n}$. This is $\ll1$ for sufficiently large values of $C_R$.

The probability that up to the $L$th time that $Y^{(j)}$ crosses a red edge, it ever backtracks the most recently crossed red edge is $\le L\frac{1}{0.1n}\ll1$.
\end{proof}

\begin{lemma}\label{coupling_up_to_K_succeeds}
For sufficiently large values of $C_R$ the following holds. For any realisation of the first $K$ levels of $T_{x,0}$ and $T_{y,0}$, the probability that the coupling of $Z^{(i)}$ and $Y^{(i)}$ is successful in the first $K$ levels is $1-o(1)$ .
\end{lemma}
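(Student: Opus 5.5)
We have to show that conditions (i), (ii) and (iii) each fail with probability $o(1)$ for $i=1,2$, uniformly over the realisations $T_{x,0},T_{y,0}$. A union bound over the three conditions and the two walks then gives the claim. On the event that all three hold, the coupling already makes $Y^{(i)}_j=\iota(Z^{(i)}_j)$ up to $\taured^{(K,i)}$. So only the marginal law of the walk matters: each step is red with probability $\frac{0.1n}{n-1}$ and green otherwise, and a red step chooses uniformly among the $0.1n$ red edges at the current vertex, or $0.1n-1$ at $\rho$ in the quasi-tree.

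For (i), we note that between consecutive long-range crossings $Z^{(i)}$ takes a number of green steps stochastically dominated by $\Geomnonneg{\frac{0.1n-1}{n}}$. To reach the boundary of an $R$-ball inside one such excursion it must take at least $R$ consecutive green steps. The probability of this in a given excursion is at most $\left(1-\frac{0.1n-1}{n}\right)^{R}\le (0.91)^{C_R\log n}$. A union bound over the $K$ excursions up to $\taured^{(K,i)}$ gives at most $K(0.91)^{C_R\log n}=o(1)$. Since $K=C_K$ is a constant, any $C_R>0$ suffices here; the only real requirement comes from matching the choice of $C_R$ in \Cref{coupling_from_good_succeeds}.

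For (ii), the first red edge crossed by $Y^{(i)}$ is uniform among the $0.1n$ red edges at the vertex $Y^{(i)}$ occupies at that moment. Since (i) holds, that vertex lies in the $R$-ball of $Y^{(i)}_0$. The event fails only if the walk is at $Y^{(i)}_0$ itself and picks its type $0.9n$ edge, which has probability at most $\frac{1}{0.1n}=o(1)$. For (iii), each of the first $K$ red crossings after the first one backtracks the previous red edge only if, at the moment of crossing, the walk sits at the endpoint of that edge and selects it among $\approx 0.1n$ red edges. Each such event has probability at most $\frac{1}{0.1n-1}$, so the total is at most $\frac{K}{0.1n-1}=o(1)$.

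None of these steps is a real obstacle. The only point needing care is making the bounds uniform in $T_{x,0},T_{y,0}$, and this holds because they only use the local degree counts ($0.1n$ red edges, $n-1$ green/red edges per vertex), which are the same for every realisation.
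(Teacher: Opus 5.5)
Your proposal is correct and follows essentially the paper's argument. The paper bounds the failure of (ii) by $\frac{1}{0.1n}$ exactly as you do, and handles (i) and (iii) by the same geometric-tail and backtracking estimates as in the proof of \Cref{coupling_from_good_succeeds}, with $K$ in place of $L$. Your extra observations are valid refinements of that proof: overlap and optimal-coupling failures cannot arise below level $K$ because $T_{x,0}$ and $T_{y,0}$ are given, and any $C_R>0$ suffices because $K$ is a constant.
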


\begin{proof}
The probability that the first red edge crossed by $Y^{(j)}$ is the type $0.9n$ one from $Y^{(j)}_0$ is $\le\frac{1}{0.1n}\ll1$.

The probabilities of the other ways of the coupling failing can be bounded analogously to the proof of \Cref{coupling_from_good_succeeds}.
\end{proof}

\begin{lemma}\label{coupling_succeeds}
For any $\theta\in(0,1)$, for sufficiently large values of the constants $C_R$ and $C_K$, the following holds. For $x$, $y$ and $\{T_{x,0},T_{y,0}\}$ as in~\Cref{num_bad} we have
\[
\prcond{\prcond{\text{the coupling of }Y^{(1)}\text{ and }Z^{(1)}\text{ succeeds}}{\F_{L_x+L_y}}{x}>1-\theta}{T_{x,0},T_{y,0}}{}\quad\ge\quad1-o\left(\frac{1}{N^2}\right).
\]
The analogous result holds for $Y^{(2)}$ and $Z^{(2)}$ starting from $y$.
\end{lemma}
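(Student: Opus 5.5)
We decompose the event that the coupling of $Y^{(1)}$ and $Z^{(1)}$ fails according to where $Z^{(1)}$ first reaches level $K$. Let $k_1$ be the index with $Z^{(1)}_{\taured^{(K,1)}}=z_{k_1}$. The distribution of $k_1$ depends only on $T_{x,0}$, since the walk up to $\taured^{(K,1)}$ stays in the first $K$ levels. The coupling fails only if one of three things happens: the coupling in the first $K$ levels fails, $z_{k_1}$ is bad, or $z_{k_1}$ is good but the coupling between levels $K$ and $L$ fails. Hence, conditional on $\F_{L_x+L_y}$,
\[
\prcond{\text{failure}}{\F_{L_x+L_y}}{x}\le \mathrm{P}_1+\sum_{i\le L_x}\prcond{k_1=i}{T_{x,0}}{x}\left(\1{\{z_i\text{ bad}\}}+\prcond{\text{failure from }z_i}{\F_{L_x+L_y}}{z_i}\1{\{z_i\text{ good}\}}\right),
\]
where $\mathrm{P}_1$ is the probability of failure in the first $K$ levels. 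By \Cref{coupling_up_to_K_succeeds}, $\mathrm{P}_1=o(1)$ deterministically, for any realisation of $T_{x,0},T_{y,0}$.

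\textbf{Bad vertices.} By \Cref{num_bad}, outside an event of probability $o(N^{-2})$ there are at most $B$ bad $z_i$. For each fixed $i$, $\prcond{k_1=i}{T_{x,0}}{x}$ is small once $K$ is large. Reaching level $K$ at a particular $z_i$ requires choosing, at each of the $K$ long-range crossings, one specific edge among $\Theta(n)$ nearly equally likely options, so the probability is $\lesssim n^{-K}\cdot\mathrm{poly}$. A crude bound $\lesssim 1/n$ per vertex also suffices. So the bad contribution is at most $B\max_i\prcond{k_1=i}{}{}=o(1)$.

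\textbf{Good vertices.} This is the main obstacle. \Cref{coupling_from_good_succeeds} gives only a bound on the failure probability conditional on $\F_{i-1}$. That bound averages over the exploration from $z_i$ and later ones, whereas we need a bound that holds with very high probability after conditioning on all of $\F_{L_x+L_y}$. Let $F_i:=\prcond{\text{failure from }z_i}{\F_{L_x+L_y}}{z_i}\1{\{z_i\text{ good}\}}$. The failure from $z_i$ depends only on the exploration from $z_i$, together with the overlap checks against previously explored sets. Later explorations do not affect it, so $F_i$ is $\F_i$-measurable.

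By \Cref{coupling_from_good_succeeds} (applied with $\theta/4$), $\E{F_i\mid\F_{i-1}}\le\theta/4$. Define the weights $p_i=\prcond{k_1=i}{T_{x,0}}{x}$, which are deterministic given $T_{x,0}$, sum to $1$, and satisfy $\max_i p_i\lesssim n^{-cK}$. Then $M_j=\sum_{i\le j}p_i(F_i-\E{F_i\mid\F_{i-1}})$ is a martingale with increments bounded by $p_i$. Azuma--Hoeffding gives
\[
\pr{\sum_i p_iF_i>\theta/2}\le\exp\left(-c\theta^2/\sum_ip_i^2\right)\le\exp\left(-c\theta^2/\max_ip_i\right).
\]
We need this to be $o(N^{-2})=o(4^{-n})$, which requires $\max_ip_i\ll 1/n$. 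The crude bound $n^{-K}$ is too weak for this, so the plan is to strengthen it. The levels-$1..K$ edges are red edges chosen among $0.1n$ types, and the walk in a green ball takes a geometric number of steps spread over up to $n^{R}$ vertices. The aim is to show that $p_i\le \exp(-cK\log n)$ is not enough by itself, and instead to exploit the fact that the number of level-$K$ vertices reachable is huge. I would try to bound $\max_ip_i\lesssim 1/n^{K}$ and combine this with an entropy bound, as in \Cref{entropy_concentration}, giving $-\log p_i\gtrsim Kc_1\log n$.

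If $n^{-cK}$ still falls short of $n^{-1}\cdot n$, the fallback is to split by level-one ancestors. Group the $z_i$ by their level-$1$ ancestor edge. The explorations under distinct groups are nearly independent, and each group has weight $\lesssim 1/n$, while the walk picks among $\Theta(n)$ groups. Apply a Bernstein or Chernoff bound across groups, using that each good exploration fails with probability at most $\theta/4$, with the failures behaving as a supermartingale. This gives a tail of $\exp(-cn)$, which with constants adjusted is $o(N^{-2})$.

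Combining the three terms, total failure is at most $o(1)+o(1)+\theta/2<\theta$ off an event of probability $o(N^{-2})$. The argument for $Y^{(2)}$ from $y$ is symmetric.
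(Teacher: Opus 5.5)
Your argument is correct and is essentially the paper's proof. The paper defines $h(z_i)=\mathbb{P}_x(\xi_K^+=z_i\mid\F_{L_x+L_y})$ and bounds the $h$-mass of the set $\widehat V$ of level-$K$ vertices from which the conditional failure probability is at least $\theta$. It does this by the martingale concentration argument of \cite[Proposition 5.7]{random_matching}, which is where \Cref{num_bad} and \Cref{coupling_from_good_succeeds} enter, using only $h(z_i)\le(0.1n-1)^{-K}\ll n^{-2}$. The one difference is that you skip the thresholding and apply Azuma--Hoeffding directly to the weighted sum $\sum_i p_iF_i$; that works equally well.

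However, your claim that the crude bound is ``too weak'' is a miscalculation. At each of the $K$ long-range crossings the specific edge leading towards $z_i$ is chosen with probability at most $\frac{1}{0.1n-1}$, so $\max_i p_i\le(0.1n-1)^{-K}\ll\frac1n$ as soon as $K\ge2$. Your own Azuma bound is then already $\exp\left(-c\theta^2(0.1n-1)^K\right)=o(4^{-n})$, and the proof is complete.

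You should therefore drop the proposed strengthening via entropy and the level-one grouping fallback. The fallback would actually be weaker: with $\Theta(n)$ groups of weight $\asymp\frac1n$, the tail is only of the form $\exp(-c(\theta)n)$. That beats $4^{-n}$ only if $c(\theta)>2\log2$, which fails for small $\theta$ without extra work.
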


\begin{proof}
Let
\[
\widehat{V}:=\quad\left\{z_i:\:\prcond{\xi_K^+=z_i,\text{ coupling of }X^{(1)}\text{ and }Z^{(1)}\text{ fails}}{\F_{L_x+L_y}}{x}\ge\theta\right\},
\]
\[h(z_i):=\quad\prcond{\xi_K^+=z_i}{\F_{L_x+L_y}}{x},\]
where $\xi_K^+$ denotes the endpoint of $\xi_K$ further from the root.

We will show that
\begin{align}\label{eq:bound_Vhat}
\prcond{h(\widehat{V})>3\theta}{T_{x,0},T_{y,0}}{}\quad\ll\quad\frac{1}{N^2}.
\end{align}

Once we have this, the proof is immediate (we have to replace $\theta$ by $\frac15\theta$).

The proof of~\eqref{eq:bound_Vhat} is analogous to the proof of~\cite[Proposition 5.7]{random_matching}.
We use that $h(z_i)\le\left(\frac{1}{0.1n-1}\right)^K\ll\frac{1}{n^2}$.
\end{proof}

\section{Bounding the hit time}\label{sec:hit}

In this section we conclude the proof of \Cref{transition_probs}, and then the proof of \Cref{hit_upper_bound}.

\subsection{Proof of \Cref{transition_probs}}

For $u,v\in\VKroot$ in different type $(n-1)$ subgraphs of $\Gn$, let $\F_{u,v}=\F_{L_u+L_v}$ be the $\sigma$-algebra generated by the explorations around them up to level $L$ and let\\ $\Omega_{u,v}=\{\text{coupling of }(Y^{(1)},Y^{(2)})\text{ and }(Z^{(1)},Z^{(2)})\text{ from }(u,v)\text{ succeeds}\}$.

\begin{lemma}\label{prob_between_Kroots}
For any $\theta\in(0,1)$, there exists $\theta'\in(0,1)$ with the following property. For any $u$ and $v$ in different type $(n-1)$ subgraphs of $\Gn$, on the event that $u,v\in\VKroot$ and $\prcond{\Omega_{u,v}}{\F_{u,v}}{}>\theta$, we have
\begin{align*}
\prcond{\sum_{w,z}\prcond{Y^{(1)}_{\taured^{(L,1)}-1}=w,Y^{(2)}_{\taured^{(L,2)}-1}=z,\Omega_{u,v}}{\Gn}{(u,v)}\1{\eta(w)=z}<\frac{\theta'}{N}}{\F_{u,v}}{}\quad\ll\quad\frac{1}{N^2}\:.
\end{align*}
\end{lemma}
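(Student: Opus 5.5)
The plan is to condition on $\F_{u,v}$ and use only the randomness of the blue matching, which is independent of the green and red edges revealed in the explorations. Write $\mu_u(w):=\prcond{Y^{(1)}_{\taured^{(L,1)}-1}=w,\,\Omega^{(1)}}{\Gn}{u}$ and $\mu_v(z)$ analogously, where $\Omega^{(i)}$ is the success event for the $i$th walk. Since the two walks are independent, the quantity in the statement equals
\[S:=\sum_{w,z}\mu_u(w)\mu_v(z)\1{\eta(w)=z}.\]
On $\Omega_{u,v}$ the walks agree with $Z^{(1)},Z^{(2)}$ on the explored, non-truncated parts. Hence $\mu_u$ and $\mu_v$ are determined by $\F_{u,v}$, are supported on $\iota$ of explored vertices, and have total masses $\mu_u(\Vn)\mu_v(\Vn)=\prcond{\Omega_{u,v}}{\F_{u,v}}{}>\theta$. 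The blue edges form a uniform perfect matching between the two type $(n-1)$ subgraphs, independent of $\F_{u,v}$. Here $w$ ranges over the subgraph of $u$ and $z$ over that of $v$.

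First we compute the mean: $\E{S\mid\F_{u,v}}=\frac{2}{N}\mu_u(\Vn)\mu_v(\Vn)\ge\frac{2\theta}{N}$. We then aim for concentration at lower-tail level $N^{-2}$, following \cite[Lemma 5.1]{cutoff_NBRW_on_sparse_random_graphs}, which is a concentration inequality for $\sum_w f(w,\eta(w))$ under a uniform matching when the summands are bounded by $\|f\|_\infty$. The required input is a uniform bound on the masses.

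The key step is to show $\max_w\mu_u(w)\le N^{-1/2}e^{-c'\sqrt{\log N\log\log N}}$ for some constant $c'>0$, and similarly for $\mu_v$. This is where $\truncprime{\cdot}$ and the choice of $L$ enter. Every non-truncated level-$(L-K)$ long-range edge $e$ of $T_u(z_i)$ satisfies $\pr{\xi_{L-K}=e}\le N^{-1/2}e^{-\sqrt{\log N\log\log N}}$. The vertex $Y_{\taured^{(L)}-1}$ is the lower endpoint of the $L$th red edge, and several level-$L$ edges share that endpoint. Summing over them costs at most a factor $0.1n$, and also summing over the level-$K$ starting points $z_i$ (weights summing to at most $1$) keeps the bound. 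Since $n=\log_2N$, the factor $n$ is absorbed by $e^{-\sqrt{\log N\log\log N}}$, with room to spare.

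With $\|f\|_\infty\le\max\mu_u\cdot\max\mu_v\le N^{-1}e^{-2c'\sqrt{\log N\log\log N}}$, the inequality gives
\[\pr{S<\E{S}-t}\le\exp\!\left(-\frac{ct^2}{\|f\|_\infty\,\E{S}}\right).\]
Taking $t=\E{S}/2$, the exponent is of order $\E{S}/\|f\|_\infty\gtrsim\theta e^{2c'\sqrt{\log N\log\log N}}\gg\log N$, so the probability is $\ll N^{-2}$. The conclusion holds with $\theta'=\theta$.

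The main obstacle is the concentration step. A bounded-difference argument over switchings of the matching gives a variance proxy $\sum_w\mu_u(w)\max_z\mu_v(z)$, and this must be of size $\E{S}\cdot\|f\|_\infty N$ rather than merely $\|f\|_\infty^2 N$. If the cited lemma is not directly applicable, I would first try to reprove it via Chatterjee's exchangeable-pairs method for random matchings, since that method yields exactly this Bernstein-type variance bound.
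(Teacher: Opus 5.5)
Your proposal is correct and follows the paper's proof essentially step for step. You condition on $\F_{u,v}$, use $\truncprime{\cdot}$ to bound every single-vertex mass by $N^{-1/2}$ times a factor smaller than any power of $\log N$, and apply the Bernstein-type lower-tail bound $\exp(-a^2/(4bm))$, adapted from \cite[Lemma 5.1]{cutoff_NBRW_on_sparse_random_graphs}, to the uniform blue matching with $a=m/2$. The paper passes over the factor $0.1n$ coming from several red edges sharing the lower endpoint $w$, simply asserting the bound $\frac{1}{\sqrt N\log N}$; your accounting of it is correct, and it is absorbed exactly as you describe.
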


\begin{proof}
By the definition of $\truncprime{\cdot}$ we have \[\prcond{Y^{(1)}_{\taured^{(L,1)}-1}=w,\Omega_{u,v}}{\F_{u,v}}{v}\le\exp\left(-\frac12\log N-\sqrt{(\log N)(\log\log N)}\right)<\frac{1}{\sqrt{N}\log N}\] for all $w$, and similarly $\prcond{Y^{(2)}_{\taured^{(L,2)}-1}=z,\Omega_{u,v}}{\F_{u,v}}{v}<\frac{1}{\sqrt{N}\log N}$ for all $z$. Also, conditional on $\F_{u,v}$, the type $n$ blue edges form a uniform random matching between the two type $(n-1)$ subgraphs.

For each $i$ and $j$ in these type $(n-1)$ subgraphs let $w_{i,j}=\prcond{Y^{(1)}_{\taured^{(L,1)}-1}=i,Y^{(2)}_{\taured^{(L,2)}-1}=j,\Omega_{u,v}}{\F_{u,v}}{(u,v)}$. Also, let $m=2a=\frac{1}{2^{n-1}}\sum_{i,j}w_{i,j}\asymp\frac{1}{N}$ and $b=\max_{i,j}w_{i,j}\le\frac{1}{N(\log N)^2}$. Then with a proof analogous to the proof of~\cite[Lemma 5.1]{cutoff_NBRW_on_sparse_random_graphs} we get that
\begin{align*}
\prcond{\sum_{i}w_{i,\eta_n(i)}\le m-a}{\F_{u,v}}{}\quad\le\quad\exp\left(-\frac{a^2}{4bm}\right)\quad\ll\quad\frac{1}{N^2}.
\end{align*}
Note that the probability $\prcond{Y^{(1)}_{\taured^{(L,1)}-1}=w,Y^{(2)}_{\taured^{(L,2)}-1}=z,\Omega_{u,v}}{\Gn}{(u,v)}$ is $\F_{u,v}$-measurable, so we can change the conditioning on $\Gn$ to a conditioning on $\F_{u,v}$. This finishes the proof.
\end{proof}

Let us choose $C_{L,2}$, $C_R$ and $C_K$ such that Lemmas~\ref{hit_Kroot} and~\ref{coupling_succeeds} are satisfied.

Let $\theta_1$ be the constant from \Cref{hit_Kroot}, let $\theta_2$ be an arbitrary constant in $(0,1)$ and let $\theta_3$ be the corresponding constant $\theta'$ from \Cref{prob_between_Kroots}.

Let $\Omega_0$ be the high probability event that $\Gn$ satisfies all of the following.
\begin{itemize}
\item For all $q$ we have $\prcond{Y_{\taured^K}\in \VKroot}{\Gn}{q}\ge\theta_1$. (See \Cref{hit_Kroot}.)
%\item $|\VKroot|\ge\frac12N$ for all sufficiently large $n$. (See~\Cref{many_Kroots}.)
\item For all $u,v\in\VKroot$ in different type $(n-1)$ subgraphs, we have $\prcond{\Omega_{u,v}}{\F_{u,v}}{}>\theta_2$. (See~\Cref{coupling_succeeds}.)
\item  For all $u,v\in\VKroot$ in different type $(n-1)$ subgraphs with $\prcond{\Omega_{u,v}}{\F_{u,v}}{}>\theta_2$, we have $\sum_{w,z}\prcond{Y^{(1)}_{\taured^{(L,1)}-1}=w,Y^{(2)}_{\taured^{(L,2)}-1}=z,\Omega_{u,v}}{\Gn}{(u,v)}\1{\eta(w)=z}\ge\frac{\theta_3}{N}$. (See~\Cref{prob_between_Kroots}.)
\end{itemize}

\begin{proof}[Proof of \Cref{transition_probs}]
Let us work on the high probability event $\Omega_0$.

Consider any $x$ and $y$ that are in different type $(n-1)$ subgraphs. Let $\ell$ be such that $x$ is in $G^{(n-1,\ell)}$. Then by~\eqref{eq:sum} we have
\begin{align*}
&\prcond{\til{X}_{\tau}=y}{\Gn}{x}\\
&\ge\quad\left(\inf_{q\in G^{(n-1,\ell)}} \prcond{Y_{\taured^K}\in\VKroot}{\Gn}{q}\right) \prcond{Y_{\taured^K}\in\VKroot}{\Gn}{y}\\
&\hspace{2cm}\cdot\left(\inf_{\substack{u\in\VKroot\cap G^{(n-1,\ell)}\\ v\in\VKroot\cap G^{(n-1,3-\ell)}}} \sum_{w,z}\prcond{Y_{\taured^L-1}=w}{\Gn}{u}\prcond{Y_{\taured^L-1}=z}{\Gn}{v}\1{\eta(w)=z}\right).
\end{align*}
By the definition of $\Omega_0$ this is $\ge\theta_1^2\theta_3\frac1N$. This finishes the proof.
\end{proof}

\subsection{Proof of \Cref{hit_upper_bound}}

%\begin{proof}[Proof of~\Cref{hit_upper_bound}]
Let $\theta$ be as in \Cref{transition_probs} and let us work on the corresponding high probability event. Then we have
\begin{align*}
1-\dtv{\prcond{\til{X}_{\tau}=\cdot}{\Gn}{x}}{\U(\cdot)}\quad=\quad\sum_{y}\frac1N\wedge\prcond{\til{X}_{\tau}=y}{\Gn}{x}\quad\ge\quad N\cdot\frac{\theta}{N}=\quad\theta,
\end{align*}
hence, also using \Cref{couple_X_Xtil} (with $\frac12\theta$), we get
\begin{align*}
\dtv{\prcond{X_{\tau}=\cdot}{\Gn}{x}}{\U(\cdot)}\quad\le\quad\prcond{X_{\tau}\ne\til{X}_{\tau}}{\Gn}{x}\quad+&\quad\dtv{\prcond{\til{X}_{\tau}=\cdot}{\Gn}{x}}{\U(\cdot)}\\
&\qquad\le\quad\frac12\theta+\left(1-\theta\right)\quad=\quad1-\frac12\theta.
\end{align*}

From \Cref{tau_upper_bound} we know that there exists $C$ such that $\prcond{\tau>Cn}{\Gn}{x}\le\frac{1}{4}\theta$ for all $x$.

Let $\alpha=1-\frac{1}{8}\theta$ and let us consider any $x\in\Vn$ and $A\subseteq\Vn$ with $\U(A)\ge\alpha$.

Then $\prcond{\tau_A>Cn}{\Gn}{x}\le\prcond{\tau>Cn}{\Gn}{x}+\prcond{\tau_A>\tau}{\Gn}{x}$. We have $\prcond{\tau>Cn}{\Gn}{x}\le\frac{1}{4}\theta$  and $\prcond{\tau_A>\tau}{\Gn}{x}\le \prcond{X_\tau\not\in A}{\Gn}{x}\le \U(A^c)+\dtv{\prcond{X_{\tau}=\cdot}{\Gn}{x}}{\U(\cdot)}\le(1-\alpha)+\left(1-\frac18\theta\right)$. So overall we get that $\prcond{\tau_A>Cn}{\Gn}{x}\le1-\frac{1}{8}\theta$

This shows that $\hit{1-\frac{1}{8}\theta}{1-\frac{1}{8}\theta}\le Cn$, and so finishes the proof.
%\end{proof}

\section{Bounding the absolute relaxation time}\label{sec:trelabs}

In this section we prove \Cref{trelabs_order}.

We will prove that whp the graph $\Gn$ is such that for any partition $(A,B)$ of the vertices, at least a constant proportion of the edges of $\Gn$ run within $A$ or within $B$. Then we show that this property implies $\trelabs\asymp\trel$.

First we prove the following lemma.

\begin{lemma}\label{three_levels}
There exists a positive constant $c$ with the following property.

Let $m\in\Zpos$ and let $V_1$, $V_2$, ... $V_8$ be disjoint sets of size $m$ each. Let us consider a random graph $H$ with vertex set $V=\bigcup_{i=1}^{8}V_i$ and edge set $E$ consisting of a uniformly chosen perfect matching between $V_{2i-1}$ and $V_{2i}$ for $i=1,2,3,4$, a uniform perfect matching between $V_{4i-3}\cup V_{4i-2}$ and $V_{4i-1}\cup V_{4i}$ for $i=1,2$ and a uniform perfect matching between $\bigcup_{i=1}^{4}V_i$ and $\bigcup_{i=5}^{8}V_i$. (These matchings are independent of each other.)

Then with probability $\ge 1-2^{-\frac14m}$ the graph $H$ is such that for any partition $(A,B)$ of $V$ we have $|E(A)|+|E(B)|\ge c|E|$. Here $E(A)$ denotes the set of edges with both endpoints in $A$, and $E(B)$ is defined analogously.
\end{lemma}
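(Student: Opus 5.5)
We argue by contradiction. The bad event is that $H$ is "nearly bipartite": there is a partition $(A,B)$ of $V$ for which fewer than $c|E|=4cm$ edges lie inside $A$ or inside $B$. Here $|E|=4m+4m+4m=12m$, since each of the three levels contributes a perfect matching on all $8m$ vertices. We fix a small constant $\delta$ (with $c$ chosen in terms of $\delta$) and bound the probability of the bad event with a union bound over "coarse shapes" of the partition, in the spirit of a first moment computation.

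\emph{Step 1 (reduce to structured partitions).} Suppose $(A,B)$ has fewer than $\delta m$ monochromatic edges. Write $a_i=|A\cap V_i|/m$. Since fewer than $\delta m$ level-1 matching edges between $V_{2i-1}$ and $V_{2i}$ are monochromatic, the matching is almost colour-reversing, so $a_{2i}\approx 1-a_{2i-1}$ up to $\delta$. The key observation is that one matching cannot force this unless the sets are nearly balanced, so we condition on all but one level of matchings and use the randomness of the remaining one. Concretely, we fix $A$, which has at most $2^{8m}$ choices, and compute the probability that all three independent uniform matchings are nearly colour-reversing.

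\emph{Step 2 (probability estimate for a fixed $A$).} A uniform perfect matching between sets $S$ and $S'$ of size $s$, with $|A\cap S|=\alpha s$ and $|A\cap S'|=\beta s$, has fewer than $\delta s$ monochromatic edges only if the number of $A$--$A$ pairs is small. The number of such pairs is hypergeometric with mean $\alpha\beta s$, and similarly the number of $B$--$B$ pairs has mean $(1-\alpha)(1-\beta)s$. If $\alpha+\beta$ is far from $1$, one of these counts is forced to be linear by pigeonhole. If $\alpha+\beta\approx 1$, the probability that both counts are $\le\delta s$ is at most $\exp(-s\,(H(\alpha)-o_\delta(1)))$-ish, because a matching sending $A\cap S$ almost entirely into $B\cap S'$ is a fraction roughly $\binom{s}{\alpha s}^{-1}$ of all matchings. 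So each level gives a penalty of about $2^{-sH(\cdot)}$ in terms of the imbalance profile.

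\emph{Step 3 (combine over levels).} The union bound over $A$ costs roughly $\prod_i\binom{m}{a_im}$. Against this we gain the product of the penalties from the three levels, with $s=m$, $2m$ and $4m$. If every $a_i$ is near $1/2$, the entropy cost is about $2^{8m}$, while the gains are about $2^{-4m}\cdot 2^{-4m}\cdot 2^{-4m}=2^{-12m}$. Indeed, level 1 has four matchings each costing $\binom{m}{m/2}^{-1}\approx 2^{-m}$; level 2 has two matchings costing $2^{-2m}$ each; level 3 has one matching costing $2^{-4m}$. This leaves a net bound of $2^{-4m}$, comfortably better than $2^{-m/4}$. For unbalanced profiles, the near-bipartite constraint at level 1 forces $a_{2i}\approx 1-a_{2i-1}$, which makes the union of each pair balanced, so levels 2 and 3 always see density $\approx 1/2$ and contribute the full $2^{-2m}$ and $2^{-4m}$ factors. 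The level-1 gain matches the level-1 cost $\binom{m}{a_{2i-1}m}$, so the total is at most $\mathrm{poly}(m)\,2^{-6m+o_\delta(m)}$ times the remaining freedom.

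\emph{Main obstacle.} The hard step is making the entropy bookkeeping in Step 3 rigorous uniformly over all profiles $(a_i)$, with the $o_\delta(1)$ errors controlled. The cleanest route is to count the triples (partition, three matchings) that are nearly colour-reversing by choosing sequentially. First choose $A\cap V_1,A\cap V_3,A\cap V_5,A\cap V_7$. Then choose the level-1 matchings, which determine $A\cap V_{2i}$ up to $\delta m$ errors, at a cost of $\binom{m}{\delta m}^{O(1)}$. Then count the level-2 and level-3 matchings that are nearly reversing on balanced sets. Dividing by the total number of matchings, and choosing $\delta$ small so that $\binom{m}{\delta m}^{O(1)}\le 2^{m/2}$, gives a failure probability of at most $2^{-m/4}$ for large $m$. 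Small $m$ is absorbed by shrinking $c$, since any partition with $c|E|<1$ monochromatic edges means there are none. In that case $H$ would have to be bipartite, and the odd-cycle case is handled by the same count. Finally we set $c=\delta/12$.
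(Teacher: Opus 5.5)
Your argument is correct in outline, but it takes a genuinely different route from the paper's.

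\textbf{Your route.} You run a plain first-moment bound over all $\le 2^{8m}$ partitions, using that for fixed $A$ all seven matchings are independent. The level-1 factor $\binom{m}{|A\cap V_{2i}|}^{-1}2^{\varepsilon(\delta)m}$ cancels the choice of $A\cap V_{2i}$. It also forces $|A\cap(V_{2i-1}\cup V_{2i})|=m\pm\delta m$. Hence the two level-2 matchings and the level-3 matching each pay essentially the full $\binom{2m}{m}^{-1}$, $\binom{2m}{m}^{-1}$ and $\binom{4m}{2m}^{-1}$. This leaves $\mathrm{poly}(m)\,2^{4m-8m+\varepsilon(\delta)m}$.

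\textbf{The paper's route.} The paper never sums over full partitions; it union-bounds only over $A_0=A\cap(V_1\cup V_2)$, which has $\le 2^{2m}$ choices. It defines the colourings $\tilde A$ of $V_1\cup\dots\cup V_4$ and $\hat A$ of $V_5\cup\dots\cup V_8$ that are forced by $A_0$ along level-2 and level-3 edges. A deterministic case analysis then shows that a partition deviating from the forced one on more than about $\delta m$ vertices already has $\ge\delta m$ monochromatic edges. The only probabilistic input is that the derived matchings $\sigma$ on $V_1\cup V_2$ and $\eta$ on $V_1\cup\dots\cup V_4$ are uniform and independent. To this it adds a cited estimate (Lemma 6.5 of the random-matching paper) on a uniform matching putting few pairs inside a near-half set. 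This gives $2^{2m}\cdot 2^{-\frac34 m}\cdot 2^{-\frac32 m}=2^{-m/4}$ for every $m$ at once.

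\textbf{What each buys.} The paper trades your profile-by-profile entropy bookkeeping for a short forcing argument plus a cited lemma. Yours is self-contained, using only hypergeometric and binomial estimates. It exploits all the randomness and gives the stronger rate $2^{-(4-o_\delta(1))m}$. The price is a separate small-$m$ step.

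\textbf{Small fixes.}
\begin{enumerate}
\item $c|E|=12cm$, not $4cm$.
\item In Step 3 the level-2/3 factor is $2^{-8m}$ (there are two level-2 matchings), not $2^{-6m}$. This matches your own balanced-case count.
\item The final choice $c=\delta/12$ conflicts with ``shrinking $c$'' for small $m$. Take $c=\min\{\delta/12,1/(12m_0)\}$, where $m_0=m_0(\delta)$ is the threshold beyond which your asymptotic bound is at most $2^{-m/4}$. For $m<m_0$ the condition is then exactly non-bipartiteness. The expected number of proper $2$-colourings, $2^{4m}\binom{2m}{m}^{-2}\binom{4m}{2m}^{-1}$, equals $2/3$ at $m=1$ and shrinks by a factor $\le 16/105$ per step. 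It is therefore below $2^{-m/4}$ for all $m\ge 1$, which makes your ``same count'' remark rigorous.
\end{enumerate}
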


\begin{proof}
Consider a partition $(A,B)$ of $V$. Let $A_0:=A\cap(V_1\cup V_2)$, $B_0:=B\cap(V_1\cup V_2)$. Let $\til{A}$ consist of $A_0$ and the neighbours of $B_0$ in $V_3\cup V_4$. Let $\til{B}=\left(\bigcup_{i=1}^{4}V_i\right)\setminus\til{A}$. Let $\ha{A}$ consist of the neighbours of $\til{B}$ in $\bigcup_{i=5}^{8}V_i$, and let $\ha{B}=\left(\bigcup_{i=5}^{8}V_i\right)\setminus\ha{A}$.
Fix some constant $\delta\in\left(0,\frac14\right)$ such that $C(\delta)$ in~\cite[Lemma 6.5]{random_matching} satisfies $C(\delta)\le\frac18$.

We will consider the following events.
\begin{align*}
\Omega_1:=&\quad\left\{|A_0|\not\in\left(\frac12-\delta,\frac12+\delta\right)|V_1\cup V_2|\right\},\\
\Omega_2:=&\quad\left\{\ge2\delta m\text{ edges run between }\til{B}\cap V_3\text{ and }\til{B}\cap V_4\right\},\\
\Omega_3:=&\quad\left\{\ge4\delta m\text{ edges run between }\ha{B}\cap(V_5\cup V_6)\text{ and }\ha{B}\cap(V_7\cup V_8)\right\}.
\end{align*}
Note that these events only depend on $A_0$ and the graph $H$. We will show that if any of them holds, then for any choice of $(A,B)$ with the given $A_0$, there are $\ge\delta m$ edges with both endpoints in $A$ or both endpoints in $B$. Then we will use~\cite[Lemma 6.5]{random_matching} to bound the probability of these events failing for a given $A_0$, and take a union bound over the possible choices of $A_0$.

If $\Omega_1$ holds, then by pigeonhole principle at least $\delta m$ edges of the matching between $V_1$ and $V_2$ have both endpoints in $A_0$ or both endpoints in $B_0$.

If $\Omega_2$ holds and $|\til{B}\cap A\cap(V_3\cup V_4)|\le\delta m$ then at least $\delta m$ of the edges between $\til{B}\cap V_3$ and $\til{B}\cap V_4$ have both endpoints in $B$. If $\Omega_2$ holds and $|\til{B}\cap A\cap(V_3\cup V_4)|>\delta m$ then there are at least $\delta m$ edges between $A_0$ and $\til{B}\cap A\cap(V_3\cup V_4)$. So in either case there are $\ge\delta m$ edges with both endpoints in $A$ or both endpoints in $B$.

If $\Omega_3$ holds and $|\ha{B}\cap A|\le2\delta m$ then at least $2\delta m$ of the edges between $\ha{B}\cap(V_5\cup V_6)$ and $\ha{B}\cap(V_7\cup V_8)$ have both endpoints in $B$. If $\Omega_3$ holds and $|\ha{B}\cap A|>2\delta m$ then there are at least $2\delta m$ edges between $\til{A}$ and $\ha{B}\cap A$. If $|\til{A}\cap B|\le \delta m$ then at least $\delta m$ of these edges have both endpoints in $A$. If $|\til{A}\cap B|>\delta m$ then there are at least $\delta m$ edges between $B_0$ and $\til{A}\cap B$. So in all cases there are $\ge\delta m$ edges with both endpoints in $A$ or both endpoints in $B$.

In what follows assume that $|A_0|\in\left(\frac12-\delta,\frac12+\delta\right)|V_1\cup V_2|$. Also note that $|\til{A}|=\frac12\left|\bigcup_{i=1}^{4}V_i\right|$.

Let $\sigma$ be a matching on $V_1\cup V_2$ that matches vertices $x$ and $y$ if and only if their neighbours in $V_3\cup V_4$ are connected by an edge. Note that $\sigma$ is a uniformly chosen perfect matching on $V_1\cup V_2$. (The uniform randomness of $\sigma$ comes from the randomness of edges between $V_1\cup V_2$ and $V_3\cup V_4$, for any realisation of the other edges.) By~\cite[Lemma 6.5]{random_matching} we know that with probability $\ge1-2^{-2m\left(\frac12-C(\delta)\right)}$ there are at least $2\delta m$ pairs in $\sigma$ with both ends in $A_0$, i.e. $\Omega_2$ holds.

Now consider a matching $\eta$ of $\bigcup_{i=1}^{4}V_i$ that matches vertices $x$ and $y$ if and only if their neighbours in $\bigcup_{i=5}^{8}V_i$ are connected by an edge between $V_{5}\cup V_{6}$ and $V_{7}\cup V_{8}$. This is a uniform perfect matching on $\bigcup_{i=1}^{4}V_i$, independently of $\sigma$. By~\cite[Lemma 6.5]{random_matching} we know that with probability $\ge1-2^{-4m\left(\frac12-C(\delta)\right)}$ there are at least $4\delta m$ pairs in $\eta$ with both ends in $\til{A}$, i.e.\ $\Omega_3$ holds.

Taking a union bound over the $\le2^{2m}$ choices of $A_0\subseteq(V_1\cup V_2)$ with $|A_0|\in\left(\frac12-\delta,\frac12+\delta\right)|V_1\cup V_2|$ finishes the proof.
%So for any $A_0\subseteq(V_1\cup V_2)$ with $|A_0|\in\left(\frac12-\delta,\frac12+\delta\right)|V_1\cup V_2|$, the probability of both $\Omega_1$ and $\Omega_2$ failing is $\le2^{-6m\left(\frac12-C(\delta)\right)}$. There are $\le 2^{2m}$ choices of such $A_0$, so taking a union bound over them finishes the proof.
\end{proof}

We use the above lemma for different subgraphs of $\Gn$ to show the following.

\begin{lemma}\label{lemma:edge_inner_boundary} There exists a positive constant $c'$ with the following property.
With high probability $\Gn$ is such that for any partition $(A,B)$ of its vertices we have $|E(A)|+|E(B)|\ge c'|E|$.
\end{lemma}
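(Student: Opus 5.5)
We will apply \Cref{three_levels} to many disjoint blocks of $\Gn$ simultaneously, using three consecutive edge types at a time. The edge set of $\Gn$ is the disjoint union over $k\in\{1,\dots,n\}$ of the type $k$ edges, and each type contributes exactly $N/2$ edges, so $|E|=nN/2$. We will group the types into consecutive triples $\{k,k+1,k+2\}$. For each triple with $k\ge k_0$, where $k_0$ is a large constant or a slowly growing function such as $k_0=4\log_2 n$, we will show that the type $k$, $k+1$, $k+2$ edges contribute at least $c\cdot\frac{3N}{2}$ edges to $E(A)\cup E(B)$. Summing over the roughly $n/3$ triples then gives $|E(A)|+|E(B)|\ge c\,(n-k_0-2)N/2\ge c'|E|$.

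Fix a triple $\{k,k+1,k+2\}$. Each type $(k+2)$ subgraph $G^{(k+2,\ell)}$ consists of eight type $(k-1)$ subgraphs, each of size $m=2^{k-1}$. Call them $V_1,\dots,V_8$, ordered according to the labelling in \Cref{def:twisted_hypercube}. Inside $G^{(k+2,\ell)}$, the type $k$ edges form uniform perfect matchings between $V_{2i-1}$ and $V_{2i}$. The type $k+1$ edges form uniform perfect matchings between $V_{4i-3}\cup V_{4i-2}$ and $V_{4i-1}\cup V_{4i}$. The type $k+2$ edges form a uniform perfect matching between $\bigcup_{i\le4}V_i$ and $\bigcup_{i\ge5}V_i$. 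All these matchings are independent of each other and of the internal structure of the $V_i$.

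The subgraph of $G^{(k+2,\ell)}$ formed by only these three edge types is therefore exactly the random graph $H$ of \Cref{three_levels}. With probability at least $1-2^{-m/4}$, every partition of its vertex set has at least $c$ times its $12m$ edges inside parts. The restriction of any partition $(A,B)$ of $\Vn$ to $G^{(k+2,\ell)}$ is a partition of that block. So on the good event, the type $k,k+1,k+2$ edges inside this block contribute at least $12cm$ edges to $E(A)\cup E(B)$. Summing over the $2^{n-k-2}$ blocks gives $c\cdot 3N/2$ from this triple, as required.

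The main point to check is the union bound over all blocks and all triples. There are $2^{n-k-2}$ blocks at level $k$, so the failure probability is at most
\[
\sum_{k\ge k_0}2^{n-k-2}\,2^{-2^{k-1}/4}.
\]
This is not small for small $k$, because $2^n$ is huge. We therefore restrict to $k$ with $2^{k-3}\ge 2n$, i.e.\ $k\ge k_0:=\log_2 n+4$. Then each term is at most $2^{n}2^{-2n}=2^{-n}$, and the sum over at most $n$ values of $k$ is $n2^{-n}=o(1)$. Discarding the types below $k_0$ loses only $O(\log n)$ of the $n$ types, so $|E(A)|+|E(B)|\ge c(n-O(\log n))N/2\ge (c/2)|E|$ for large $n$. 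This holds uniformly in $(A,B)$, since the good event does not depend on the partition. This gives the lemma with $c'=c/2$.
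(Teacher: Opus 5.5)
Your proof is correct and follows the paper's approach. You apply \Cref{three_levels} to each type $(k+2)$ subgraph using its three top edge types, take a union bound over blocks and levels, and keep only levels where $2^{n-k}2^{-2^{k-3}}$ is summably small. The differences are cosmetic. You use disjoint triples of types starting from $\log_2 n+4$, losing only $O(\log n)$ types, whereas the paper uses overlapping windows for all top levels $m\ge \frac n2+5$ and notes that these carry a constant fraction of the edges. Your opening remark that $k_0$ could be a large constant is wrong, as your own union-bound calculation shows, but your later choice $k_0=\log_2 n+4$ replaces it.
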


\begin{proof}
Let $\frac{n}{2}+5\le m\le n$ and consider the set of edges $E_{m,i}$ at levels $m-2$, $m-1$ and $m$ in graph $G^{(m,i)}$ for each $i\in\{1,2,...,2^{n-m}\}$. Using \Cref{three_levels} for each $E_{m,i}$ and taking a union bound we get that with probability $\ge1-2^{n-m}\cdot2^{-\frac14\cdot2^{m-3}}\ge1-2^{\frac{n}{2}-2^{\frac{n}{2}}}$ each $E_{m,i}$ is such that for any partition $(A,B)$ of the vertices at least $c$ proportion of the edges have both endpoints in $A$ or both endpoints in $B$.

Taking a union bound over $m$ and using that a constant proportion of the edges of $\Gn$ is at levels $\ge\frac{n}{2}+3$ we get that with probability $\ge 1-\frac{n}{2}\cdot2^{\frac{n}{2}-2^{\frac{n}{2}}}=1-o(1)$ the graph $\Gn$ has the required property.
\end{proof}

Finally, we use a result stating that the above property implies that $\trel\asymp\trelabs$.

\begin{proposition}\label{trelabsgeneral} Let $G=(V,E)$ be a graph satisfying that there is a constant $c>0$ such that for any partition of $V$ into two sets $A$ and $B$, we have $|E(A)|+|E(B)|>c|E|$. %, where for $S\subset V$, $E(S)$ is the set of edges with both endpoints in $S$.
Then a simple random walk on $G$ satisfies that $\trel\asymp\trelabs$.   
\end{proposition}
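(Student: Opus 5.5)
The plan is to show that the smallest eigenvalue $\lambda_{\min}$ of the transition matrix $P$ satisfies
\[
1+\lambda_{\min}\;\gtrsim\;\min\left\{c,\tfrac{1}{\trel}\right\}.
\]
Since $\trelabs=\max\{\trel,(1+\lambda_{\min})^{-1}\}$ and $\trel\ge\frac12$ always, this gives $\trel\le\trelabs\lesssim\trel$.

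Let $f$ be an eigenfunction for $\lambda_{\min}$, normalised so that $\|f\|_\pi^2=1$ with $\pi(x)=\deg(x)/(2|E|)$. The standard identity
\[
(1+\lambda_{\min})\|f\|_\pi^2=\frac{1}{2|E|}\sum_{\{x,y\}\in E}\bigl(f(x)+f(y)\bigr)^2
\]
will be used, and we write $\delta:=1+\lambda_{\min}$. We may assume $\delta\le\varepsilon_0\min\{1,1/\trel\}$ for a small constant $\varepsilon_0$; otherwise there is nothing to prove.

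\textbf{Step 1: $|f|$ is nearly constant.} Set $g:=|f|$. By the reverse triangle inequality, $(g(x)-g(y))^2\le(f(x)+f(y))^2$ for every edge. Hence the Dirichlet form satisfies $\mathcal{E}(g)\le\delta$ (with the convention $\mathcal{E}(g)=\frac{1}{2|E|}\sum_{\text{edges}}(g(x)-g(y))^2$). The Poincaré inequality then gives $\mathrm{Var}_\pi(g)\le\trel\,\mathcal{E}(g)\le\trel\,\delta\le\varepsilon_0$. Let $M:=\mathbb{E}_\pi g$. Since $\mathbb{E}_\pi g^2=1$, we get $M^2=1-\mathrm{Var}_\pi(g)\ge1-\varepsilon_0$, so $M\ge\frac12$. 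We also get $\sum_x\deg(x)(g(x)-M)^2\le 2|E|\varepsilon_0$.

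\textbf{Step 2: apply the non-bipartiteness hypothesis.} Take the partition $A=\{f\ge0\}$, $B=\{f<0\}$. By hypothesis there are more than $c|E|$ edges inside $A$ or inside $B$. For such an edge $f(x)$ and $f(y)$ have the same sign, so
\[
|f(x)+f(y)|=g(x)+g(y)\ge 2M-|g(x)-M|-|g(y)-M|.
\]
Call an edge \emph{bad} if $|g(x)-M|>M/2$ or $|g(y)-M|>M/2$. By Chebyshev in the degree-weighted form, the number of bad edges is at most $\sum_x\deg(x)\mathbf{1}\{|g(x)-M|>M/2\}\le 4M^{-2}\cdot2|E|\varepsilon_0\le 32\varepsilon_0|E|$. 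For $\varepsilon_0<c/64$, at least $\frac{c}{2}|E|$ same-side edges are not bad. Each of these contributes $(f(x)+f(y))^2\ge M^2\ge\frac14$. Therefore
\[
\delta\ge\frac{1}{2|E|}\cdot\frac{c}{2}|E|\cdot\frac14=\frac{c}{16}.
\]
This contradicts $\delta\le\varepsilon_0$ once $\varepsilon_0<c/16$. We conclude that $\delta\ge\varepsilon_0\min\{1,1/\trel\}$ with $\varepsilon_0=c/64$, as claimed.

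\textbf{Main obstacle.} The delicate point is Step 2. The hypothesis only controls partitions of all of $V$, not vertex subsets, so a Trevisan-type bipartiteness-ratio argument cannot be applied directly. The detour through $|f|$ and the Poincaré inequality is what forces $f$ to be essentially $\pm M$ everywhere. That in turn makes the sign partition meaningful, and the error from vertices where $|f|$ deviates is absorbed via the degree-weighted Chebyshev count above.
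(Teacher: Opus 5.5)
Your proof is correct. It takes a different, self-contained route from the paper, and it gives an explicit constant: $\trel\le\trelabs\le 128\,c^{-1}\,\trel$.

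The paper does not prove this proposition in its text. It defers to \cite{LRP}, where the claim is derived from an external spectral result on near-bipartiteness \cite{near_bipartiteness}, which relates the smallest eigenvalue to how far the graph is from bipartite, and the hypothesis is then applied to that result.

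You instead work directly with an eigenfunction $f$ for $\lambda_{\min}$:
\begin{enumerate}
\item The identity $(1+\lambda_{\min})\|f\|_\pi^2=\frac{1}{2|E|}\sum_{\{x,y\}\in E}(f(x)+f(y))^2$ expresses $1+\lambda_{\min}$ through edge sums.
\item The reverse triangle inequality transfers this bound to the Dirichlet form of $|f|$.
\item The Poincar\'e inequality then forces $|f|$ to be close to its mean $M\ge\frac12$, in degree-weighted $L^2$.
\item The sign partition $\{f\ge0\},\{f<0\}$ becomes a meaningful test partition for the hypothesis. The degree-weighted Chebyshev count absorbs the few edges where $|f|$ deviates, which yields $1+\lambda_{\min}\ge\frac{c}{64}\min\{1,1/\trel\}$.
\end{enumerate}

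In effect, you reprove from scratch the special case of the near-bipartiteness result that is needed here. The citation buys brevity and access to a more general statement. Your version buys transparency and explicit constants, using nothing beyond the variational characterisation of $I+P$ and the spectral gap.

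You are right that a Trevisan-type bipartiteness ratio, which ranges over all vertex subsets, cannot be fed the hypothesis directly, since the hypothesis only controls partitions of all of $V$. Passing to $|f|$ and using the gap is exactly what rules out locally near-bipartite pieces, so this is the correct place for $\trel$ to enter.
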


The proof of \Cref{trelabsgeneral} relies on a result from~\cite{near_bipartiteness} and it is presented in~\cite{LRP}.

%Now we are ready to conclude the proof of~\Cref{trelabs_order}

\begin{proof}[Proof of~\Cref{trelabs_order}]
The proof follows directly from Lemma~\ref{lemma:edge_inner_boundary}, Proposition~\ref{trelabsgeneral} and as from~\cite[Proposition 3]{twisted_hypercubes_structure_randomness} we know that $\trel\asymp n$.
\end{proof}

\section{The cover time}\label{sec:cover}

In this section we present the proof of \Cref{thm:main_cover}.

In what follows we consider a simple random walk. Once we prove the bounds for this, the bounds for a lazy random walk follow by noting that the cover time of a lazy walk is twice the cover time of a simple random walk.

The lower bound follows from a general lower bound from~\cite{tcov_lower_bound} which states that the cover time of a simple random walk on a graph with $N$ vertices satisfies $\tcov\ge N\log N$.

For the upper bound, we use Matthew's theorem (see~\cite[Theorem 11.2]{MTMC_book}) and a standard bound on the maximal hitting time in terms of the maximal hitting time from the invariant distribution (see~\cite[Lemma 10.2]{MTMC_book}, which give that $\tcov\le \max_{x,y\in \Vn}\estart{\tau_x}{y}\log N\le 2\max_{x\in \Vn}\estart{\tau_x}{\pi}\log N $, where $\tau_x$ denotes the hitting time of vertex $x$.

Therefore, it only remains to prove that
\begin{align}\label{eq:hit_from_pi_upper_bound}
\max_{x\in \Vn}\estart{\tau_x}{\pi}\quad\lesssim\quad N\quad=\quad2^n.
\end{align}

We know that $\estart{\tau^+_x}{x}=\frac{1}{\pi(x)}=2^n$ for all $x$, where $\tau_x^+$ denotes the first return time to $x$, and we will use this to prove an upper bound of the same order on any hitting time. 

In what follows we work on the high probability event that the mixing times of $\Gn$, $G^{(n-1,1)}$ and $G^{(n-1,2)}$ all satisfy the bounds in \Cref{thm:main_mixing}. Let us fix a vertex $x$ of $\Gn$ and for notational convenience assume that $x$ is in $G^{(n-1,1)}$.

Note that in each step a random walk has probability $\le\frac1n$ of stepping to $x$ and probability $\frac1n$ of stepping to $G^{(n-1,2)}$, hence for any positive constant $A$ there exists a positive constant $\eps_A$ such that $\prstart{\tau_x^+\ge An, \tau_{G^{(n-1,2)}}\le An}{x}\ge\eps_A$.

Let $Y$ be a simple random walk on $G^{(n-1,2)}$ and let $t:=2\tmixtext^{G^{(n-1,2)}}\left(\frac{1}{16}\right)$ be two times its $\frac{1}{16}$-mixing time. %\magenta{[Originally we had a $\frac18$-mixing time, check if that is sufficient.]}
From the assumption that $G^{(n-1,2)}$ satisfies the bounds in \Cref{thm:main_mixing} we know that $t\le Cn$ where $C$ is a positive constant. From~\cite[Lemma 6.17]{MTMC_book} we also know that at time $t$ the separation distance $s(t):=\max_{y,z\in V^{(n-1,2)}} \left(1-\frac{\prstart{Y_t=z}{y}}{\pi_{G^{(n-1,2)}}(z)}\right)$ of $Y$ satisfies $s(t)\le\frac14$. Hence we get that $\prstart{Y_t=z}{y}\ge \frac{3}{4}\pi_{G^{(n-1,2)}}(z)=\frac{3}{2N}$ for all $y$ and $z$ in $G^{(n-1,2)}$.

Let $X$ be a simple random walk on $\Gn$. Note that in each step $X$ crosses a type $n$ edge with probability $\frac1n$, regardless of its location, hence starting from $y$, we have $\tau_{G^{(n-1,1)}}\sim\Geompos{\frac1n}$ and conditional on the value of $\tau_{G^{(n-1,1)}}$, the walk $(X_k)_{k<\tau_{G^{(n-1,1)}}}$ is distributed like a simple random walk on $G^{(n-1,2)}$. Let $\theta$ be a sufficiently small constant such that $\pr{\Geompos{\frac1n}>t}\ge\theta$. Then we have
\begin{align*}
N\quad=\quad \estart{\tau_x^+}{x}\quad\ge&\quad \sum_{y,z\in V^{(n-1,2)}}\prstart{\tau_x^+\ge An, \tau_{G^{(n-1,2)}}\le An, X_{ \tau_{G^{(n-1,2)}}}=y}{x}\\
& \hspace{5cm} \cdot\prstart{X_t=z, t < \tau_{G^{(n-1,1)}}}{y}\cdot \estart{\tau_x}{z}\\
\ge&\quad \sum_{y,z\in V^{(n-1,2)}}\prstart{\tau_x^+\ge An, \tau_{G^{(n-1,2)}}\le An, X_{ \tau_{G^{(n-1,2)}}}=y}{x}\\
& \hspace{5cm} \cdot\prstart{Y_t=z}{y}\prstart{t < \tau_{G^{(n-1,\ell)}}}{y}\cdot \estart{\tau_x}{z}\\
\ge&\quad\sum_{y,z\in V^{(n-1,2)}}\prstart{\tau_x^+\ge An, \tau_{G^{(n-1,2)}}\le An, X_{ \tau_{G^{(n-1,2)}}}=y}{x}\\
& \hspace{5cm} \cdot  \frac{3}{2N}\pr{\Geompos{\frac1n}> t}\estart{\tau_x}{z}\\
\ge&\quad\sum_{z\in  V^{(n-1,2)}} \eps_A\frac{3}{2N}\theta\estart{\tau_x}{z}\quad= \quad\frac{3}{4}\eps_A\theta \estart{\tau_x}{\pi_{G^{(n-1,2)}}}.
\end{align*}

This shows that $\estart{\tau_x}{\pi_{G^{(n-1,2)}}}\le\frac{4}{3\eps_A\theta}N$.

To obtain a bound on $\estart{\tau_x}{\pi}$, note that $\prstart{X_{\tau_{G^{(n-1,2)}}}=y}{\pi_{G^{(n-1,1)}}}=\pi_{G^{(n-1,2)}}(y)$, hence

\begin{align*}
\estart{\tau_x}{\pi}\quad=\quad\frac12\estart{\tau_x}{\pi_{G^{(n-1,1)}}}+\frac12\estart{\tau_x}{\pi_{G^{(n-1,2)}}}\quad\le\quad& \estart{\tau_x}{\pi_{G^{(n-1,2)}}}+\frac12\estart{\tau_{G^{(n-1,1)}}}{\pi_{G^{(n-1,1)}}}\\
&\le\quad \frac{4}{3\eps_A\theta}N+\frac12n\quad\le\quad\left(\frac{4}{3\eps_A\theta}+1\right)N.
\end{align*}

This finishes the proof of \eqref{eq:hit_from_pi_upper_bound}, hence completing the proof of \Cref{thm:main_cover}.

\section{The chromatic number}\label{sec:chromatic}

In this section we prove the following auxiliary statement, which immediately implies \Cref{thm:main_chromatic}.

\begin{lemma}\label{aux_chromatic}
For any positive integer $c$ there exists a graph $H_c$ such that $\chi\left(H_c\right)>c$ and with high probability the graph $\Gn$ contains a copy of $H_c$.
\end{lemma}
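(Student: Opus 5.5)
Take $H_c$ to be a fixed finite graph with $\chi(H_c)>c$ and girth at least $5$ (Erd\H{o}s gives arbitrary girth and chromatic number; Mycielski graphs are triangle-free, but I would prefer girth $\ge g$ for a suitable constant $g$). Then show that whp $\Gn$ contains a copy of $H_c$. The structure of $\Gn$ allows an embedding that uses only edges of distinct types. For each edge $e$ of $H=H_c$, with $|E(H)|=r$ edges labelled $e_1,\dots,e_r$, we assign the type $k_0+j$ to $e_j$ for a suitable $k_0$. Each vertex then has at most one incident edge of each type, so the requirement is consistent with $\Gn$ having a perfect matching of each type.

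The embedding problem becomes the following. Consider the $2^{n-k_0-r}$ disjoint type $(k_0+r)$ subgraphs (or simply work inside one of them, scaled). Contract each type $k_0$ subgraph to a single ``block''. The edges of types $k_0+1,\dots,k_0+r$ then form, on the $2^{r}$ blocks of a type $(k_0+r)$ subgraph, a structure in which each type-$(k_0+j)$ layer is a union of uniform random perfect matchings between the vertex sets of paired blocks.

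I would look for a map $\phi:V(H)\to$ vertices, with prescribed block labels $b(v)\in\{0,1\}^r$. The labels are chosen so that for $e_j=uv$, $b(u)$ and $b(v)$ differ exactly in coordinate $j$ (together with coordinates forced by lower types). This is the one combinatorial point: choose labels consistent with a hypercube-like embedding. It is easier to assign each vertex $v$ the indicator vector of its incident edges. For $e_j=uv$ the labels $b(u)=\mathbf 1_{E(v)\ni\cdot}$ and $b(v)$ then differ in more than coordinate $j$, which fails. So instead I would use a first-moment/second-moment count of copies directly.

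Let $X$ be the number of injective maps $\phi$ such that each $e_j=uv$ is an edge of $\Gn$ of type exactly $t_j$, where the $t_j$ are distinct types in $[n/2,n]$ chosen freely. Fix the types, and embed vertices one at a time in a degeneracy order. Each edge demands that $\phi(v)$ equal the specific type-$t_j$ matching partner of $\phi(u)$. For a cycle-containing $H$, closing a cycle requires an independent event of probability about $2^{-t}$. Hence $\mathbb E X\approx N\cdot(\text{choices})\cdot 2^{-\sum(\text{cycle constraints})}$, which is tiny unless types are small. The \emph{main obstacle} is therefore that fixed subgraphs with cycles appear only through low-type edges.

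So I would instead put all edges in low types: use the first $r$ types, where $G^{(r)}$ is a fixed-size random graph. With positive probability $p_H>0$ (depending only on $H$), a copy $G^{(r)}$ contains $H$; for instance, require each random matching to realise a specific edge assignment of an embedding of $H$ into the hypercube-with-twists, checked by induction on $r$. The $2^{n-r}$ type-$r$ subgraphs are iid, so $\Gn$ fails to contain $H$ with probability at most $(1-p_H)^{2^{n-r}}\to0$.

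The remaining step, and the real crux, is showing $p_H>0$ for some $r$. I would do this by induction, embedding $H$ via a proper edge colouring of $H$ with colours $1,\dots,r$ arranged so that edges of colour $j$ join the two halves at level $j$. Equivalently, I need an injective vertex labelling $b:V(H)\to\{0,1\}^r$ such that for each edge of colour $j$ the labels agree above coordinate $j$ and differ at $j$. Such a labelling exists by taking $r=|V(H)|$, giving vertex $i$ a label whose highest differing bits are arranged by a binary tree on the vertices. The twisted matchings at each level then realise any desired matching with positive probability.
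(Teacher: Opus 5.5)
\textbf{There is a genuine gap in the crux: the labelling you claim does not exist for a general $H$.}

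Your closing reduction is fine and matches how the paper finishes. If some realisation of $G^{(r)}$ contains $H$, then $p_H>0$, because the uniform matchings realise any consistent partial matching with positive probability. The $2^{n-r}$ iid type-$r$ subgraphs then give failure probability at most $(1-p_H)^{2^{n-r}}\to0$.

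The problem is the step before that. You take $H_c$ to be an arbitrary graph with $\chi(H_c)>c$ and girth at least $5$, and assert that a labelling $b:V(H)\to\{0,1\}^r$ with a compatible proper edge colouring exists via ``a binary tree on the vertices''. Given $b$, the type of an edge $uv$ is forced to be the highest coordinate in which $b(u)$ and $b(v)$ differ. Since each type is a perfect matching, at every node of the binary hierarchy the edges of $H$ crossing between the two halves must form a matching.

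This constraint is quantitative. If $S=S_1\sqcup S_2$ is such a split, then $e(H[S])\le e(H[S_1])+e(H[S_2])+\min(|S_1|,|S_2|)$. Induction, using that the binary entropy satisfies $h(x)\ge 2x$ on $[0,\frac12]$, shows that every subgraph on $v$ vertices of every realisation of a twisted hypercube has at most $\frac12 v\log_2 v$ edges. Consequently the following graphs are never subgraphs of any $G^{(r)}$:
\begin{itemize}
\item the Gr\"otzsch graph, with $11$ vertices and $20>\frac{11}{2}\log_2 11\approx 19.03$ edges;
\item the Hoffman--Singleton graph, with girth $5$, $50$ vertices and $175>25\log_2 50\approx 141$ edges;
\item the graphs from Erd\H{o}s's probabilistic construction, whose average degree is polynomial in the number of vertices.
\end{itemize}
For these, $p_H=0$ whatever $r$ you take. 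Girth is not the relevant property; what matters is compatibility with the nested matching-cut structure.

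The missing idea is to construct $H_c$ specifically so that it has this structure. The paper does this by induction on $k$. The graph $H_{k+1}$ consists of an independent set $A$ of $kM_k$ vertices, together with one copy of $H_k$ for each $M_k$-subset $A^{(i)}\subseteq A$, joined to $A^{(i)}$ by a perfect matching. In any proper $(k+1)$-colouring, pigeonhole makes some $A^{(i)}$ monochromatic. This forces a proper $k$-colouring of the corresponding copy of $H_k$, so $\chi(H_{k+1})>k+1$.

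For the embedding, $A$ is placed in $G^{(n_k,1)}$. The $i$th copy of $H_k$ is placed in the type $(n_k+i)$ subgraph that is the sibling of the one containing $A$. Each joining matching then consists of edges of a single type, which the corresponding uniform matching realises with positive probability. Your iid-copies argument then finishes the proof.
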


We prove \Cref{aux_chromatic} by induction on $c$.

For $c=1$ the statement holds with $H_1$ consisting of two vertices connected via an edge.

Assume that the statement holds for $c=k$ and let us construct $H_{k+1}$ as follows.

Let $M_k$ be the number of vertices of $H_k$. Let $A$ be a set of $kM_k$ vertices and let $H_k^{(1)}$, ..., $H_k^{\left(\binom{kM_k}{M_k}\right)}$ be $\binom{kM_k}{M_k}$ disjoint copies of $H_k$. Let $A^{(1)}$, ..., $A^{\left(\binom{kM_k}{M_k}\right)}$ be an enumeration of all size $M_k$ subsets of $A$, and let $H_{k+1}$ be a graph on $kM_k+\binom{kM_k}{M_k}$ vertices obtained by considering $A$ and all $H^{(i)}$, and for each $i$ adding $M_k$ edges according to a perfect matching between the $M_k$ vertices in $A^{(i)}$ and the $M_k$ vertices in $H^{(i)}$.

Firstly, we show that $\chi\left(H_{k+1}\right)>k+1$. Assume for contradiction that there is a proper colouring of $H_{k+1}$ with $(k+1)$ colours. Then by the pigeonhole principle there is an $i$ such that all $M_k$ vertices in $A^{(i)}$ are of the same colour. \footnote{We use that $M_k\ge k+1$, hence $kM_k\ge(k+1)(M_k-1)+1$.} Since each vertex in $H_k^{(i)}$ is connected to a vertex in $A^{(i)}$, this colour cannot appear in the vertices of $H_k^{(i)}$. This means that we have a proper colouring of $H_k^{(i)}$ with $k$ colours, which contradicts $\chi\left(H_k\right)>k$. So we must have $\chi\left(H_{k+1}\right)>k+1$.

Now we turn to showing that whp $\Gn$ contains a copy of $H_{k+1}$.

We know that whp $\Gn$ contains a copy of $H_k$, hence there is a positive integer $n_k$ such that $\pr{G^{(n_k)}\text{ contains a copy of }H_k}\ge\frac12$. Let $m=n_k+\binom{kM_k}{M_k}$. We will show that\\ $\pr{G^{(m)}\text{ contains a copy of }H_{k+1}}\ge2^{-(mkM_k+1)\binom{kM_k}{M_k}}>0$. Once we have this, using that the number of independent copies of $G^{(m)}$ in $G^{(n)}$ diverges as $n\to\infty$, we can conclude that $\Gn$ contains $H_{k+1}$ with high probability.

We can prove the lower bound on $\pr{G^{(m)}\text{ contains a copy of }H_{k+1}}$ as follows.

Let $A$ be a fixed set of size $kM_k$ in $G^{(n_k,1)}$ and let $A^{(0)}$, ..., $A^{\left(\binom{kM_k}{M_k}-1\right)}$ be an enumeration of its size $M_k$ subsets. Note that for each $i$ we have $\pr{G^{(n_k+i,2)}\text{ contains a copy of }H_k}\ge\frac12$, since $G^{(n_k+i,2)}$ contains copies of $G^{(n_k)}$.

On the event that $G^{(n_k+i,2)}$ contains a copy $H_k^{(i)}$ of $H_k$ for each $i\in\left\{0,1...,\binom{kM_k}{M_k}\right\}$, the probability that the vertices of each $H_k^{(i)}$ are matched to the vertices of the corresponding $A^{(i)}$ is $\prod_{i=0}^{\binom{kM_k}{M_k}-1}\binom{2^{n_k+i}}{kM_k}^{-1}\ge2^{-mkM_k\binom{kM_k}{M_k}}$. We used here that for each $i$ there is a perfect matching between the vertices of $G^{(n_c+i,2)}$ and the vertices of $G^{(n_c+i,1)}$, the latter graph contains $G^{(n_k,1)}$ containing $A$, and these matchings are independent for different $i$.

This finishes the proof that $H_{k+1}$ has the desired properties, hence finishing the proof of \Cref{aux_chromatic}.

\begin{remark}
The above proof, if carried out with a bit more care, can provide a quantitative lower bound on the growth of the chromatic number as $n\to\infty$, however since this bound would be extremely weak, we decided not to work out its details. It remains an open question to find a sensible lower bound on $\chi\left(\Gn\right)$ as a function of $n$ that holds with high probability.
\end{remark}

\subsection*{Acknowledgements}

We would like to thank Itai Benjamini and Renan Gross for drawing our attention to this model. We would also like to thank Jonathan Hermon, Allan Sly and Perla Sousi for the very helpful discussions.

Zsuzsanna Baran was supported by DPMMS EPSRC DTP and by the ERC consolidator grant CUTOFF (101123174). Views and opinions expressed are however those of the authors only and do not necessarily reflect those of the European Union or the European Research Council Executive Agency. %\blue{[Who asked about the cover time?]}

\bibliography{bib_twisted_hypercube.bib}
\bibliographystyle{plain}

\end{document}